\documentclass[a4paper,11pt,twoside]{article}
\usepackage{amsmath}
\usepackage[utf8]{inputenc}
\usepackage[T1]{fontenc}
\usepackage{natbib}
 \setlength{\bibsep}{0.0pt}
\usepackage{hyperref}
 \hypersetup{colorlinks, citecolor=blue, filecolor=blue, linkcolor=blue, urlcolor=blue }
\usepackage{titlesec}
\titleformat{\section}{\bfseries\sffamily}{\thesection.}{10pt}{\bfseries\sffamily}
\titleformat{\subsection}{\sffamily}{\thesubsection.}{10pt}{\sffamily}
\titleformat{\subsubsection}{\it}{\thesubsubsection.}{10pt}{\it}

\usepackage{amsmath}
\usepackage{amssymb}
\usepackage{xspace}
\usepackage{newtxtext}
\usepackage{newtxmath}
\usepackage{amssymb}
\usepackage{amsfonts}
\usepackage{graphicx}
\usepackage[font+=scriptsize,skip=0ex]{subcaption}

\usepackage{amsthm}
\usepackage{mathtools}
\usepackage{siunitx}
\usepackage{dsfont}
\usepackage{etoolbox,refcount}
\usepackage{multicol}
\usepackage{booktabs,multirow}
\usepackage{enumerate}
\usepackage[inline]{enumitem}
\usepackage{setspace}
\usepackage{centernot}
\usepackage{xfrac}
\usepackage{multirow}
\usepackage{caption}
\usepackage{algorithm}
\usepackage{algpseudocode}
 \captionsetup[algorithm]{font=footnotesize}
\usepackage{amsopn}
\usepackage{longtable}
\usepackage{supertabular}
\usepackage[titletoc,title]{appendix}
\usepackage{abstract}
%
\usepackage{extarrows}
\usepackage[all]{xy}
\input xy
\xyoption{all}
\usepackage{tikz}
\usepackage{pgfplots}
\usepackage{environ}
\usepackage{tikz-3dplot}
\usetikzlibrary{arrows}
\usetikzlibrary{3d}
\tikzset{>=latex}
\usepackage{fancyvrb}
\usetikzlibrary{hobby,decorations.pathreplacing}
\usepackage{listings}
\makeatletter
\usepackage[bottom]{footmisc}

\makeatother
\pgfplotsset{grid style={gray}}
\pgfplotsset{minor grid style={ultra thin, loosely dashed , gray}}
\pgfplotsset{major grid style={ultra thin, gray}}
\makeatletter
\newsavebox{\measure@tikzpicture}
\NewEnviron{scaletikzpicturetowidth}[1]{%
  \def\tikz@width{#1}%
  \def\tikzscale{1}\begin{lrbox}{\measure@tikzpicture}%
  \BODY
  \end{lrbox}%
  \pgfmathparse{#1/\wd\measure@tikzpicture}%
  \edef\tikzscale{\pgfmathresult}%
  \BODY
}
\makeatother
\makeatletter
\renewcommand{\maketitle}{\bgroup\setlength{\parindent}{0pt}
\begin{flushleft}
  \hspace{27.5pt}\textbf{\@title}

  \hspace{27.5pt}\@author
\end{flushleft}\egroup
}
\makeatother
\usepackage{cleveref}
\crefname{section}{section}{Sections}
\crefname{subsection}{subsection}{Subsections}
\crefname{subsubsection}{subsubsection}{Subsubsections}
\crefname{theorem}{theorem}{Theorems}
\crefname{corollary}{corollary}{Corollaries}
\crefname{lemma}{lemma}{Lemmas}
\crefname{proposition}{proposition}{Propositions}
\crefname{definition}{definition}{Definitions}
\crefname{remark}{remark}{Remarks}
\crefname{table}{table}{Table}
\crefname{figure}{figure}{Figures}
\crefname{equation}{equation}{Equations}
\crefname{appsec}{Appendix}{Appendices}
\DeclareMathAlphabet{\mathpzc}{OT1}{pzc}{m}{it}

\newcommand{\Ss}{\mathpzc{S}}
\newcommand{\Hh}{\mathpzc{H}}
\newcommand{\X}{\mathpzc{X}}
\numberwithin{equation}{section}
\theoremstyle{plain}
\newtheorem{theorem}{Theorem}[section]

\newtheorem{proposition}{Proposition}[section]
\theoremstyle{definition}
\newtheorem{definition}{\emph{Definition}}[section]
\theoremstyle{remark}
\newtheorem{remark}{Remark}[section]
\newtheorem{example}{Example}[section]

%
%
\usepackage{chngcntr}
 \counterwithin{figure}{section}
 \counterwithin{table}{section}
 \counterwithin{algorithm}{section}
%
\usepackage{marginnote}
\usepackage[marginparwidth=2.8cm, marginparsep=.2cm]{geometry}
%
%
\renewcommand{\Pr}{\operatorname{\mathds{P}}}

\newcommand{\RR}{\mathds{R}}

\newcommand{\bbG}{\mathbb{G}}
\newcommand{\bbF}{\mathbb{F}}
\newcommand{\bbK}{\mathbb{K}}       \newcommand{\wbbK}{\w{\mathbb{K}}}

\newcommand{\bbP}{\mathbb{P}}
\newcommand{\SSS}{\mathds{S}}
\newcommand{\one}[1]{\mathds{1}_{\{#1\}}}

\newcommand{\E}{\operatorname{\mathds{E}}}

\newcommand{\diag}{\operatorname{diag}}

\newcommand{\CA}{\mathcal{A}}
\newcommand{\CCC}{\mathcal{C}}

\newcommand{\CX}{\mathcal{X}}

\newcommand{\CD}{\mathcal{D}}
\newcommand{\CN}{\mathcal{N}}
\newcommand{\CP}{\mathcal{P}}
\newcommand{\CR}{\mathcal{R}}

\newcommand{\ROC}{ROC\xspace}
\newcommand{\dHSIC}{dHSIC\xspace}

\newcommand{\AUK}{\mathrm{AUK}}
\newcommand{\hAUK}{\h{\AUK}}
\newcommand{\wAUK}{\w{\AUK}}
\newcommand{\Frank}{\mathrm{Frank}}
\newcommand{\Clayton}{\mathrm{Clayton}}
\newcommand{\Joe}{\mathrm{Joe}}
\newcommand{\FGM}{\textrm{F-G-M}}
\newcommand{\Morgenstern}{\textrm{Morgenstern}}
\newcommand{\Plackett}{\textrm{Plackett}}
\newcommand{\ALIHAQ}{\textrm{ALI-HAQ's}}
\newcommand{\Gumbel}{\textrm{Gumbel}}

\newcommand{\hT}{\h{T}}

\newcommand{\hI}{\h{I}}
\newcommand{\hxi}{\h{\xi}}
\newcommand{\wC}{\w{C}}
\newcommand{\wT}{\w{T}}               \newcommand{\cT}{\check{T}}
\newcommand{\bx}{\bm{x}}              \newcommand{\bX}{\bm{X}}              \newcommand{\wbX}{\w{\bm{X}}}              \newcommand{\cbX}{\check{\bm{X}}}
\newcommand{\by}{\bm{y}}              \newcommand{\bY}{\bm{Y}}
\newcommand{\bc}{\bm{c}}              \newcommand{\bC}{\bm{C}}
              \newcommand{\bD}{\bm{D}}
\newcommand{\bs}{\bm{s}}              
\newcommand{\bu}{\bm{u}}              \newcommand{\bU}{\bm{U}}
     \newcommand{\bDelta}{\bm{\varDelta}}
\newcommand{\brho}{\bm{\rho}}

\newcommand{\bZ}{\bm{Z}}

\newcommand{\bzero}{\mathbf{0}}
\newcommand{\Erlang}{\mathrm{Erlang}}
\newcommand{\Fa}{F_{\mathbf{1}}}
\newcommand{\Fb}{F_{\mathbf{2}}}

\newcommand{\vP}{\varPi}
\newcommand{\as}{\xlongrightarrow{\textrm{\begin{picture}(0,0)\put(0,0){\makebox(-1.75,-.5){$\mathrm{as}$}}\end{picture}}}}

\newcommand{\eqd}{\xlongequal{\textrm{\begin{picture}(2,0)\put(1,0){\makebox(0,1.5){$\mathrm{d}$}}\end{picture}}}}
\newcommand{\SIG}{\mathrel{\mathpalette\SIS\relax}}
  \newcommand{\SIS}[2]{%
    \ooalign{$#1\mathbf{\Sigma}$\cr\hfil$#1|$\hfil\cr}}
\newcommand{\Sig}{{\SIG}}
\newcommand{\h}[1]{\hat{#1}}
\newcommand{\w}[1]{\tilde{#1}}
\newcommand{\bm}[1]{\boldsymbol{#1}}
\newcommand{\ud}[1]{\operatorname{d\textrm{$#1$}}}
\newcommand{\FIG}[2]{#2}
\definecolor{rlogo}{RGB}{126,142,176}
\definecolor{Blue}{rgb}{0,0,.6}
\definecolor{Red}{rgb}{0.8,0,0}
\definecolor{Green}{rgb}{0,.4,.4}
\let\originalleft\left
\let\originalright\right
\renewcommand{\left}{\mathopen{}\mathclose\bgroup\originalleft}
\renewcommand{\right}{\aftergroup\egroup\originalright}
\newcommand\blfootnote[1]{%
  \begingroup
  \renewcommand\thefootnote{}\footnote{#1}%
  \addtocounter{footnote}{-1}%
  \endgroup}
\setlength{\textwidth}{38.5em} \setlength{\oddsidemargin}{1.35em}\setlength{\evensidemargin}{1.35em}
\setlength{\parskip}{.2ex} \setlength{\textheight}{125ex}
\setlength{\topmargin}{12ex}
\renewcommand*{\thefootnote}{\fnsymbol{footnote}}
 \title{\begin{center} 
 		\begin{minipage}[a]{.9\linewidth}
        \begin{center}
        	\Large\bf An AUK-based index for measuring and testing the joint dependence of a random vector	
        \end{center}
        \end{minipage}
        \end{center}}
 \author{
 \begin{itemize}[labelindent=0pt, topsep=5pt, parsep=3pt, partopsep=0pt]
   \item[]\large Georgios Afendras\footnote{Corresponding author}
                                  \blfootnote{E-mail addresses: \href{mailto:gafendra@math.auth.gr}{\tt gafendra@math.auth.gr} (Georgios Afendras);
                                                                \href{mailto:markatou@buffalo.edu}{\tt markatou@buffalo.edu} (Marianthi Markatou);
                                                                \href{mailto:avexler@buffalo.edu}{\tt avexler@buffalo.edu} (Albert Vexler)} \\
          \small\it Department of Mathematics, Aristotle University of Thessaloniki, Thessaloniki, Greece
   \item[]\large\rm Marianthi Markatou\\
          \small\it Department of Biostatistics and Jacobs School of Medicine and Biomedical Sciences, The \linebreak State University of New York at Buffalo, Buffalo, NY, USA\\
   \item[]\large\rm Albert Vexler\\
          \small\it Department of Biostatistics, The State University of New York at Buffalo, Buffalo, NY, USA
 \end{itemize}
 }
 \date{}

\newcommand{\runtitle}{An AUK-based index for measuring and testing the joint dependence of a random vector}
\newcommand{\authors}{G.~Afendras, M.~Markatou and A.~Vexler}

\usepackage{fancyhdr}
\pagestyle{fancy}

\fancyhf{}
\fancyhead[RO]{\footnotesize\sffamily\runtitle\quad\thepage}
\fancyhead[LE]{\footnotesize\sffamily\thepage\quad\authors}
\fancypagestyle{firstpage}{%
  \lhead{}
  \rhead{}
}

 \begin{document}
\maketitle
\thispagestyle{firstpage}
 \begin{abstract}
\noindent\makebox[\linewidth]{\rule{\linewidth}{0.4pt}}
 \noindent
 {\bf Abstract.}
We present an index of dependence that allows one to measure the joint or mutual dependence of a $d$-dimensional random vector with $d>2$. The index is based on a $d$-dimensional Kendall process. We further propose a standardized version of our index of dependence that is easy to interpret, and provide an algorithm for its computation. We discuss tests of total independence based on consistent estimates of the area under the Kendall curve. We evaluate the performance of our procedures via simulation, and apply our methods to a real data set.
\medskip\newline
{\it MSC 2010 subject classifications}: \scalebox{.97}{Primary 62H20, 62H05, 62E10; secondary 62-09, 62G99.}
\medskip\newline
{\it Key words and phrases}: Copulas; Dependence axioms; Index of dependence; Kendall-function/plot; Measure of association/dependence; Structure theory; Testing complete independence.
\vspace*{-2ex}

\noindent\makebox[\linewidth]{\rule{\linewidth}{0.4pt}}
\end{abstract}

\section{Introduction and Literature Review}
\label{sec.intr}

Advances in technology allow the generation and collection of very large data sets, where the dependencies among the different variables that comprise these data sets are often of considerable interest. In this paper, we consider ways to measure joint or mutual dependence among the components of a $d$-dimensional random vector and to construct test procedures for mutual or total independence.

There is substantial literature on the construction of indices of dependence between two random variables. These include the relevant concept of correlation, for example \citet{Pearson1895} and \citeauthor{Spearman1904}'s rho (\citeyear{Spearman1904}). More recently, \citet{SzRB2007} and \citet{SzR2009,SzR2014} proposed the concept of distance correlation, that represents one of the most important breakthroughs in measuring dependence between two random vectors. Furthermore, a generalization of the concept of distance correlation was recently proposed by \citet*{SPV2020}. Other proposals include a projection correlation \citep{ZXLZ2017,BD2014,GNRM2019}.

We propose an index of dependence that is based on the area under the Kendall curve, generalizing work by \citet{VAM2019} from the two-dimensional to the $d$-dimensional case. The proposed index measures the dependence between the $d$ components of a random vector. We then construct a test of total independence based on the estimated area under the Kendall curve. This task is different from the task of testing pairwise independence in a collection of random variables.

The literature in statistics includes a few nonparametric procedures to test mutual or joint dependence. These include \citet{Simon1977} who proposed a nonparametric test based on Kendall's tau. Other tests include \citet{Kankainen1995} and \citet{Karvanen2005}. The later author developed a restamping test for testing total independence that is based on the test proposed by \citeauthor{Kankainen1995}, for the case of stationary time series.

\citet{PBSP2018} investigated also the problem of testing mutual or joint independence and proposed tests based on the $d$-variable Hilbert-Schmidt independence criterion, \dHSIC, thereby extending \citet{Kankainen1995} to the flexible framework of kernel methods. These authors developed their test based on the idea of probability embeddings of marginals and the joint distribution of the random vector $\bX$.

Our contribution is two-fold. First, we propose n algorithm to construct an index of dependence that is based on the Euclidean distance between the vector of appropriately defined areas under the Kendall curve and the $d$-dimensional vector with components equal to $1/2$. We also present the standardized version of our index of dependence that is easily interpretable, and we note that these indices are copula-based.

Next, we propose a practical, fast to compute test of mutual independence that is also based on the estimate of the area under the Kendall curve. We discuss the asymptotic distribution of our test statistic under the null hypothesis of mutual independence and investigate the performance characteristics of our test via simulation. Furthermore, we provide comparisons with the \dHSIC test. The results of our study show that the proposed testing procedure is practical and easily computable --- the computation time required to carry out our test is considerably less than the time required to compute the corresponding \dHSIC test. Furthermore, the newly proposed test outperforms in terms of power the \dHSIC test in smaller samples and performs as the \dHSIC test in large samples. The power results apply to all distributions from where samples were generated with the exception of the distributions $U\{C(0,1)\}$ and spiral 1 and 3 distributions.

The paper is organized as follows. Section \ref{sec.motiv} offers motivation for constructing the index of dependence and presents discussion on appropriate axioms that a measure of dependence must satisfy. Section \ref{sec.bivariate} briefly presents the bivariate case, while Section \ref{sec.extension} extends the index to the case where the dimension $d$ is greater than 2. This section also offers an algorithm for the computation of a standardized version of our index. Section \ref{sec.axioms} proves that the proposed index satisfies the axioms a function must satisfy in order to be a measure of dependence, while Section \ref{sec.empirical} discuss the estimation of the area under the Kendal curve and the two indices of dependence. Section \ref{sec.test} introduces the test of mutual independence, while Sections \ref{sec.Simulation} and \ref{sec.RealData} present simulation results and a real data example. Section \ref{sec.Discussion} concludes with a short summary and discussion of the results. Proofs of all relevant theorems are given in the appendix.

\section{Motivation, Notation and Dependence Axioms}
\label{sec.motiv}

Let $X_1,\ldots,X_d$ be a set of $d$ unidimensional random variables (rvs). Let $d_1$ and $d_2$ be two positive integers such that $d_1+d_2=d$; consider a partitioning of $X$s into two sets of dimensions $d_1$ and $d_2$, say $X_1,\ldots,X_{d_1}$ and $X_{d_1+1},\ldots,X_{d}$.  Consider the random vectors $\bX_1=(X_1,\ldots,X_{d_1})^T$, $\bX_2=(X_{d_1+1},\ldots,X_{d})^T$ and $\bX=(\bX_1^T,\bX_2^T)^T=(X_1,\ldots,X_d)^T$ of dimension $d_1$, $d_2$ and $d$ respectively. The cumulative distribution functions (cdfs) of $X_j$, $\bX_1$, $\bX_2$ and $\bX$ are denoted by $F_j$, $\Fa$, $\Fb$ and $F$ respectively. Hereafter, we denote $\bx_1=(x_1,\ldots,x_{d_1})^T\in\RR^{d_1}$, $\bx_2=(x_{d_1+1},\ldots,x_{d})^T\in\RR^{d_2}$ and $\bx=(x_1,\ldots,x_d)^T\in\RR^{d}$ being the corresponding realizations of $\bX_1$, $\bX_2$ and $\bX$. Also, whenever we say that $\bX$ (or $F$) is a continuous random vector (or cdf) on $\RR^d$ we mean that it has continuous marginal distributions --- that do not necessarily have a density function. Hence, we define the next class of random vectors
\[
\CX^d_0\doteq\{\bX\colon \bX\textrm{ is a continuous random vector on } \RR^d\}.
\]
Suppose $\bX\in\CX^d_0$ and assume that we are interested in measuring the following two kinds of dependence, measured as distances between appropriate cdfs:
\begin{enumerate}[topsep=0ex, itemsep=0ex, labelindent=0pt, leftmargin=6ex, label=$\CD_{\arabic*}\colon$, ref=\textrm{\textcolor{black}{$\CD_{\arabic*}$}}]
 \item
 \label{D1}
 measuring the distance between $F(\bx)$ and $\Fa(\bx_1)\Fb(\bx_2)$;
 \item
 \label{D2}
 measuring the distance between $F(\bx)$ and $\prod_{j=1}^{d}F_j(x_j)$.
\end{enumerate}
In other words, in \ref{D1} we are interested in measuring the pairwise-dependence between $\bX_1$ and $\bX_2$; while in \ref{D2} we are interested in measuring the dependence structure of the joint distribution function of $\bX$. Thus, in \ref{D2} we aim to measure joint or mutual dependence. In the bivariate case, i.e.\ $d=2$ and so $d_1=d_2=1$, the statements \ref{D1} and \ref{D2} are identical; while these differ from one another when $d>2$.

We focus in the multivariate case where $d>2$. Consider the rvs $U_j$, $j=1,\ldots,d$, independent and identically distributed from $U(0,1)$ such that $\bX$ and $U_j$s are independent, and define the rvs
\begin{equation}\label{eq.T,Pi,E}
T\doteq F(\bX);
\quad
\vP\doteq U_1\cdots U_d;
\quad
E\doteq-\ln\vP.
\end{equation}

Let the components of a random vector $\bX$ be standardized (each marginal distribution has zero mean and variance one), and denote its variance/covariance matrix by $\Sig_d(\brho)$ with $\brho=(\rho_{ij})\in\RR^{n(n-1)/2}$, $1\le i<j\le d$, that is
\begin{subequations}\label{eq.Sig}
\begin{equation}\label{eq.Sig(a)}
\Sig_d(\brho)=(\sigma_{ij})_{d\times d} \ \textrm{ with } \sigma_{ii}=1 \textrm{ and } \sigma_{ij}=\sigma_{ji}=\rho_{ij} \textrm{ for } i<j.
\end{equation}
Specifically, for the cases where $\rho_{ij}=\rho$ for all $i<j$, we denote the variance/covariance matrix by $\Sig_d(\rho)$, that is,
\begin{equation}\label{eq.Sig(b)}
\Sig_d(\rho)=(\sigma_{ij})_{d\times d} \ \textrm{ with } \sigma_{ii}=1 \textrm{ and } \sigma_{ij}=\rho \textrm{ for } i\ne j.
\end{equation}
\end{subequations}

In \citeyear{Renyi1959}, \citeauthor{Renyi1959}'s fundamental article defined a set of axioms that a measure of dependence for a pair of random variables must satisfy; these axioms have been studied and suitably modified by \citet{SW1981}, see Axioms \ref{R1} to \ref{R8} in Appendix \ref{app:axioms}. In a recent article, \citet{MSz2019} propose four natural axioms for dependence measures, see Axioms \ref{MS1} to \ref{MS4} in Appendix \ref{app:axioms}.  \citeauthor{Renyi1959}'s axioms refer only to the 2-dimensional case; while those of \citeauthor{MSz2019} only refer to the case of the statement in \ref{D1}. Similar to \citet{MSz2019}, \citet*{HOPZ2019} establish axioms for dependence measures, that also, in general, refer to the statement in \ref{D1}. We discuss now a modification of \citeauthor{Renyi1959}'s axioms that is suitable for \ref{D2}.

Let $\X^d\subset\CX^d_0$. According to \citeauthor{Renyi1959}, a mapping $\delta\colon\X^d\to[0,1]$ is called a mutual dependence measure on $\X^d$ if
\begin{enumerate}[topsep=0ex, itemsep=0ex, labelindent=0pt, leftmargin=6ex, label=$A_{\arabic*}\colon$, ref=\textrm{\textcolor{black}{$A_{\arabic*}$}}]
 \item
 \label{A1}
 $\delta(\bX)$ is defined for any $\bX\in\X^d$.

 \item
 \label{A2}
 $\delta(\bX)=\delta(\bX^*)$ for any permutation $\bX^*=(X_{i_1},\ldots,X_{i_d})^T$ of $\bX\in\X^d$.

 \item
 \label{A3}
 $0\le\delta(\bX)\le1$ for any $\bX\in\X^d$.

 \item
 \label{A4}
 $\delta(\bX)=0$ if and only if $X_1,\ldots,X_d$ are independent rvs.

 \item
 \label{A5}
 For $\bX\in\X^d$, $\delta(\bX)=1$ if and only if each of the $X_i$s is a.s.\ a strictly monotone function of any other.

 \item
 \label{A6}
 For $\bX\in\X^d$, if $f_i$ is strictly monotone function a.s.\ on the range of $X_i$ for all $i=1,\ldots,d$, then $\delta\{f(\bX)\}=\delta(\bX)$, where $f(\bX)=(f_1(X_1),\ldots,f_d(X_d))^T\in\X^d$.

 \item
 \label{A7}
  If $d=2$ and the joint distribution of $\bX$ is bivariate normal, with correlation coefficient $\rho$, then $\delta(\bX)$ is a strictly increasing function of $|\rho|$.

 \item
 \label{A8}
 If $\bX$ and $\bX_n$, $n = 1, 2,\ldots$, belong to $\X^d$, and if $\bX_n\rightsquigarrow\bX$, then $\delta(\bX_n)\to\delta(\bX)$.
 \end{enumerate}

\begin{remark}
\label{rem.A7}
\begin{enumerate}[topsep=0ex, itemsep=0ex, wide, labelwidth=!, labelindent=0pt, label=\rm(\alph*), ref=\textrm{\textcolor{black}{\alph*}}]
\item\label{rem.A7(a)}
For the case $d=3$, consider the matrix given by \eqref{eq.Sig(b)}, i.e.\ $\Sig_3(\rho)=\left(\textrm{\tiny\begin{tabular}{@{}c@{~~}c@{~~}c@{}}1&$\rho$&$\rho$\\[-.3ex]$\rho$&1&$\rho$\\[-.3ex]$\rho$&$\rho$&1\end{tabular}}\right)$. The determinant of this matrix is $\det\left\{\Sig_3(\rho)\right\}=(1-\rho)^2(1+2\rho)$. Since a variance/covariance matrix is a positive definite matrix and so its determinant is non-negative, it follows that $\rho\ge-1/2$. Consequently, $\Sig_3(\rho)$ is a variance/covariance matrix if and only if $-1/2\le\rho\le1$. Based on the above analysis, there are positive values of $\rho$ for which the fact that $\Sig_3(\rho)$ can be assumed as a variance/covariance matrix of a 3-dimensional random vector does not imply that $\Sig_3(-\rho)$ can also be assumed as a variance/covariance matrix. For example, set $\rho=0.7$; then, $\Sig_3(0.7)$ is a positive definete $3\times3$ matrix and hence we can consider the normal distribution $\CN_3(\bzero,\Sig_d(0.7))$, while $\Sig_3(-0.7)$ is not a positive definete matrix and so the notation $\CN_3(\bzero,\Sig_d(-0.7))$ makes no sense.

In general, the determinant of $\Sig_d(\rho)$, given by \eqref{eq.Sig(b)}, is $\det\left\{\Sig_d(\rho)\right\}=(1-\rho)^{d-1}\{1+\rho(d-1)\}$. Therefore, this matrix is a variance/covariance matrix if and only if $(1-d)^{-1}\le\rho\le1$. Let $\bX\sim \CN_d(\bzero,\Sig_d(\rho))$; then, the acceptable values of $\rho$ are $(1-d)^{-1}\le\rho\le1$. If Axiom \ref{R7} can be modified for general $d$, then a measure of dependence will take the same value for both $\CN_d(\bzero,\Sig_d(\rho))$ and $\CN_d(\bzero,\Sig_d(-\rho))$ distributions for all $\rho$-values; on the other hand, for $d>2$ and each $(d-1)^{-1}<\rho\le1$ the notation $\CN_d(\bzero,\Sig_d(\rho))$ denotes the $d$-dimensional normal distribution with mean vector $\bzero$ and variance/covariance matrix $\Sig_d(\rho)$, while the notation $\CN_d(\bzero,\Sig_d(-\rho))$ does not correspond to any distribution.

\item\label{rem.A7(b)}
One may incorrectly suggest an extension of \ref{R7} as follows. For a $d$-dimensional normal distribution for fixed $\rho_{ij}$ with $(i,j)\ne(i_0,j_0)$, a measure of mutual dependence must be an increasing function of $|\rho_{i_0j_0}|$. To explain why the aforementioned statement is incorrect, we consider the 3-dimensional normal distributions $F\sim \CN_3(\bzero,\Sig_F)$ and $G\sim \CN_3(\bzero,\Sig_G)$, where the variance/covariance matrices are
$\Sig_F= \left(\textrm{\tiny\begin{tabular}{@{}c@{~~}c@{~~}c@{}}1&$-0.5$&$-0.5$\\[-.3ex]$-0.5$&1&$\phantom{-}0.5$\\[-.3ex]$-0.5$&$\phantom{-}0.5$&1\end{tabular}}\right)$ and $\Sig_G=\left(\textrm{\tiny\begin{tabular}{@{}c@{~~}c@{~~}c@{}}1&$-0.5$&$-0.5$\\[-.3ex]$-0.5$&1&$-0.5$\\[-.3ex]$-0.5$&$-0.5$&1\end{tabular}}\right)$. Suppose $\bX\sim G$. Because of $\det(\Sig_G)=0$, there is constant vector $\bc\in\RR^3$ such that $\bc^T\bX$ is a constant with probability one, that is, $\bX$ lies on a subset of $\RR^3$ of dimension 2. The support of $F$ is $S_{F}=\RR^3$ while the support of $G$ is $S_{G}\subsetneq\RR^3$ with $\dim(S_{G})=2$. Figures \ref{sfig.normal-5-55} and \ref{sfig.normal-5-5-5} present scatterplots based on random samples of size $1000$ from $F$ and $G$ respectively. It is clear that $G$ has a stronger dependence structure than $F$, and so it is an abnormal situation a measure of dependence to have the same value for $F$ and $G$.

\item\label{rem.A7(c)}
In view of \eqref{rem.A7(a)} and \eqref{rem.A7(b)}, such modification of Axiom \ref{R7} makes no sense for dimension $d>2$.
\end{enumerate}
\end{remark}

On the other hand, according to the definition by \citeauthor{MSz2019}, a mapping $\delta\colon\X^d\to[0,1]$ is called a structure dependence measure on $\X^d$ if
\begin{enumerate}[topsep=0ex, itemsep=0ex, labelindent=0pt, leftmargin=6ex, label=$A'_{\arabic*}\colon$, ref=\textrm{\textcolor{black}{$A'_{\arabic*}$}}]
 \item
 \label{B1}
 $\delta(\bX)=0$ if and only if $X_1,\ldots,X_n$ are independent rvs.

 \item
 \label{B2}
 $\delta(\bX)$ is invariant with respect to all similarity transformations of $\RR^d$; that is, $\delta(L\bX)=\delta(\bX)$ where $L$ is similarity transformation of $\RR^d$.

 \item
 \label{B3}
 $\delta(\bX)=1$ if and only if there is a similarity transformation $L$ such that $L\bX=(X_L,\ldots,X_L)^T\in\X^d$ with probability 1.

 \item
 \label{B4}
 $\delta(\cdot)$ is continuous; that is, if $\bX$ and $\bX_n$, $n = 1, 2,\ldots$, belong to $\X^d$ such that $\bX_n$ is uniform integrable and $\bX_n\rightsquigarrow\bX$, then $\delta(\bX_n)\to\delta(\bX)$.
\end{enumerate}

We note that \citet{MSz2019} as well as \citet{HOPZ2019} avoid to establish an axiom analogous to \ref{R7}. In Section \ref{sec.axioms} we prove that our proposed index satisfies both sets of axioms, the modified \citeauthor{Renyi1959} (\ref{A1}-\ref{A6}, \ref{A8}) and \citeauthor{MSz2019} (\ref{B1}-\ref{B4}).

\section{The bivariate case}
\label{sec.bivariate}

We aim to construct a measure of mutual dependence that satisfies axioms \ref{A1}-\ref{A8} discussed in the previous section. For this purpose, in this section we briefly discuss the bivariate case that has been studied by \citet{VAM2019}.

Consider the rvs $T$ and $\vP$ defined by \eqref{eq.T,Pi,E}. The Kendall distribution function is defined as $K(t)=\Pr(T\le t)$. Since the distribution function of $\vP$ that corresponds to the independent case is $K_{\vP}(t)=t-t\ln t$, $t\in[0,1]$, the Kendall-plot is the visualization of the curve $(K(t),t-t\ln t)$,  $t\in[0,1]$, and compared to the diagonal line $(t-t\ln t,t-t\ln t)$, $t\in[0,1]$, it provides a rich source of information reference the association between the components of $\bX$. In the light of an \ROC analysis, \citet{VAM2019} show that the area under the Kendall curve is $\AUK=-\int_0^1 K(t)\ln t\ud{t}$, and then that
\[
\AUK=\E(1-T+T\ln T),
\]
which is useful both, for direct evaluation and for simulation studies based on the $\AUK$; it is also used for nonparametric estimation of $\AUK$ based on a random sample of pairs from $F$.

\citet{VAM2019} offer motivation for the use of an extension of the Kendall plot to a multi-panel Kendall plot that consists of the ordinary Kendall plots of the four orthogonal rotations of $\bX$; in particular, these are the Kendall plots that correspond to the Kendall functions $K_i(t)=\Pr\{H_i(\bX)\le t\}$, $i=1,\ldots,4$, where
\begin{equation}\label{eq.Hs}
\begin{split}
H_1(x_1,x_2)&=\Pr(X_1\le x_1,X_2\le x_2),\quad H_2(x_1,x_2)=\Pr(X_1> x_1,X_2\le x_2),\\
H_3(x_1,x_2)&=\Pr(X_1\le x_1,X_2> x_2),  \quad H_4(x_1,x_2)=\Pr(X_1> x_1,X_2> x_2).
\end{split}
\end{equation}
Using these Kendall functions, they provide the four corresponding $\AUK$s ($\AUK_i$, $i=1,\ldots,4$). Based on $\bD=(\AUK_1,\ldots,\AUK_4)^T$ they establish an index of dependence between $X_1$ and $X_2$ by the relation
\[
I=(8/5)^{1/2}\|\bD-\bDelta\|,
\]
where $\bDelta=(1/2,\ldots,1/2)^T\in\RR^4$ and $\|\bx\|$ denotes the Euclidean norm of the vector $\bx$. They also present a standardized index of dependence based on $I$ that ensures a linear mapping with $|\rho|$ in the case of the bivariate normal distribution, and they give an approximation of this standardized index by
\[
I^*=\varphi_2(I),
\]
where $\varphi_2(t)=2.070t + 0.061t^2 - 2.471t^3 + 1.307t^4 + 0.033t^5$, $t\in[0,1]$.

In what follows, we will extend the index of dependence discussed above to the $d$-dimensional case, $d>2$.

\section{Extension to higher dimensional case}
\label{sec.extension}

Let $\bX$ be a $d$-dimensional random vector which has continuous components. For the dimension $d$ of $\bX$, consider the rvs $T$, $\vP$ and $E$ defined by \eqref{eq.T,Pi,E}. A natural extension from the bivariate case to $d$-dimensional case of the Kendall distribution function is given by $K(t)=\Pr(T\le t)$. When we are interested in measuring the dependence structure of $\bX$, according to \ref{D2}, we can plot this Kendall function against the cdf of $\vP$, which corresponds to the case of independence of the components of $\bX$; and then, we compare this curve with the diagonal line in $[0,1]\times[0,1]$. Namely, we compare the curve $(K(t),K_{\vP}(t))$ with the diagonal line $(K_{\vP}(t),K_{\vP}(t))$, $t\in[0,1]$. Both Kendall cdf, $K_{\vP}(\cdot)$, and probability density function (pdf), $k_{\vP}(\cdot)$, of $\vP$ may by computed in close forms. Specifically, as is evident $E\sim\Erlang(d,1)$,
\begin{equation}\label{eq.K,k}
 K_{\vP}(t)=t\sum_{j=0}^{d-1}\frac{(-1)^k}{k!}\ln^k t
 \ \ \text{and} \ \
 k_{\vP}(t)=\frac{(-1)^{d-1}}{(d-1)!}\ln^{d-1}t,
 \quad 0<t<1.
\end{equation}
Observe now that $2E$ follows the chi-square distribution with $2d$ degrees of freedom, denoted as $E\sim\chi^2_{2d}$. By definition of $E$, see \eqref{eq.T,Pi,E}, the cdf and pdf of $\vP$ can be presented through the cdf and pdf of $\chi^2_{2d}$; specifically,
\[
K_{\vP}(t)=1-F_{\chi^2_{2d}}(-2\ln t) \ \ \textrm{and} \ \ k_{\vP}(t)=\frac{2}{t}f_{\chi^2_{2d}}(-2\ln t), \quad t>0.
\]
The above formulas are appropriate for computation.

The area under the Kendall ($\AUK$) curve is $\int_0^1 K(t)\ud{K_{\vP}(t)}=\int_0^1 K(t)k_{\vP}(t)\ud{t}$, that is
\[
\AUK=\Pr(T\le \vP).
\]
Since $\Pr(T\le \vP)=\E\{\Pr(T\le \vP|\vP)\}=\E\{\Pr(T\le \vP|T)\}=\E\{1-\Pr(\vP<T|T)\}$,
\[
\AUK=\E\left\{K(\vP)\right\}=\E\left\{1-K_{\vP}(T)\right\}.
\]
Note that the formulas of $K_{\vP}$ and $\AUK$ in bivariate case follow from the above corresponding formulas of the general case by setting $d=2$.

The distance between $\AUK$ and $1/2$ may itself be used as a measure of divergence between $F$ and $\prod_{j=1}^dF_j$. But, it cannot be used as a measure of dependence because it does not satisfy the axioms of a measure of dependence that are listed in Section \ref{sec.motiv}, Axioms \ref{A1} to \ref{A8} or \ref{B1} to \ref{B4}.

We now investigate the four Kendall functions that lead to this multi-panel plot in the bivariate case. This investigation offers understanding that is used in the extension of the work for the case of $d>2$. Observe that, since $X_1$ and $X_2$ are continuous, the functions $H_1,\ldots,H_4$ that are defined by \eqref{eq.Hs} are the cdfs of $\bX_1=(X_1,X_2)^T$, $\bX_2=(-X_1,X_2)^T$, $\bX_3=(-X_1,-X_2)^T$ and $\bX_4=(X_1,-X_2)^T$ respectively; hence, the Kendall distribution functions $K_1,\ldots,K_4$ are the corresponding ordinary Kendall distribution functions of $\bX_1,\ldots,\bX_4$. Therefore, the four rotations of a bivariate random vector are leading to the construction of the corresponding Kendall functions, and so to the construction of the $\AUK$-vector, $\bD=(\AUK_1,\ldots,\AUK_4)^T$, that can be presented in an algebraic way, through all possible multiplications of the components $X_1$ and $X_2$ with $\pm$. This representation provides us a natural algebraic manner to extend the $\AUK$-method in highest dimensional random vectors.

In view of the above analysis, we consider the set of the orthogonal ``rotations'' of $\bX$, say $\CR(\bX)$. Analytically, define the subset $\SSS^d$ of $\RR^d$ that contains all possible vectors with elements $\pm1$, that is
\[
\SSS^d\doteq\{-,+\}^d=\{\bs_j\}_{j=1}^{2^d}.
\]
Then, the set of the orthogonal ``rotations'' of $\bX$ is
\[
\CR(\bX)\doteq\{\bX_j=\diag(\bs_j)\bX\colon \bs\in\SSS^d\},
\]
where $\diag(\bs_j)$ denotes the $d\times d$ diagonal matrix with diagonal elements the elements of $\bs_j$. For $\bX_j\in\CR(\bX)$, $j=1,\ldots,2^d$, define
\[
T_j\doteq F_{\bX_j}(\bX_j)
\quad\text{and}\quad
K_j(t)=\Pr(T_j\le t).
\]
The corresponding, under the Kendall curve, areas are
\[
\AUK_j
      =\Pr(T_j\le \vP)
      =\E\left\{1-K_{\vP}(T_j)\right\}.
\]
Define $\bD=(\AUK_1,\ldots,\AUK_{2^d})^T$ and $\bDelta=(1/2,\ldots,1/2)^T\in\RR^{2^d}$.

Let now $X$ be a continuous rv, and consider the random vectors $\bX_1=(X,\ldots,X)^T$ and $\bX_2=(X,\ldots,X,-X)^T$ in $\CX^d$. Write $\bX_2=f(\bX_1)$, where $f=(f_1,\ldots,f_d)^T$ with $f_i(x)=x$ for $i=1,\ldots,d-1$ and $f_d(x)=-x$. Obviously, $f_i$ is a strictly monotone function for all $i=1,\ldots,d$, and $f$ is a similarity transformation; so, for any measure of dependence $\delta$ it is required $\delta(\bX_1)=\delta(\bX_2)$. On the other hand, the Kendall functions of $\bX_1$ and $\bX_2$ are $K_1(t)\equiv K_{\bX_1}(t)=t$ and $K_2(t)\equiv K_{\bX_2}(t)=1$, $0\le t\le1$. By application of the formula $\AUK=\int_0^1 K(t)k_{\vP}(t)\ud{t}$, we get $\AUK_1=2^{-d}$ and $\AUK_2=1$ (for computational details see in the proof of \eqref{eq.c_d} in Appendix \ref{app:proofs}), and hence, the distances between $\AUK_{\bX_i}$ and $1/2$ (the value that corresponds to the independence case), $i=1,2$, are $1/2-2^{-d}$ and $1/2$ respectively. Because of Axiom \ref{A6} or Axiom \ref{B2}, the $\AUK$ of $\bX\in\CX^d$ cannot be used for measuring the dependence structure of $\bX$.

In general, due to \ref{A6} or \ref{B2}, $\delta(\bX_j)=\delta(\bX)$ for all $\bX_j\in\CR(\bX)$. Hence, we propose an $\AUK$-based index of dependence of the form $c_d\|\bD-\bDelta\|$, where the constant $c_d$ is chosen such that the index takes the value 1 whenever $\bX$ is of the form $(X,\ldots,X)^T$ for some continuous rv $X$. Let $X$ be a continuous rv and set $\bX=(X,\ldots,X)^T\in\CX^d$. Then, the set of random vectors $\CR(\bX)$ contains exactly two random vectors with all components being the same, $(X,\ldots,X)^T$ and $(-X,\ldots,-X)^T$. Each of $2^d-2$ remaining random vectors has components that are rvs $X$ and $-X$. Each of $(X,\ldots,X)^T$ and $(-X,\ldots,-X)^T$ has the same Kendall function $t$, $0\le t\le1$, while each of $2^d-2$ remaining random vectors has the same Kendall function $1$, $0\le t\le1$; therefore, the constant $c_d$ is (for computational details see in the proof of \eqref{eq.c_d} in Appendix \ref{app:proofs})
\begin{equation}\label{eq.c_d}
c_d=\left(2^{d-2}-2^{1-d}+2^{1-2d}\right)^{-1/2}.
\end{equation}
Thus, the proposed index of dependence takes the form
\begin{equation}\label{eq.I}
I=\left(2^{d-2}-2^{1-d}+2^{1-2d}\right)^{-1/2}\|\bD-\bDelta\|.
\end{equation}

Similar to the bivariate case, we are interested in finding the easily interpreted standardized index, $I_{\mathrm{st}}$. This index is a function of $I$ that maps $I\mapsto|\rho|$ whenever $\bX\sim \CN_d(\bzero,\Sig_d(\rho))$, $0\le\rho\le1$; namely, $I_{\mathrm{st}}=\phi_d(I)$. The index does not have a closed form expression even in the bivariate case, but it can be approximated by $I^*=\varphi_d(I)$.

For the trivariate normal distribution, $\bX\sim \CN_3(\bzero,\Sig_3(\rho))$, we compute the true value of $I$ for various values of $\rho$, see Table \ref{table.(r,I(r))} in Supplementary Material. In similar way as in \citet{VAM2019}, i.e.\ by applying the Lagrange interpolation polynomial formula, a polynomial approximation of $\phi_3$ is
\[
\varphi_3(t)=1.62t+4.45t^2-13.48t^3+12.13t^4-3.72t^5, \ t\in[0,1].
\]

The preceding approximation function $\varphi_3\colon[0,1]\to[0,1]$ is a strictly increasing function. Moreover, $\varphi_3$ is a perfect fit to the true $\phi_3$, see Figure \ref{sfig.phi3}; therefore, the approximate standardized index $I^*$ has an almost perfect fit to the standardized index $I_{\mathrm{st}}$, that is, it has an almost perfect fit to the identity function of $\rho$ when the underlying distribution is $\CN_3\left(\bzero,\Sig_3(\rho)\right)$ with $\rho\in[0,1]$, see Figure \ref{sfig.I}.

\begin{figure}[htp]
\begin{subfigure}[a]{.5\textwidth}
\centering
\FIG
{\resizebox{.7\linewidth}{!}{
\begin{tikzpicture}
\begin{axis}[x=75, y=75, xmin=-.1, xmax=1.1, ymin=-.1, ymax=1.1, 
             tick label style={fill=none, font=\scriptsize}, 
             font=\scriptsize, xlabel={$t$}, xlabel style={yshift=5},
             legend entries={$\phi_3(t)$,$\varphi_3(t)$}, legend pos=south east, legend style={fill=yellow!20, draw=none, font=\scriptsize, row sep=-3pt}]
\addplot[red, hobby, ultra thin] coordinates {
(0	       ,0.00)
(0.02659993,0.05)
(0.05186133,0.10)
(0.07652715,0.15)
(0.10050050,0.20)
(0.12391750,0.25)
(0.14751990,0.30)
(0.17112230,0.35)
(0.19434300,0.40)
(0.21837800,0.45)
(0.24250160,0.50)
(0.26716900,0.55)
(0.29398300,0.60)
(0.32149380,0.65)
(0.35195040,0.70)
(0.38533590,0.75)
(0.42312600,0.80)
(0.46735520,0.85)
(0.52426650,0.90)
(0.60895190,0.95)
(0.66237760,0.97)
(0.70056830,0.98)
(0.75774610,0.99)
(0.8413990,.995)
(1,1)};
\addplot[color=blue, domain=0:1, dashed, samples=100,smooth] {(-3.751*\x*\x*\x*\x*\x+12.235*\x*\x*\x*\x-13.607*\x*\x*\x+4.513*\x*\x+1.61*\x)};
\addplot[black, draw=none, mark=*, mark options={solid, scale=.3}] coordinates {
(0,0)
(1,1)};
\end{axis}
\end{tikzpicture}
}}
{\includegraphics[width=.7\linewidth]{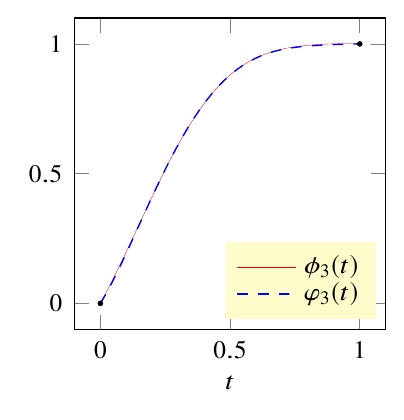}}
\caption{The plots of the functions $\phi_3$ and $\varphi_3$.}
\label{sfig.phi3}
\end{subfigure}
\hfill
\begin{subfigure}[a]{.5\textwidth}
\centering
\FIG
{\resizebox{.7\linewidth}{!}{
\begin{tikzpicture}
\begin{axis}[x=75, y=75, xmin=-.1, xmax=1.1, ymin=-.1, ymax=1.1, 
             tick label style={fill=none, font=\scriptsize}, 
             font=\scriptsize, xlabel={$\rho$-values}, xlabel style={yshift=5},
             legend entries={$I(\rho)$,$I^*(\rho)$}, legend pos=north west, legend style={fill=yellow!20, draw=none, font=\scriptsize, row sep=-3pt}]
\addplot[red, hobby, mark=star, mark options={solid, scale=.6}] coordinates {
(0.00,0         )
(0.05,0.02659993)
(0.10,0.05186133)
(0.15,0.07652715)
(0.20,0.10050050)
(0.25,0.12391750)
(0.30,0.14751990)
(0.35,0.17112230)
(0.40,0.19434300)
(0.45,0.21837800)
(0.50,0.24250160)
(0.55,0.26716900)
(0.60,0.29398300)
(0.65,0.32149380)
(0.70,0.35195040)
(0.75,0.38533590)
(0.80,0.42312600)
(0.85,0.46735520)
(0.90,0.52426650)
(0.95,0.60895190)
(0.97,0.66237760)
(0.98,0.70056830)
(0.99,0.75774610)
(.995,0.8413990)
(1,1)};
\addplot[blue,hobby, mark=*, mark options={scale=.3}] coordinates {
(0.00,0          )
(0.05,0.045769067)
(0.10,0.093824008)
(0.15,0.143950147)
(0.20,0.194786074)
(0.25,0.245690398)
(0.30,0.297568627)
(0.35,0.349417333)
(0.40,0.399880290)
(0.45,0.451064924)
(0.50,0.500943175)
(0.55,0.550018159)
(0.60,0.600781558)
(0.65,0.649736164)
(0.70,0.699925930)
(0.75,0.749841237)
(0.80,0.799731096)
(0.85,0.849240483)
(0.90,0.899494070)
(0.95,0.949626564)
(0.97,0.969012626)
(0.98,0.978480541)
(0.99,0.987680219)
(.995,0.992269079)
(1,1)};
\addplot[black, draw=none, mark=*, mark options={solid, scale=.3}] coordinates {
(0,0)
(1,1)};
\end{axis}
\end{tikzpicture}
}}
{\includegraphics[width=.7\textwidth]{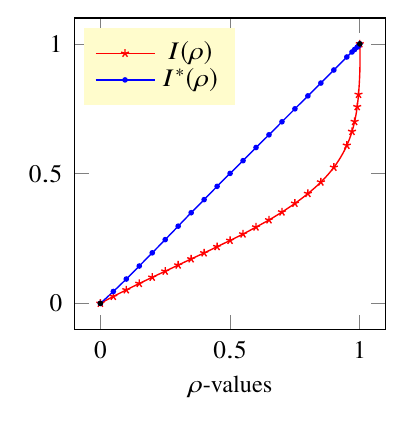}}
\caption{The plots of $I$ and $I^*$ in $\CN_3\left(\bzero,\Sig_3(\rho)\right)$ case.}
\label{sfig.I}
\end{subfigure}
\caption[The functions $\phi_3$ and $\varphi_3$, as well as the indices $I$ and $I^*$ in trivariate normal distribution.]{The functions $\phi_3$ and $\varphi_3$, as well as the indices $I$ and $I^*$ in $\CN_3\left(\bzero,\Sig_3(\rho)\right)$ case.}
\label{fig.N3}
\end{figure}

Both $\varphi_2\colon[0,1]\to[0,1]$ and $\varphi_3\colon[0,1]\to[0,1]$ are strictly increasing functions. The Lagrange interpolation polynomial formula, that has been used for the cases of dimension 2 and 3, often leads to find interpolating polynomials that are not strictly monotone; hence, it is wise to avoided for dimensions greater than 3. For any specific value of $d$, we propose the use the {\tt MonoPoly} R package \citep{TM2019} to find an appropriate, strictly monotone, polynomial for approximating the standardized index. The procedure is presented by Algorithm \ref{algorithm-phi}; in Section \ref{sm:R} of the Supplementary Material, we provide a realization of Step \ref{algorithm-phi.step2} of the algorithm.

\begin{algorithm}
\caption{Computation of the approximate standardized index $I^*$.}
 \label{algorithm-phi}
\small
\begin{algorithmic}[1]
\State\label{algorithm-phi.step1}
Let $0\le\rho\le1$ and consider $\bX_\rho\sim \CN_d(\bzero,\Sig_d(\rho))$ as in Remark \ref{rem.A7}, and consider the vector of $\rho$-values $\bm{\varrho}=(0,0.4,0.8,0.95,0.99,1)$. Then compute $\bm{I}\equiv\bm{I}(\bm{\varrho})\doteq(I(\varrho_1),\ldots,(\varrho_6))$.

\Comment{By definition, $I(0)=0$ and $I(1)=1$.

\State\label{algorithm-phi.step2}
Fit a monotone polynomial on $[0,1]$ of degree $7$, say $\varphi_d$, to the data set $\{(I(\varrho_j),\varrho_j), \ j=1,\ldots,6\}$.

\State\label{algorithm-phi.step3}
Define $I^*\doteq\varphi_d(I)$.
\end{algorithmic}
\end{algorithm}
}
\pagebreak

Based on the values of $I^*$ we define four levels of dependence.
\begin{definition}
\label{def.level}
Let $\bX\in\CX^d$. We say that $\bX$ has a weak, mild, strong or very strong dependence structure if $I^*$ belongs to $[0,0.25)$, $[0.25,0.5)$, $[0.5,0.75)$ or $[0.75,1]$ respectively.
\end{definition}

\begin{remark}
\label{rem.increasing}
Via numerical study, we observe that when the underlying distribution is $\CN_2\left(\bzero,\Sig_2(\rho)\right)$ \citep[see][]{VAM2019} or $\CN_3\left(\bzero,\Sig_3(\rho)\right)$ (see Table \ref{table.(r,I(r))}, Section \ref{sm:numerical} of the Supplementary Material) the index $I$ is an increasing function of $\rho\in[0,1]$. We conjecture that this is true whenever the underlying distribution is $\CN_d\left(\bzero,\Sig_d(\rho)\right)$ for any value of $d$.
\end{remark}

\section{Investigation of the index in view of the dependence axioms}
\label{sec.axioms}

Here we investigate the classes of random vectors in which Axioms \ref{A1} to \ref{A8} and \ref{B1} to \ref{B4} hold; notice that Axiom \ref{A7} is only mentioned in the case $d=2$.

For Axioms \ref{A4} and \ref{B1} we define the class of random vectors $\CX^d_1$, while for Axioms \ref{A3}, \ref{A5} and \ref{B3} we define the class $\CX^d_2$. We first state the conditions that determine these classes of random vectors.
\begin{enumerate}[topsep=0ex, itemsep=0ex, labelindent=0pt, leftmargin=6ex, label=$\mathrm{C}_{\arabic*}\colon$, ref=\textrm{\textcolor{black}{$\mathrm{C}_{\arabic*}$}}]
 \item
 \label{C1}
 One can detect at least a subscript $j=1,\ldots,2^d$ for which the Kendall function of $\bX_j\in\CR(\bX)$ is such that $K_j(t)-K_{\vP}(t)\ge0$ or $\le0$ for all $0\le t \le 1$. That is, we can find at least one Kendall plot that does not cross the diagonal line.

 \item
 \label{C2}
 One can detect at least two subscripts $j=1,\ldots,2^d$ for which the functions of $\bX_j\in\CR(\bX)$ are such that $K_j(t)-K_{\vP}(t)\le0$ for all $0\le t \le 1$. That is, we can find at least two Kendall plots that lie below the diagonal line.
\end{enumerate}
Consider now the following subclasses of $\CX^d_0$:
\[
\CX^d_1\doteq\{\bX\in\CX^d_0\colon \textrm{\ref{C1} holds}\};
\quad
\CX^d_2\doteq\{\bX\in\CX^d_0\colon \textrm{\ref{C2} holds}\}.
\]
Note that $\CX^d_2\subset\CX^d_1\subset\CX^d_0$. Moreover, $\CX^d_2$ is not an empty set; for each continuous rv $X$, the random vector $(X,\ldots,X)^T$ on $\RR^d$ belongs to $\CX^d_2$ because its Kendall function is $K(t)=t$, $0\le t\le1$, see the proof of Equation \eqref{eq.c_d} in Appendix \ref{app:proofs}, which lies below of $K_{\vP}(t)$, $0\le t\le1$ (set $g(t)=K_{\vP}(t)-t$, $0\le t\le1$, and observe that $g$ is a concave function with $g(0)=g(1)=0$). In Section \ref{sec.empirical} we present Algorithm \ref{algorithm-class}, that one can use to check whether the data come from a distribution that belongs in $\CX^d_1$ or $\CX^d_2$.

We are now in a position to investigate Axioms \ref{A1} to \ref{A8} as well as \ref{B1} to \ref{B4}.

\begin{proposition}
\label{prop.A1-A8}
For the index of dependence $I$ defined by \eqref{eq.I} we have:
\begin{enumerate*}[topsep=0ex, itemsep=0ex, wide, labelwidth=!, labelindent=0pt, label=\rm(\alph*), ref=\textrm{\textcolor{black}{\alph*}}]
 \item
 \label{prop.A1-A8(a)}
 Axioms \ref{A1}, \ref{A2}, \ref{A6}, \ref{A7} and \ref{A8} hold in $\CX^d_0$;

 \item
 \label{prop.A1-A8(b)}
 Axiom \ref{A4} holds in $\CX^d_1$;

 \item
 \label{prop.A1-A8(c)}
 Axioms \ref{A3} and \ref{A5} hold in $\CX^d_2$.\hfill\phantom{.}
\end{enumerate*}
\end{proposition}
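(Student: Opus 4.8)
The plan is to push every axiom down to a statement about the multiset $\{\AUK_1,\dots,\AUK_{2^d}\}$ of coordinates of $\bD$, using three elementary facts. (i) For each rotation, $\AUK_j=\int_0^1K_j(t)k_{\vP}(t)\,\ud{t}=\E\{g_d(T_j)\}$, where $g_d(t)\doteq1-K_{\vP}(t)=\Pr(\vP\ge t)$ is bounded and continuous on $[0,1]$, and $k_{\vP}>0$ on $(0,1)$ with $\int_0^1k_{\vP}(t)\,\ud{t}=1$. (ii) Every Kendall function of a continuous random vector obeys $t\le K_j(t)\le1$: the upper bound is trivial, and for the lower bound pick, by continuity of $F_{\bX_j}$, a point $\bx^\ast$ with $F_{\bX_j}(\bx^\ast)=t$ and note $\{\bX_j\le\bx^\ast\}\subseteq\{F_{\bX_j}(\bX_j)\le t\}$, so $K_j(t)\ge F_{\bX_j}(\bx^\ast)=t$; hence $2^{-d}=\E(\vP)\le\AUK_j\le1$. (iii) Replacing $\bX$ by a rotation $\diag(\bs)\bX$, by a coordinate permutation, or by $f(\bX)$ with each $f_i$ strictly monotone only permutes the family $\CR(\bX)$ (a decreasing $f_i$ or a sign flip multiplies $\bs$ by a fixed element of $\SSS^d$; an increasing $f_i$ leaves the Kendall function intact), so $\bD$ gets permuted and, $\bDelta$ being constant, $\|\bD-\bDelta\|$ and hence $I$ are unchanged.

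\emph{Part (a).} Axiom \ref{A1} is immediate because $T_j$, and hence $\AUK_j$, are well defined for every $\bX\in\CX^d_0$, and \ref{A2}, \ref{A6} are precisely fact (iii) (with $f(\bX)\in\CX^d_0$). Axiom \ref{A7}, meaningful only for $d=2$, holds since for $d=2$ the index \eqref{eq.I} is the index of \citet{VAM2019}, for which the claimed monotonicity in $|\rho|$ was established. For \ref{A8}, by fact (i) and the portmanteau theorem it suffices that $\bX_n\rightsquigarrow\bX$ imply $T_{n,j}\rightsquigarrow T_j$, i.e.\ convergence of the associated Kendall distribution functions; this is proved as in the bivariate case of \citet{VAM2019} (passing through uniform convergence $C_n\to C$ of the copulas and composing with the convergent-in-distribution arguments). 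Then $\bD(\bX_n)\to\bD(\bX)$ and $I(\bX_n)\to I(\bX)$.

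\emph{Part (c).} Trivially $I\ge0$. For $I\le1$ on $\CX^d_2$, use the identity $1/c_d^2=2^{d-2}-2^{1-d}+2^{1-2d}=2(1/2-2^{-d})^2+(2^d-2)/4$. By \ref{C2} there are two indices with $K_j\le K_{\vP}$, so $\AUK_j\le\int_0^1K_{\vP}(t)k_{\vP}(t)\,\ud{t}=1/2$; together with $\AUK_j\ge2^{-d}$ this gives $(\AUK_j-1/2)^2\le(1/2-2^{-d})^2$ for those two, while $(\AUK_j-1/2)^2\le1/4$ for the other $2^d-2$. Summing, $\|\bD-\bDelta\|^2\le1/c_d^2$, i.e.\ $I\le1$; this establishes \ref{A3}. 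For \ref{A5}: if every $X_i$ is a.s.\ a strictly monotone function of $X_1$, then $\bX=f(\bX_0)$ with $\bX_0=(X_1,\dots,X_1)^T$ and strictly monotone $f_i$, so $I(\bX)=I(\bX_0)=1$ by \ref{A6} and the normalization \eqref{eq.c_d}. Conversely, $I(\bX)=1$ with $\bX\in\CX^d_2$ forces equality in every inequality just used: the two ``below-diagonal'' indices have $\AUK_j=2^{-d}$ (the root $1-2^{-d}>1/2$ is excluded), and then $\int_0^1(K_j(t)-t)k_{\vP}(t)\,\ud{t}=0$ with $K_j(t)\ge t$ and $k_{\vP}>0$ on $(0,1)$ forces $K_j(t)=t$ for all $t$; an elementary stochastic comparison (from $C_j(\bU)\le\min_iU_i$ and uniform margins one deduces $U_1=\dots=U_d$ a.s.) shows $\bX_j$ is comonotone, which is exactly the stated property of $\bX$. (The remaining $2^d-2$ indices satisfy $\AUK_j=1$, the root $0$ being excluded by $\AUK_j\ge2^{-d}$.)

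\emph{Part (b) and the main obstacle.} The ``if'' direction of \ref{A4} is easy: if $X_1,\dots,X_d$ are independent so are the components of every rotation, hence $T_j$ has the law of $\vP$, so $K_j=K_{\vP}$ and $\AUK_j=1/2$ for all $j$, whence $\bD=\bDelta$ and $I=0$. Conversely, $I(\bX)=0$ gives $\AUK_j=1/2$ for every $j$; by \ref{C1} some rotation $\bX_{j_0}$ has $K_{j_0}-K_{\vP}$ of constant sign, so $0=\AUK_{j_0}-1/2=\int_0^1(K_{j_0}-K_{\vP})k_{\vP}\,\ud{t}$ together with $k_{\vP}>0$ on $(0,1)$ forces $K_{j_0}\equiv K_{\vP}$; it then remains to conclude that the components of $\bX_{j_0}$, equivalently of $\bX$, are independent. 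I expect this last step to be the hard part: a Kendall distribution function does not in general determine the copula, so the identity $K_{j_0}\equiv K_{\vP}$ must be upgraded to independence using the non-crossing condition \ref{C1} (and, presumably, the fact that $\AUK_j=1/2$ holds for all $2^d$ rotations, not merely for $j_0$). The analogous converse step in \ref{A5} is the milder statement that $K_j\equiv\mathrm{id}$ characterizes comonotonicity, dispatched by the stochastic-comparison argument above; it is the independence characterization behind \ref{A4} that is genuinely delicate.
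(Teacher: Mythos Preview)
Your argument tracks the paper's closely: the permutation-invariance reduction for \ref{A2} and \ref{A6}, the reference to \citet{VAM2019} for \ref{A7}, the Portmanteau argument for \ref{A8}, and the bound $\|\bD-\bDelta\|^2\le 2(1/2-2^{-d})^2+(2^d-2)/4=c_d^{-2}$ for \ref{A3} under \ref{C2} are all exactly what the paper does (your constant $(1/2-2^{-d})^2$ is the right one; the paper prints $(1-2^{-d})^2$, a typo, and likewise writes ``$\AUK_j=0$'' where ``$\AUK_j=1/2$'' is meant). For the converse of \ref{A5} your stochastic-comparison argument ($C_j(\bU)\le\min_iU_i$ with both sides $U(0,1)$ forces $U_1=\dots=U_d$ a.s.) is more explicit than the paper's, which simply cites Sklar's theorem for the same conclusion.

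On the converse of \ref{A4} you correctly isolate the crux: from $K_{j_0}\equiv K_{\vP}$ one still has to deduce that the copula of $\bX_{j_0}$ is the independence copula. The paper's proof does not supply an argument here either; it writes ``from Sklar's Theorem the corresponding (unique) copula to $F_{\bX_{j'}}$ is the copula $\bC_0(\bu)=u_1\cdots u_d$.'' But Sklar's theorem only asserts existence and uniqueness of the copula for continuous marginals; it does not say that the Kendall distribution function determines the copula. So the step you flagged as ``genuinely delicate'' is precisely the step the paper asserts without proof. In other words, your proposal is not missing anything relative to the paper---you have simply been more candid about where the real work lies.
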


\begin{proposition}
\label{prop.B1-B4}
For the index of dependence $I$ defined by \eqref{eq.I} we have:
\begin{enumerate*}[topsep=0ex, itemsep=0ex, wide, labelwidth=!, labelindent=0pt, label=\rm(\alph*), ref=\textrm{\textcolor{black}{\alph*}}]
 \item
 \label{prop.B1-B4(a)}
 Axiom \ref{B1} holds in $\CX^d_1$;

 \item
 \label{prop.B1-B4(b)}
 Axioms \ref{B2} and \ref{B4} hold in $\CX^d_0$;

 \item
 \label{prop.B1-B4(c)}
 Axioms \ref{B3} holds in $\CX^d_2$.\hfill\phantom{.}
\end{enumerate*}
\end{proposition}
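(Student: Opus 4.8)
The plan is to prove \cref{prop.B1-B4} largely by reusing \cref{prop.A1-A8}, since the index $I$ is the same object and \ref{B1}--\ref{B4} overlap heavily with \ref{A1}--\ref{A8}. Throughout I use that $I=c_d\|\bD-\bDelta\|$ with $\bD=(\AUK_1,\dots,\AUK_{2^d})^T$, $\AUK_j=\int_0^1K_j(t)k_{\vP}(t)\,dt=\E\{1-K_{\vP}(T_j)\}$, where $T_j=F_{\bX_j}(\bX_j)$ and $\bX_j=\diag(\bs_j)\bX$ ranges over $\CR(\bX)$. Two elementary facts are used repeatedly: $k_{\vP}(t)>0$ on $(0,1)$; and, writing $K_C$ for the Kendall function of a copula $C$ with $\bU\sim C$, one has $K_C(t)=\Pr\{C(\bU)\le t\}\ge\Pr\{\min_{1\le i\le d}U_i\le t\}\ge t$, so that $2^{-d}=\E\vP\le\AUK_j\le1$ for every $j$.

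The two ``easy'' axioms reduce to \cref{prop.A1-A8}. Axiom \ref{B4} (continuity) follows from \ref{A8}, since \ref{B4} merely adds a uniform-integrability hypothesis to that of \ref{A8}. For Axiom \ref{B2} (similarity invariance) on $\CX^d_0$, write a similarity as $L\bx=\lambda O\bx+\bb$: the translation by $\bb$ and the positive scaling by $\lambda$ act coordinatewise and strictly monotonically, hence leave the copula of $\bX$, every $K_j$, and $\bD$ unchanged (this is \ref{A6}), while the orthogonal factor relevant to the construction is a signed permutation, which permutes $\CR(\bX)$ and therefore the coordinates of $\bD$, leaving $\|\bD-\bDelta\|$ invariant --- this is exactly the invariance already invoked in \cref{sec.extension} to get $\delta(\bX_j)=\delta(\bX)$.

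For Axiom \ref{B1} ($I=0\iff$ independence) on $\CX^d_1$, which coincides with \ref{A4}: if the $X_i$ are independent then $T=\prod_iF_i(X_i)\eqd\vP$, and the same holds for every reflected $\bX_j$, so $\AUK_j=\Pr(T_j\le\vP)=1/2$ by exchangeability of two i.i.d.\ continuous copies, i.e.\ $I=0$; conversely $I=0$ gives $\AUK_j=1/2$ for all $j$, and for the index $j_0$ furnished by \ref{C1} the sign-constancy of $K_{j_0}-K_{\vP}$ together with $\int_0^1(K_{j_0}-K_{\vP})k_{\vP}\,dt=0$ and $k_{\vP}>0$ force $K_{j_0}\equiv K_{\vP}$, which --- with the full system $\AUK_j=1/2$ inside $\CX^d_1$ --- is equivalent to independence, as in the bivariate case of \citet{VAM2019}. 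For Axiom \ref{B3} ($I=1\iff L\bX=(X_L,\dots,X_L)^T$ a.s.\ for a similarity $L$) on $\CX^d_2$: from $2^{-d}\le\AUK_j\le1$ each term satisfies $(\AUK_j-\tfrac12)^2\le\tfrac14$, with equality iff $K_j\equiv1$; by \ref{C2} two indices $j_1,j_2$ have $K_{j_i}\le K_{\vP}$, hence $2^{-d}\le\AUK_{j_i}\le\tfrac12$ and $(\AUK_{j_i}-\tfrac12)^2\le(\tfrac12-2^{-d})^2$, with equality iff $K_{j_i}(t)\equiv t$. Summing, $\|\bD-\bDelta\|^2\le(2^d-2)\tfrac14+2(\tfrac12-2^{-d})^2=c_d^{-2}$, i.e.\ $I\le1$ (which also yields \ref{A3}), with equality exactly when $2^d-2$ rotations have $K_j\equiv1$ and the remaining two have $K_j(t)\equiv t$. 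The latter says $\bX_{j_1}=\diag(\bs_{j_1})\bX$ is comonotone, since $M$ is the unique copula whose Kendall function is the identity (the displayed chain is an equality for all $t$ only for $C=M$); undoing the reflection $\diag(\bs_{j_1})$ (a similarity) and the marginal monotone maps realising comonotonicity exhibits $\bX$ as similarity-equivalent to $(X_L,\dots,X_L)^T$, while the converse reuses the computation that fixed $c_d$ in the proof of \eqref{eq.c_d}.

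The delicate points are the two ``only if'' directions. In each, \ref{C1}/\ref{C2} serve precisely to upgrade an equality of integrals to a pointwise identity of Kendall functions ($K_{j_0}=K_{\vP}$, resp.\ $K_{j_1}(t)\equiv t$); the substantive step is then to read off the copula of $\bX$ from that identity. For the identity $K(t)\equiv t$ this is clean (uniqueness of the comonotone copula), but for $K\equiv K_{\vP}$ it is not, because the Kendall function does not by itself determine a copula --- so here the restriction to $\CX^d_1$ and the simultaneous use of all $2^d$ rotation constraints must carry the argument, as in \citet{VAM2019}. I also expect that expressing ``undo the marginal maps realising comonotonicity'' within the literally affine similarity-transformation language of Axiom \ref{B3} will need a line of care about how $\CX^d_2$ and the admissible transformations are to be understood.
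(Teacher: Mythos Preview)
Your approach is essentially the paper's: the proof of \cref{prop.B1-B4} in the appendix is literally ``(\ref{prop.B1-B4(a)}) See Proposition~\ref{prop.A1-A8}(\ref{prop.A1-A8(b)}). (\ref{prop.B1-B4(b)}) See Proposition~\ref{prop.A1-A8}(\ref{prop.A1-A8(a)}). (\ref{prop.B1-B4(c)}) Similar to Proposition~\ref{prop.A1-A8}(\ref{prop.A1-A8(c)}).'' You have reconstructed those reductions with more detail and with the right inequalities ($K_C(t)\ge t$, hence $2^{-d}\le\AUK_j\le 1$, and the extremal analysis under \ref{C2} giving $\|\bD-\bDelta\|^2\le c_d^{-2}$).

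Two points of comparison are worth noting. First, for the ``only if'' in \ref{B1}/\ref{A4} the paper does \emph{not} invoke all $2^d$ rotation constraints as you propose; it argues that $K_{j'}=K_{\vP}$ alone forces the copula of $\bX_{j'}$ to be the independence copula, citing Sklar's theorem, and hence the components of $\bX$ are independent. Your worry that the Kendall function need not determine the copula is mathematically legitimate, but the paper treats this step as settled by the cited references rather than by the rotation system. Second, your handling of \ref{B2} via ``the orthogonal factor relevant to the construction is a signed permutation'' matches what the reference to \ref{A6} actually delivers; neither your argument nor the paper's cross-reference addresses a general orthogonal $O$ that mixes coordinates, so your caution here is well placed and consistent with the paper's level of detail.
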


\begin{remark}
\label{rem.measure}
\begin{enumerate}[topsep=0ex, itemsep=0ex, wide, labelwidth=!, labelindent=0pt, label=\rm(\alph*), ref=\textrm{\textcolor{black}{\alph*}}]
\item\label{rem.measure(a)}
Propositions \ref{prop.A1-A8} and \ref{prop.B1-B4} say that $I$ is a measure of dependence for the structure of random vectors in $\CX^d_2$, in both \citeauthor{Renyi1959}'s as well as \citeauthor{MSz2019}'s definitions.
\item\label{rem.measure(b)}
The standardized index $I^*=\varphi_d(I)$ is also a measure of dependence for the structure of random vectors in $\CX^d_2$, in both \citeauthor{Renyi1959}'s as well as \citeauthor{MSz2019}'s definitions, whenever $\varphi_d$ is a bijective increasing mapping.
\item\label{rem.measure(c)}
$I$ and $I^*$ are indices of the dependence structure for random vectors in $\CX^d_0$.
\item\label{rem.measure(d)}
Consider a random vector $\bX\sim F$ in $\CX^d_0$, and suppose its marginal cdfs are $F_i(t)=\Pr(X_i\le t)$, $i=1,\ldots,d$. Applying the probability integral transform to each $X_i$, the random vector $\bU=(U_1,\ldots,U_d)^T\doteq (F_1(X_1),\ldots,F_d(X_d))^T$ has marginals that are uniformly distributed on $[0,1]$. The copula of $\bX$ is defined as the cdf of $\bU$, $C(\bu)=\Pr(\bU\le\bu)$; notice that the copula $C$ contains all information on the dependence structure between the components of $\bX$. Since Axiom \ref{A6} holds, see Proposition \ref{prop.A1-A8}\ref{prop.A1-A8(a)}, we have that the index $I$, and hence $I^*$, is a copula-based index of dependence; namely, $I_F=I_C$ and $I^*_F=I^*_C$.
\end{enumerate}
\end{remark}

\section{Estimating $\mathbf{AUK}$, $\bm{I}$, $\bm{I^*}$}
\label{sec.empirical}

Suppose that a random sample $\bX_i$, $i=1,\ldots,n$, has been drawn from a $d$-dimensional distribution $F$. For each $\bs_j\in\SSS^d$, $j=1,\ldots,d$, set $\bX_{ij}=\diag(\bs_j)\bX_i$, $i=1,\ldots,n$, which is a random sample from $\bX_j=\diag(\bs_j)\bX$; and let $\bbF_{nj}$ denote the empirical cumulative distribution function (ecdf) of $\bX_{ij}=\diag(\bs_j)\bX_i$, $i=1,\ldots,n$,
\[
\bbF_{nj}(\bx)=\frac{1}{n}\sum_{i=1}^{n}\one{\bX_{ij}\le \bx},\quad \bx\in\RR^d;
\]
consider the corresponding empirical Kendall distribution function
\begin{equation}\label{eq.bbK,bbk}
\bbK_{nj}(t)=\frac{1}{n}\sum_{i=1}^{n}\one{\bbF_{nj}(\bX_{ij})\le t},\quad t\in[0,1].
\end{equation}
Let $\hT_{nj}\sim\bbK_{nj}$, $j=1,\ldots,d$. By virtue of the formula $\AUK=\E\left\{1-K_{\vP}(T)\right\}$, we can estimate the $\AUK_j$s in a nonparametric manner via the statistic
\[
\hAUK_{nj}=\E\left\{1-K_{\vP}(\hT_{nj})\right\}=1-\frac{1}{n}\sum_{i=1}^{n}K_{\vP}(\hT_{nij})
,
\]
where $\hT_{nij}=\bbF_{nj}(\bX_{ij})$, $i=1,\ldots,n$. Then, we get nonparametric estimations of $I$ and $I^*$, $\hI_n$ and $\hI^*_n$ respectively, by
\[
\hI_n\doteq\left(2^{d-2}-2^{1-d}+2^{1-2d}\right)^{-1/2}\|\h{\bD}_n-\bDelta\|
\quad\textrm{and}\quad
\hI^*_n\doteq\varphi_d(\hI_n),
\]
where $\h{\bD}_n=(\hAUK_{n1},\ldots,\hAUK_{n2^d})^T$.

We are interested in studying the consistency of the proposed estimators. Theorem \ref{thm.hT->T} presents the consistency of $\bbK_{nj}$s, see in Appendix. Based on this result, in Appendix \ref{app:proofs}, we prove that $\hAUK_{nj}$ is a strongly consistent estimator of $\AUK_{j}$, that is,
\begin{equation}
\label{eq.as->AUK}
\hAUK_{nj}\as\AUK_{j}\textrm{ for all } j=1,\ldots,2^d;
\end{equation}
hence, using the continuous mapping theorem, $\hI_n\as I$ as well as $\hI^*_n\as I^*$, that is, both $\hI_n$ and $\hI^*_n$ are strongly consistent estimators of $I$ and $I^*$ respectively.

In addition to the consistency of $\hI_n$ and $\hI^*_n$, Theorem \ref{thm.hT->T} gives a graphical method for checking if the underlying data distribution belongs in $\CX^d_1$ or $\CX^d_2$; Algorithm \ref{algorithm-class} enables one to carry out such a check.

\begin{algorithm}
\caption{Algorithm for checking if the underlying data distribution belongs in $\CX^d_1$ or $\CX^d_2$.}
 \label{algorithm-class}
\small
\begin{algorithmic}[1]
\State\label{algorithm-class.step1}
Compute $K_\vP$ and $K_{nj}$, $j=1,\dots,2^d$, using the formulas \eqref{eq.K,k} and \eqref{eq.bbK,bbk}.

\State\label{algorithm-class.step2}
Visualize the curves $(K_{nj}(t),K_\vP(t))$, $0\le t\le1$, for all $j=1,\dots,2^d$.

\State\label{algorithm-class.step3}
{\bf Decision Rule:}
\begin{enumerate}[topsep=0ex, itemsep=0ex, labelindent=0pt, leftmargin=2.5ex]
\item[--]
If there is at least one curve that does not cross the diagonal line, then the data set is from $\CX^d_1$.
\item[--]
If there are at least two curves that lie below the diagonal line, then the data set is from $\CX^d_2$.
\end{enumerate}
\end{algorithmic}
\end{algorithm}

\section{Testing joint or mutual dependence}
\label{sec.test}

In this section, we investigate the null hypothesis of total independence. Let $\hAUK_n$ be the estimator of $\AUK$ of the original/non-rotated data distribution; namely, the $\hAUK_{nj}$ that corresponds to $\bs_j=(+,\ldots,+)^T\in\SSS^d$.

We are interested in testing the hypothesis
\[
H_0\colon F(\bx)=F_1(x_1)\cdots F_d(x_d) \quad\textrm{versus}\quad H_1\colon F(\bx)\ne F_1(x_1)\cdots F_d(x_d).
\]
For testing this hypothesis, we propose the statistic
\[
z_n=\surd{n}(\hAUK_n-1/2)/\sigma_{\vP},
\]
where the true value of the standard deviation $\sigma_{\vP}$ can be computed explicitly through the relations \eqref{eq.covergence(b)}, \eqref{eq.cov-function(a)}--\eqref{eq.cov-function(c)} given below. In the case $d=2$, we have computed $\sigma_{\vP}=(19/432)^{1/2}$ --- see Appendix \ref{app:Gn}.

We now prove the asymptotic normality of the $\hAUK_n$ under this null hypothesis. The result is derived through the asymptotic behavior of a modified Kendall process that originally was defined by \citet{GR1993} as well as the results given by \citet{BGGR1996}. For easy reference, we state these results in Appendix \ref{app:Gn}. In Appendix \ref{app:Gn}, we prove that
\begin{subequations}\label{eq.covergence}
\begin{equation}\label{eq.covergence(a)}
\surd{n}(\hAUK_n-1/2)\stackrel{H_0}{\rightsquigarrow}\CN(0,\sigma_{\vP}^2),
\quad\textrm{with}
\end{equation}
\begin{equation}\label{eq.covergence(b)}
\sigma_{\vP}^2=\int_{0}^{1}\int_{0}^{1}k_{\vP}(s)\varGamma_\vP(s,t)k_{\vP}(t)\ud{s}\ud{t}
              =2\iint_{0<s<t<1}k_{\vP}(s)\varGamma_\vP(s,t)k_{\vP}(t)\ud{s}{\ud{t}},
\end{equation}
\end{subequations}
where
\begin{subequations}\label{eq.cov-function}
\begin{equation}\label{eq.cov-function(a)}
\varGamma_\vP(s,t)=K_\vP(s\wedge t)-K_\vP(s)K_\vP(t)+k_\vP(s)k_\vP(t)R_\vP(s,t)-k_\vP(t)Q_\vP(s,t)-k_\vP(s)Q_\vP(t,s),
\end{equation}
where $\bm{u}\wedge\bm{v}$ denotes the componentwise minimum between $\bm{u}$ and $\bm{v}$,
\begin{equation}\label{eq.cov-function(b)}
Q_\vP(s,t)=(s\wedge t)\sum_{i=1}^{d}\frac{\ln^i(t/s\wedge t)}{i!}-tK_\vP(s)
\end{equation}
\begin{equation}\label{eq.cov-function(c)}
R_\vP(s,t)=\E\left(\exp\left[\sum_{i=1}^{d}\min\left\{\ln(s)U_1^{(i)},\ln(t)U_2^{(i)}\right\}\right]\right)-st,
\end{equation}
where $\bU_j=\left(U_j^{(1)},\ldots,U_j^{(d)}\right)^T$, $j=1,2$, are independent random vectors uniformly distributed on the simplex in $[0,1]^d$.
\end{subequations}

Therefore, at significance level $\alpha$, the asymptotic rejection region is $R=\{|z_n|>z_{\alpha/2}\}$, where $z_{\alpha}$ is the upper $\alpha$-point of the standard normal distribution. Hereafter, we call this test as $\AUK$ test for total independence or simply $\AUK$ independence test.

We investigate further the proposed independence test in the bivariate case. The distribution of $z_n$ is asymptotically $\CN(0,1)$. Table \ref{table.zna} contains the 90th, 95th and 99th percentiles of the empirical distribution of $|z_n|$ generated by simulation of $10^5$ samples of various sizes $n$ from the standard bivariate uniform distribution. For a sample of size $n$, the proposed $\alpha$-level independence test is
\[
\textrm{reject the total independence if } |z_n|>p_{d=2,n;1-\alpha},
\]	
where $p_{d=2,n;1-\alpha}$ is given in Table \ref{table.zna} noting that $p_{d=2,\infty;1-\alpha}=z_{\alpha/2}$.

\begin{table}[htp]
 \caption{Empirical percentiles of the distribution of $|z_n|$ for random samples of size $n$ drawn standard bivariate uniform distribution.}
 \label{table.zna}
 \small
 \begin{tabular*}{\textwidth}
 {@{\hspace{0ex}}@{\extracolsep{\fill}}l@{\hspace{0ex}}c@{\hspace{0ex}}c@{\hspace{0ex}}c@{\hspace{0ex}}c@{\hspace{0ex}}c@{\hspace{0ex}}c@{\hspace{0ex}} c@{\hspace{0ex}}c@{\hspace{0ex}}c@{\hspace{0ex}}c@{\hspace{0ex}}c@{\hspace{0ex}}c@{\hspace{0ex}}}
 \toprule
 $n$ & 30 & $50$ & $70$ & $100$ & $150$ & $200$ & $300$ & $400$ & $500$ & $750$ & $1000$ & $\infty$ \\
 \midrule
 $p_{d=2,n;0.90}$  & 2.30 & 2.11 & 2.01 & 1.93 & 1.84 & 1.79 & 1.74 & 1.72 & 1.71 & 1.68 & 1.67 & 1.65 \\
 $p_{d=2,n;0.95}$  & 2.62 & 2.44 & 2.34 & 2.25 & 2.17 & 2.12 & 2.06 & 2.05 & 2.03 & 2.01 & 1.98 & 1.96 \\
 $p_{d=2,n;0.99}$  & 3.19 & 3.05 & 2.95 & 2.87 & 2.78 & 2.75 & 2.68 & 2.67 & 2.65 & 2.63 & 2.60 & 2.57 \\
 \bottomrule
 \end{tabular*}
 \end{table}

Suppose we have a $d$-dimensional continuous cdf $G$ such that the corresponding $\AUK\ne1/2$. Due to consistency, see \eqref{eq.as->AUK}, and using the continuous mapping theorem, we have that $|\hAUK_n-1/2|\as|\AUK-1/2|\ne0$; therefore, the statistic $|z_n|$ diverges almost surely to infinity; thus, if the alternative hypothesis is $F=G$, the power of the test goes to 1. Consequently, the proposed $\hAUK_n$-based test for total independence is consistent against the set of alternatives of $d$-dimensional continuous cdfs with $\AUK\ne1/2$.

We now offer an alternative way to compute the standard deviation $\sigma_\varPi$. Recall that the true standard deviation $\sigma_\varPi$ can be computed exactly through the relations \eqref{eq.covergence(b)} and \eqref{eq.cov-function(a)}--\eqref{eq.cov-function(c)}. Since $\bU_j$s are uniformly distributed on the $d$-dimensional probability simplex, it is rather laborious to compute the exact value of the true standard deviation $\sigma_\varPi$ for general values of $d$; the difficulty is due to the calculations on the simplex. Thus, we provide a Monte Carlo based algorithm for estimating the standard deviation $\sigma_\varPi$.

\begin{algorithm}
\caption{Approximation of the standard deviation $\sigma_\varPi$.}
 \label{algorithm-sigmaPi}
\small
\begin{algorithmic}[1]
\State\label{algorithm-sigma2Pi1}
Let $d$ be the dimension of interest. Consider a $d$-dimensional continuous distribution with independent components; without loss of generality, consider the standard $d$-dimensional uniform distribution, say $\bU$, that is the distribution with density $f_{\bU}(\bu)=u_1\cdots u_d$, $\bu=(u_1,\dots,u_d)^T\in[0,1]^d$.

\State\label{algorithm-sigma2Pi2}
Let $n$ and $r$ be two large positive integer numbers. Generate $r$ independent random samples of size $n$ from $F_{\bU}$, say $\bU_{i,j}$, $i=1,\dots,n$ and $j=1,\dots,r$.

\State\label{algorithm-sigma2Pi3}
Based on the $j$th random sample, $\bU_{i,j}$, $i=1,\dots,n$, compute the $\hAUK_j$ and then $A_i\doteq\surd{n}(\hAUK_j-1/2)$, $j=1,\dots,r$.

\State\label{algorithm-sigma2Pi4}
Define $\h\sigma_{\vP}$ to be the sample standard deviation of $A_1,\dots,A_r$.
\end{algorithmic}
\end{algorithm}

Using Algorithm \ref{algorithm-sigmaPi} with $r=10000$ and $n=50000$, we offer estimations $\h\sigma_{\vP}$ for the cases $d=2,\dots,10$. Observe that in the case $d=2$ the estimated value $\h\sigma_{\vP}=0.20988$ is very closed to the true value $\sigma_{\vP}=(19/432)^{1/2}\approxeq0.20972$ ($\h\sigma_{\vP}/\sigma_{\vP}=1.000779$).

\begin{table}[htp]
 \caption{Mode Carlo estimate of $\sigma_{\varPi}$, using Algorithm \ref{algorithm-sigmaPi} with $r=10000$ and $n=50000$, when $d=2,\dots,10$.}
 \label{table.hsigma2Pi}
 \small
 \begin{tabular*}{\textwidth}
 {@{\hspace{0ex}}@{\extracolsep{\fill}}l@{\hspace{0ex}}c@{\hspace{0ex}}c@{\hspace{0ex}}c@{\hspace{0ex}}c@{\hspace{0ex}}c@{\hspace{0ex}}c@{\hspace{0ex}} c@{\hspace{0ex}}c@{\hspace{0ex}}c@{\hspace{0ex}}}
 \toprule
 $d$                 & 2 & 3 & 4 & 5 & 6 & 7 & 8 & 9 & 10 \\
 $\h\sigma_{\vP}$  & 0.20988 & 0.19383 & 0.16254 & 0.12511 & 0.09407 & 0.06853 & 0.04912 & 0.03395 & 0.02377 \\
 \bottomrule
 \end{tabular*}
 \end{table}

To make our test practical for small sample sizes $n$, we present an algorithm to approximate the percentiles of the distribution of $|z_n|$. Suppose $\bX_1,\ldots,\bX_n$ is a random sample of size $n$ from a $d$-dimensional continuous distribution $F$ and we are interested in testing the null hypothesis of total independence. We use the statistic $z_n$; and the rejection region of the null hypothesis at $\alpha$-significant level is $|z_n|>p_{d,n;1-\alpha}$, where $p_{d,n;1-\alpha}$ denote the $(1-\alpha)$-percentile point of the distribution of $|z_n|$. If the sample size $n$ is large, that is $n>\max\{1000,100d\}$, then $p_{d,n;1-\alpha}=p_{d,\infty;1-\alpha}=z_{\alpha/2}$ else we approximate this percentile point using Algorithm \ref{algorithm-percentiles}.

\begin{algorithm}
\caption{Approximation of the percentiles of the distribution of $|z_n|$ for small sample sizes.}
 \label{algorithm-percentiles}
\small
\begin{algorithmic}[1]
\State\label{algorithm-percentiles1}
Consider the standard $d$-dimensional uniform distribution, say $\bU$, that is the distribution with density $f_{\bU}(\bu)=u_1\cdots u_d$, $\bu=(u_1,\dots,u_d)^T\in[0,1]^d$.

\State\label{algorithm-percentiles2}
For a large positive integer $r$, generate $r$ independent random samples of size $n$ from $F_{\bU}$, say $\bU_{i,j}$, $i=1,\dots,n$ and $j=1,\dots,r$.

\State\label{algorithm-percentiles3}
Based on the $j$th random sample, $\bU_{i,j}$, $i=1,\dots,n$, compute the corresponding $|z_n|$-value, say $|z_{n;j}|$, $j=1,\dots,r$.

\State\label{algorithm-percentiles4}
Define $p_{d,n;1-\alpha}$ to be the $(1-\alpha)$-percentile point of the distribution of the values $|z_{n;1}|,\dots,|z_{n;r}|$.
\end{algorithmic}
\end{algorithm}

Notice that if the dimension $d$ is small, for example $d\le10$, and since $n\le1000$, the number of Monte Carlo repetitions $r=10^4$ is sufficient for an accurate approximation of $p_{d,n;1-\alpha}$ using Algorithm \ref{algorithm-percentiles} with low computational cost.

\begin{remark}
\label{rem.U,V-statistics}
For the estimation of $\AUK$ we used the relationship $\AUK=\E\left\{1-K_{\vP}(T)\right\}$. The study of the asymptotic behavior of this estimator, i.e.\ the estimator $\hAUK_{n}$ that is presented in Section \ref{sec.test}, as well as its limit distribution under the hypothesis of total independence go through the Kendall process and the functional \emph{delta}-method.  Another way for estimating the $\AUK$ is via use of the formula $\AUK=\Pr(T\le \vP)$; then, similar to \citet{PBSP2018}, a kernel-based estimation of $\AUK$ seems to be feasible and then the use of $U$- or $V$-statistics theory allows one to obtain the asymptotic distribution of the statistic.
\end{remark}

We now discuss the possibility of a general $\AUK$-based hypothesis test, i.e.\ $H_0\colon \AUK=a_0$ versus $H_1\colon \AUK\ne a_0$ for a fixed constant $a_0\in[2^{-d},1]$; until now we have only studied the case $a_0=1/2$. We first present the following example.
\begin{example}
\label{exm.C(0,1),normal}
Let $\bX=(X_1,X_2)^T$ be a bivariate rv uniformly distributed on the circumference of the circle centered at the origin with a radius of 1. The ordinary Kendall cdf of $\bX$ is $K_{\bX}(t)=\one{0\le t<1/2}(t+1/4)+\one{1/2\le t\le1}$ \citep[see][p.~426]{VAM2019} and so $\AUK_{\bX}=11/16-\ln(2)/4$. Consider now the bivariate normal rv $\bZ=(Z_1,Z_2)^T\sim \CN_2\left(\bzero,\left({1\atop-0.073622}~{-0.073622\atop1}\right)\right)$. A numerical computation gives $\AUK_{\bZ}=\AUK_{\bX}$. One can verify that hypotheses \ref{H1} and \ref{H2}, see Appendix \ref{app:Gn}, hold and so Theorem \ref{thm.Gn} also holds for the random vector $\bZ$. On the other hand, hypothesis \ref{H1} does not hold for the random vector $\bX$; hence, the statement of Theorem \ref{thm.Gn} is not true for this case.
\end{example}
In view of the above example, we conclude that it is not possible to construct general $\AUK$-based hypotheses tests, as these are described at the beginning  of the paragraph.

\section{Simulation study}
\label{sec.Simulation}

In this section, we present a simulation study designed with two specific goals in mind. First, we are interested in understanding the behavior of the proposed indices of dependence as a function of the sample size, the type and the degree of dependence that a data set exhibits.

Our second goal relates to the evaluation of the performance of the proposed tests of independence, in terms of level of significance and power of the tests. In what follows, we present evidence of this performance by simulating data from a range of distributions and hence, dependence structures.

\subsection{The estimators $\hI_n$ and $\hI^*_n$}
\label{ssec.sim.indices}

We first present a simulation study for the proposed empirical dependence indices.

\subsubsection{Normal case}
\label{sssec.sim.normal}

We simulate $r=1000$ random samples of size $n=100,200,500,1000$ from the trivariate normal $\CN_3(\bzero,\Sig_3(\brho))$ distribution (for the variance/covariance matrix $\Sig_3(\brho)$ see \eqref{eq.Sig(a)}), for various values of $n$ and $\brho=(\rho_{12},\rho_{13},\rho_{23})$. Based on these samples we compute the averages (and the Monte Carlo mean square error) of $\hI$ and $\hI^*$; Table \ref{table.Sim-N3} presents the results. In all cases the empirical mean square error of the indices tends to zero as $n$ goes to infinity, confirming the consistency of the estimation for both $\hI$ and $\hI^*$ estimators. Figure \ref{fig.N3Scatterplots} presents illustrations of graphs of a single sample $\{\bX_1,\ldots,\bX_n\}$ of size $n=1000$ drawn from these distributions. The standardized index $I^*$ indicates that, see Definition \ref{def.level}, the normal distributions with $(\rho_{12},\rho_{13},\rho_{23})\in\{(0,0,0),(-0.1,-0.1,0.2)\}$ have a weak dependence level, that the ones with $(\rho_{12},\rho_{13},\rho_{23})\in\{(-0.3,-0.3,-0.3),(0.2,0.3,0.4)\}$ have a mild dependence level, while the distributions with $(\rho_{12},\rho_{13},\rho_{23})\in\{(0.7,0.5,0),(0.2,-0.8,0)\}$ have a strong dependence level, and that normal distributions with $(\rho_{12},\rho_{13},\rho_{23})\in\{(1,1,1),(-0.5,-0.5,-0.5),(0.1,0.2,-0.9)\}$ have a very strong dependence level. On the other hand, the case $(\rho_{12},\rho_{13},\rho_{23})=(-0.5,-0.5,0.5)$ lies in the frontier between mild and strong dependence level.

\begin{table*}[htp]
 \centering{
 \caption[Monte Carlo averages (and associated mean square error) of $\hI_n$ and $\hI^*_n$; the number of Monte Carlo repetitions is $r=1000$. Random samples of size $n$ are drawn from different trivariate normal distributions, for various values of $n$.]{Monte Carlo averages (and associated mean square error) of $I$ and $I^*$; the number of Monte Carlo repetitions is $r=1000$. Random samples of size $n$ are drawn from different $\CN_3\left(\bzero,\Sig_3(\brho)\right)$ distributions, for various values of $n$ and $\brho=(\rho_{12},\rho_{13},\rho_{23})$.}
 \label{table.Sim-N3}
 \footnotesize
\begin{subtable}{.47\textwidth}
 \caption{$(\rho_{12},\rho_{13},\rho_{23})=(0,0,0)$; true values of indices: $I=0$ and $I^*=0$.}
 \label{subtable.normal000}
 \scriptsize
 \begin{tabular*}{\textwidth}
 {@{\hspace{0ex}}@{\extracolsep{\fill}}l@{\hspace{0ex}}c@{\hspace{0ex}}c@{\hspace{0ex}}c@{\hspace{0ex}}c@{\hspace{0ex}}}
 \addlinespace
 \toprule
 & \multicolumn{4}{c}{$n$}\\
 \cmidrule{2-5}
 & 100 & $200$ & $500$ & $1000$ \\
 \midrule
 $\hI_n$   & .097(.009) & .059(.004) & .032(.001) & .022(.000) \\
 $\hI^*_n$ & .186(.035) & .109(.012) & .057(.003) & .037(.001) \\
 \bottomrule
 \end{tabular*}
\end{subtable}
\hfill
\begin{subtable}{.47\textwidth}
 \caption{$(\rho_{12},\rho_{13},\rho_{23})=(1,1,1)$; true values of indices: $I=1$ and $I^*=1$.}
 \label{subtable.normal111}
 \scriptsize
 \begin{tabular*}{\textwidth}
 {@{\hspace{0ex}}@{\extracolsep{\fill}}l@{\hspace{0ex}}c@{\hspace{0ex}}c@{\hspace{0ex}}c@{\hspace{0ex}}c@{\hspace{0ex}}}
 \addlinespace
 \toprule
 & \multicolumn{4}{c}{$n$}\\
 \cmidrule{2-5}
 & 100 & $200$ & $500$ & $1000$ \\
 \midrule
 $\hI_n$   & .739(.068) & .833(.028) & .912(.008) & .947(.003) \\
 $\hI^*_n$ & .985(.000) & .994(.000) & .998(.000) & .999(.000) \\
 \bottomrule
 \end{tabular*}
\end{subtable}
\bigskip\linebreak
\begin{subtable}{.47\textwidth}
 \caption{$(\rho_{12},\rho_{13},\rho_{23})=(-0.5,-0.5,0.5)$; true values of indices: $I=0.243$ and $I^*=0.5$.}
 \label{subtable.normal-.45-.45.45}
 \scriptsize
 \begin{tabular*}{\textwidth}
 {@{\hspace{0ex}}@{\extracolsep{\fill}}l@{\hspace{0ex}}c@{\hspace{0ex}}c@{\hspace{0ex}}c@{\hspace{0ex}}c@{\hspace{0ex}}}
 \addlinespace
 \toprule
 & \multicolumn{4}{c}{$n$}\\
 \cmidrule{2-5}
 & 100 & $200$ & $500$ & $1000$ \\
 \midrule
 $\hI_n$   & .222(.001) & .229(.000) & .236(.000) & .239(.000) \\
 $\hI^*_n$ & .459(.004) & .472(.002) & .487(.001) & .493(.000) \\
 \bottomrule
 \end{tabular*}
\end{subtable}
\hfill
\begin{subtable}{.47\textwidth}
 \caption{$(\rho_{12},\rho_{13},\rho_{23})=(-0.5,-0.5,-0.5)$; true values of indices: $I=0.554$ and $I^*=0.92$.}
 \label{subtable.normal-.45-.45-.45}
 \scriptsize
 \begin{tabular*}{\textwidth}
 {@{\hspace{0ex}}@{\extracolsep{\fill}}l@{\hspace{0ex}}c@{\hspace{0ex}}c@{\hspace{0ex}}c@{\hspace{0ex}}c@{\hspace{0ex}}}
 \addlinespace
 \toprule
 & \multicolumn{4}{c}{$n$}\\
 \cmidrule{2-5}
 & 100 & $200$ & $500$ & $1000$ \\
 \midrule
 $\hI_n$   & .418(.019) & .464(.008) & 0.505(.002) & 0.525(.001) \\
 $\hI^*_n$ & .794(.016) & .846(.006) & 0.885(.001) & 0.900(.000) \\
 \bottomrule
 \end{tabular*}
\end{subtable}
\bigskip\linebreak
\begin{subtable}{.47\textwidth}
 \caption{$(\rho_{12},\rho_{13},\rho_{23})=(0.10,0.20,-0.90)$; true values of indices: $I=0.461$ and $I^*=0.842$.}
 \label{subtable.normal.1.2-.9}
 \scriptsize
 \begin{tabular*}{\textwidth}
 {@{\hspace{0ex}}@{\extracolsep{\fill}}l@{\hspace{0ex}}c@{\hspace{0ex}}c@{\hspace{0ex}}c@{\hspace{0ex}}c@{\hspace{0ex}}}
 \addlinespace
 \toprule
 & \multicolumn{4}{c}{$n$}\\
 \cmidrule{2-5}
 & 100 & $200$ & $500$ & $1000$ \\
 \midrule
 $\hI_n$   & .382(.006) & .413(.002) & .438(.000) & .449(.000) \\
 $\hI^*_n$ & .745(.010) & .787(.003) & .818(.000) & .829(.000) \\
 \bottomrule
 \end{tabular*}
\end{subtable}
\hfill
\begin{subtable}{.47\textwidth}
 \caption{$(\rho_{12},\rho_{13},\rho_{23})=(0.70,0.50,0)$; true values of indices: $I=0.363$ and $I^*=0.717$.}
 \label{subtable.normal.7.50}
 \scriptsize
 \begin{tabular*}{\textwidth}
 {@{\hspace{0ex}}@{\extracolsep{\fill}}l@{\hspace{0ex}}c@{\hspace{0ex}}c@{\hspace{0ex}}c@{\hspace{0ex}}c@{\hspace{0ex}}}
 \addlinespace
 \toprule
 & \multicolumn{4}{c}{$n$}\\
 \cmidrule{2-5}
 & 100 & $200$ & $500$ & $1000$ \\
 \midrule
 $\hI_n$   & .311(.003) & .330(.001) & .348(.000) & .355(.000) \\
 $\hI^*_n$ & .631(.008) & .664(.003) & .693(.001) & .704(.000) \\
 \bottomrule
 \end{tabular*}
\end{subtable}
\bigskip\linebreak
\begin{subtable}{.47\textwidth}
 \caption{$(\rho_{12},\rho_{13},\rho_{23})=(0.20,-0.80,0)$; true values of indices: $I=0.334$ and $I^*=0.671$.}
 \label{subtable.normal.2-.80}
 \scriptsize
 \begin{tabular*}{\textwidth}
 {@{\hspace{0ex}}@{\extracolsep{\fill}}l@{\hspace{0ex}}c@{\hspace{0ex}}c@{\hspace{0ex}}c@{\hspace{0ex}}c@{\hspace{0ex}}}
 \addlinespace
 \toprule
 & \multicolumn{4}{c}{$n$}\\
 \cmidrule{2-5}
 & 100 & $200$ & $500$ & $1000$ \\
 \midrule
 $\hI_n$   & .291(.002) & .307(.001) & .322(.000) & .327(.000) \\
 $\hI^*_n$ & .594(.007) & .624(.003) & .651(.001) & .659(.000) \\
 \bottomrule
 \end{tabular*}
\end{subtable}
\hfill
\begin{subtable}{.47\textwidth}
 \caption{$(\rho_{12},\rho_{13},\rho_{23})=(-0.30,-0.30,-0.30)$; true values of indices: $I=0.204$ and $I^*=0.420$.}
 \label{subtable.normal-.3-.3-.3}
 \scriptsize
 \begin{tabular*}{\textwidth}
 {@{\hspace{0ex}}@{\extracolsep{\fill}}l@{\hspace{0ex}}c@{\hspace{0ex}}c@{\hspace{0ex}}c@{\hspace{0ex}}c@{\hspace{0ex}}}
 \addlinespace
 \toprule
 & \multicolumn{4}{c}{$n$}\\
 \cmidrule{2-5}
 & 100 & $200$ & $500$ & $1000$ \\
 \midrule
 $\hI_n$   & .192(.001) & .192(.000) & .197(.000) & .200(.000) \\
 $\hI^*_n$ & .395(.003) & .395(.002) & .406(.001) & .412(.000) \\
 \bottomrule
 \end{tabular*}
\end{subtable}
\bigskip\linebreak
\begin{subtable}{.47\textwidth}
 \caption{$(\rho_{12},\rho_{13},\rho_{23})=(0.20,0.30,0.40)$; true values of indices: $I=0.159$ and $I^*=0.324$.}
 \label{subtable.normal.2.3.4}
 \scriptsize
 \begin{tabular*}{\textwidth}
 {@{\hspace{0ex}}@{\extracolsep{\fill}}l@{\hspace{0ex}}c@{\hspace{0ex}}c@{\hspace{0ex}}c@{\hspace{0ex}}c@{\hspace{0ex}}}
 \addlinespace
 \toprule
 & \multicolumn{4}{c}{$n$}\\
 \cmidrule{2-5}
 & 100 & $200$ & $500$ & $1000$ \\
 \midrule
 $\hI_n$   & .165(.001) & .156(.000) & .156(.000) & .157(.000) \\
 $\hI^*_n$ & .336(.003) & .316(.002) & .317(.001) & .319(.000) \\
 \bottomrule
 \end{tabular*}
\end{subtable}
\hfill
\begin{subtable}{.47\textwidth}
 \caption{$(\rho_{12},\rho_{13},\rho_{23})=(-0.10,-0.10,0.20)$; true values of indices: $I=0.074$ and $I^*=0.138$.}
 \label{subtable.normal-.1-.1.2}
 \scriptsize
 \begin{tabular*}{\textwidth}
 {@{\hspace{0ex}}@{\extracolsep{\fill}}l@{\hspace{0ex}}c@{\hspace{0ex}}c@{\hspace{0ex}}c@{\hspace{0ex}}c@{\hspace{0ex}}}
 \addlinespace
 \toprule
 & \multicolumn{4}{c}{$n$}\\
 \cmidrule{2-5}
 & 100 & $200$ & $500$ & $1000$ \\
 \midrule
 $\hI_n$   & .114(.002) & .089(.000) & .077(.000) & .075(.000) \\
 $\hI^*_n$ & .224(.009) & .170(.002) & .146(.001) & .141(.000) \\
 \bottomrule
 \end{tabular*}
\end{subtable}
}
 \end{table*}

\begin{figure}[htp]
\begin{subfigure}[a]{.24\textwidth}
\FIG
{\resizebox{\linewidth}{!}{
\begin{tikzpicture}
\begin{axis}[view={30}{20}]
\addplot3+[draw=none, mark=o,color=blue,mark size=1] table {normal5-5-5.dat};
\end{axis}
\end{tikzpicture}
}}
{\includegraphics[width=\textwidth]{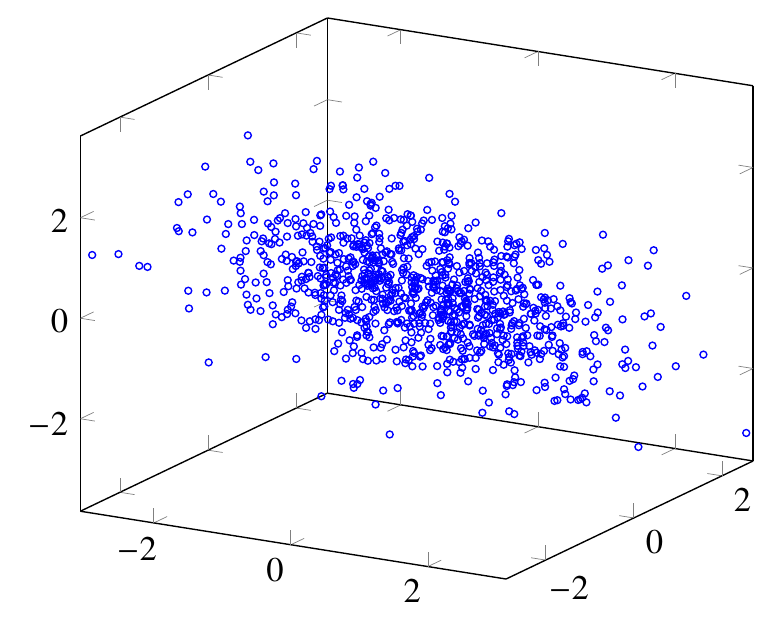}}
\caption{$(-0.5,-0.5,0.5)$.}
\label{sfig.normal-5-55}
\end{subfigure}
\hfill
\begin{subfigure}[a]{.24\textwidth}
\FIG
{\resizebox{\linewidth}{!}{
\begin{tikzpicture}
\begin{axis}[view={30}{20}]
\addplot3+[draw=none, mark=o,color=blue,mark size=1] table {normal-5-5-5.dat};
\end{axis}
\end{tikzpicture}
}}
{\includegraphics[width=\textwidth]{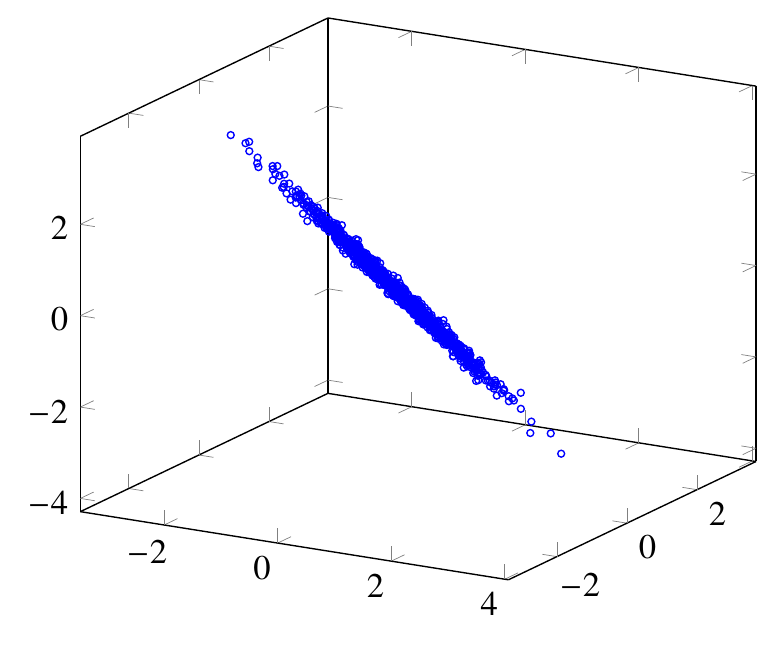}}
\caption{$(-0.5,-0.5,-0.5)$.}
\label{sfig.normal-5-5-5}
\end{subfigure}
\hfill
\begin{subfigure}[a]{.24\textwidth}
\FIG
{\resizebox{\linewidth}{!}{
\begin{tikzpicture}
\begin{axis}[view={30}{20}]
\addplot3+[draw=none, mark=o,color=blue,mark size=1] table {normal12-9.dat};
\end{axis}
\end{tikzpicture}
}}
{\includegraphics[width=\textwidth]{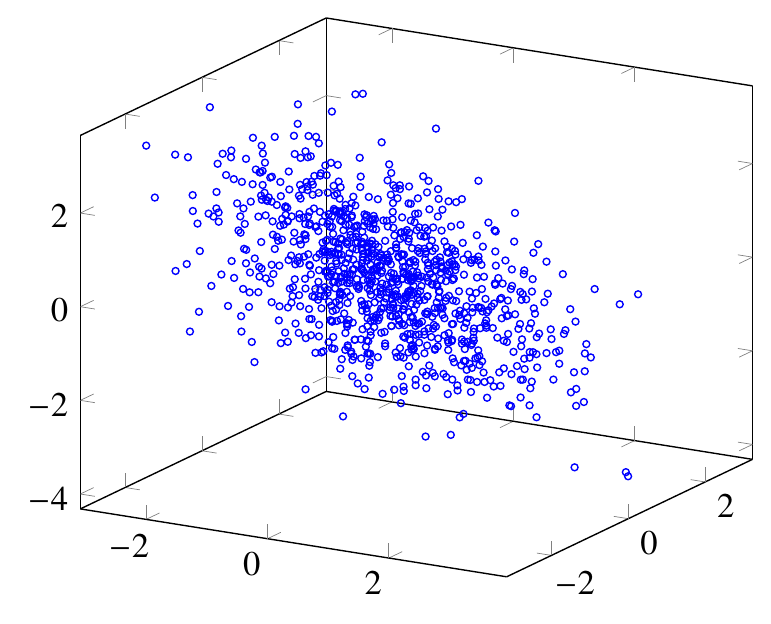}}
\caption{$(0.1,0.1,-0.9)$.}
\label{sfig.normal12-9}
\end{subfigure}
\hfill
\begin{subfigure}[a]{.24\textwidth}
\FIG
{\resizebox{\linewidth}{!}{
\begin{tikzpicture}
\begin{axis}[view={30}{20}]
\addplot3+[draw=none, mark=o,color=blue,mark size=1] table {normal750.dat};
\end{axis}
\end{tikzpicture}
}}
{\includegraphics[width=\textwidth]{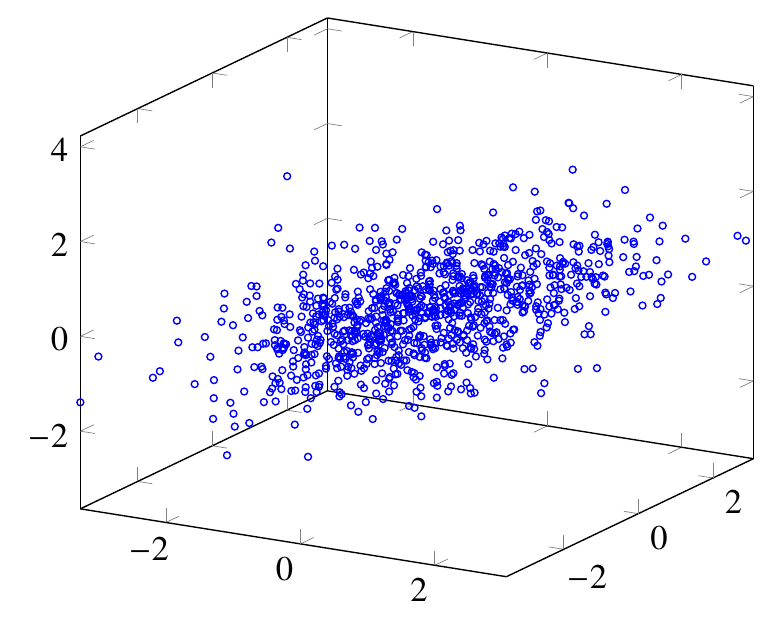}}
\caption{$(0.7,0.5,0)$.}
\label{sfig.normal750}
\end{subfigure}
\medskip\linebreak
\begin{subfigure}[a]{.24\textwidth}
\FIG
{\resizebox{\linewidth}{!}{
\begin{tikzpicture}
\begin{axis}[view={30}{20}]
\addplot3+[draw=none, mark=o,color=blue,mark size=1] table {normal2-80.dat};
\end{axis}
\end{tikzpicture}
}}
{\includegraphics[width=\textwidth]{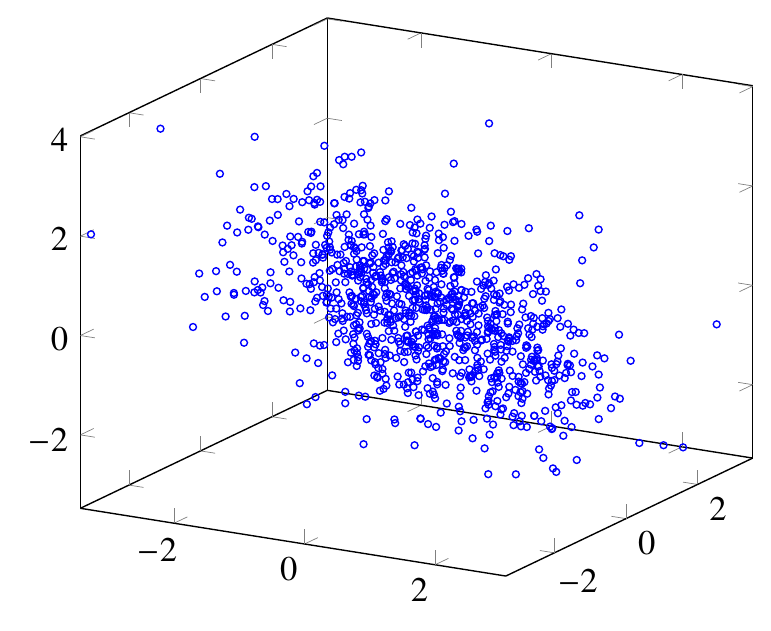}}
\caption{$(0.2,-0.8,0)$.}
\label{sfig.normal2-80}
\end{subfigure}
\hfill
\begin{subfigure}[a]{.24\textwidth}
\FIG
{\resizebox{\linewidth}{!}{
\begin{tikzpicture}
\begin{axis}[view={30}{20}]
\addplot3+[draw=none, mark=o,color=blue,mark size=1] table {normal-3-3-3.dat};
\end{axis}
\end{tikzpicture}
}}
{\includegraphics[width=\textwidth]{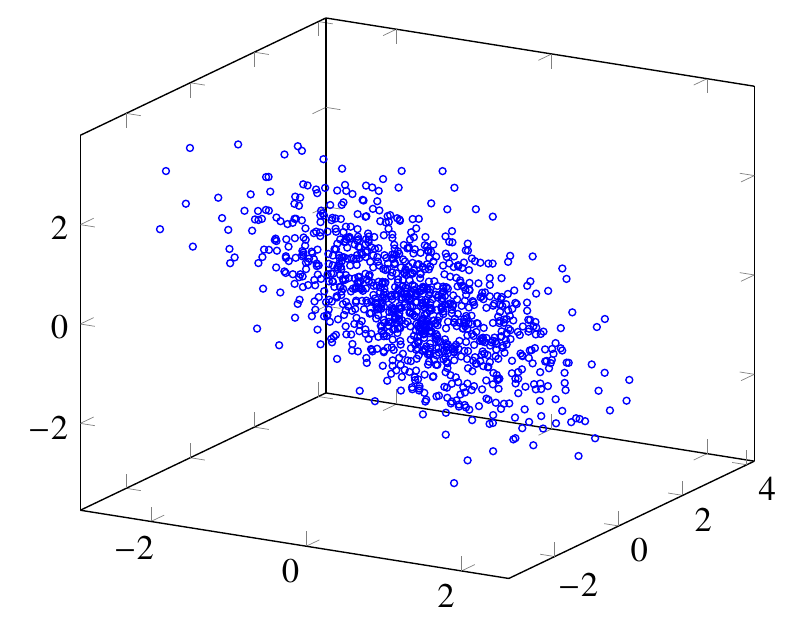}}
\caption{$(-0.3,-0.3,-0.3)$.}
\label{sfig.normal-3-3-3}
\end{subfigure}
\hfill
\begin{subfigure}[a]{.24\textwidth}
\FIG
{\resizebox{\linewidth}{!}{
\begin{tikzpicture}
\begin{axis}[view={30}{20}]
\addplot3+[draw=none, mark=o,color=blue,mark size=1] table {normal234.dat};
\end{axis}
\end{tikzpicture}
}}
{\includegraphics[width=\textwidth]{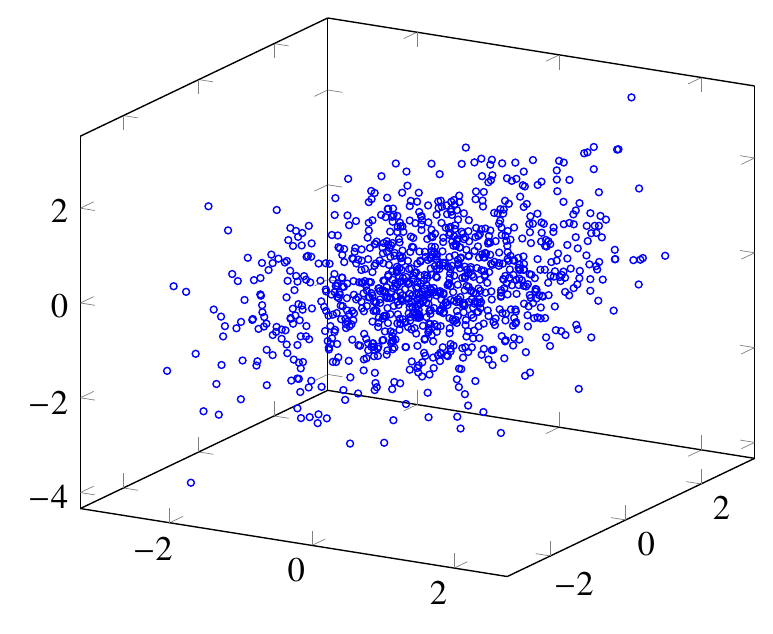}}
\caption{$(0.2,0.3,0.4)$.}
\label{sfig.normal12-9}
\end{subfigure}
\hfill
\begin{subfigure}[a]{.24\textwidth}
\FIG
{\resizebox{\linewidth}{!}{
\begin{tikzpicture}
\begin{axis}[view={30}{20}]
\addplot3+[draw=none, mark=o,color=blue,mark size=1] table {normal-1-12.dat};
\end{axis}
\end{tikzpicture}
}}
{\includegraphics[width=\textwidth]{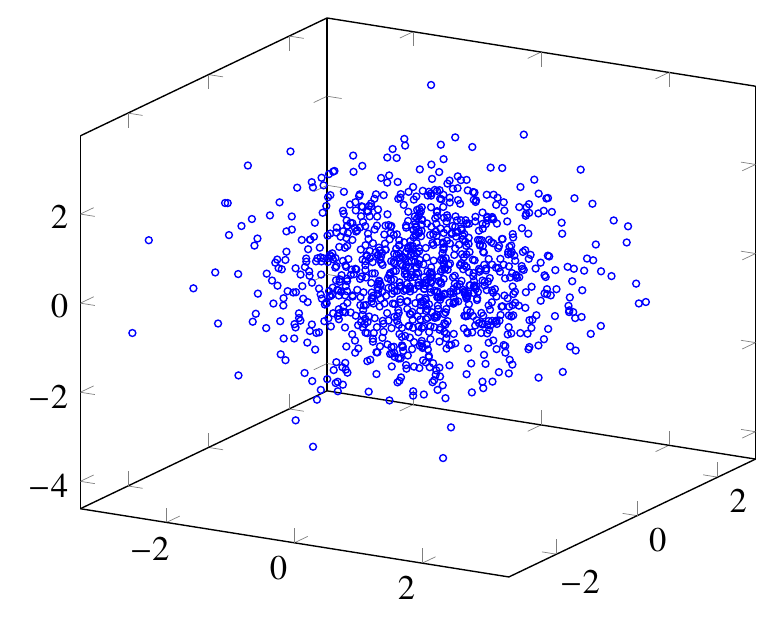}}
\caption{$(-0.1,-0.1,0.2)$.}
\label{sfig.normal-1-12}
\end{subfigure}
\caption[Scatterplots: Random samples of size $n=1000$ are drawn from the the trivariate normal distributions]{Scatterplots: Random samples of size $n=1000$ are drawn from the the trivariate normal distributions $\CN_3(\bzero,\Sig_3(\brho))$ for various correlation values of $\brho=(\rho_{12},\rho_{13},\rho_{23})$.}
\label{fig.N3Scatterplots}
\end{figure}

\subsubsection{Non-normal cases}
\label{sssec.sim.non-normal}

We simulate $r=1000$ random samples of size $n$ from some trivariate Archime\-dean copulas (Clayton, Frank, Gumbel and Joe) as well the non-Archimedean copula Farlie-Gumbel-Morgenstern ($\FGM$). For the parameters of these Archimedean copulas see the \emph{R}-package \href{http://copula.r-forge.r-project.org/}{{`copula'}} \citep[see, also,][]{BS2013}; regarding data generation from the $\FGM$ copula see Section \ref{sm:FGM} of the Supplementary Material. Based on these samples we compute the averages (and the Monte Carlo mean square error) of $\hI$ and $\hI^*$; Tables \ref{table.Sim-ArchmCop} and \ref{table.Sim-FGM} present the results for the Archimedean and F-G-M copula cases respectively; note that the copulas $C_{\theta}$ or $C_{-\theta}$, $\theta\in[0,1]$, see \eqref{eq.F-G-M(a)}, produce the same value of the index $I$, and so the index $I^*$ [the same is true for the copulas $\wC_{\theta}$ or $\wC_{-\theta}$, $\theta\in[0,1]$, see \eqref{eq.F-G-M(a)}]. For all cases the results again empirically verify the consistency of the estimation for both $\hI$ and $\hI^*$ estimators. Figure \ref{fig.N3Scatterplots} presents illustrations of graphs of a single sample $\{\bX_1,\ldots,\bX_n\}$ of size $n=1000$ drawn from these distributions. The standardized index $I^*$ indicates that the Archimedean copulas $\Clayton\{2\}$, $\Frank\{4\}$, $\Gumbel\{2\}$ and $\Joe\{2\}$ have a strong dependence level, while those corresponding to $\Clayton\{5\}$, $\Frank\{8\}$, $\Gumbel\{4\}$ and $\Joe\{5\}$ have a very strong dependence level.

\begin{table*}[htp]
 \centering{
 \caption[Monte Carlo averages (and associated mean square error) of $\hI_n$ and $\hI^*_n$; the number of Monte Carlo repetitions is $r=1000$. Random samples of size $n$ are drawn from different trivariate Archimedean copulas, for various values of $n$.]{Monte Carlo averages (and associated mean square error) of $I$ and $I^*$; the number of Monte Carlo repetitions is $r=1000$. Random samples of size $n$ are drawn from different Clayton, Frank, Gumbel and Joe Archimedean copulas, for various values of $n$. For each case, the parameter is shown in curly brackets, and the true values of $I$ and $I^*$ are denoted in the caption.}
 \label{table.Sim-ArchmCop}
 \footnotesize
\begin{subtable}{.47\textwidth}
 \caption{$\Clayton\{2\}$; $I=0.362$, $I^*=0.715$.}
 \label{subtable.Clayton2}
 \scriptsize
 \begin{tabular*}{\textwidth}
 {@{\hspace{0ex}}@{\extracolsep{\fill}}l@{\hspace{0ex}}c@{\hspace{0ex}}c@{\hspace{0ex}}c@{\hspace{0ex}}c@{\hspace{0ex}}}
 \addlinespace
 \toprule
 & \multicolumn{4}{c}{$n$}\\
 \cmidrule{2-5}
 & 100 & $200$ & $500$ & $1000$ \\
 \midrule
 $\hI_n$   & .315(.003) & .332(.001) & .348(.000) & .354(.000) \\
 $\hI^*_n$ & .637(.008) & .666(.003) & .694(.001) & .703(.000) \\
 \bottomrule
 \end{tabular*}
\end{subtable}
\hfill
\begin{subtable}{.47\textwidth}
 \caption{$\Clayton\{5\}$; $I=0.528$, $I^*=0.903$.}
 \label{subtable.Clayton5}
 \scriptsize
 \begin{tabular*}{\textwidth}
 {@{\hspace{0ex}}@{\extracolsep{\fill}}l@{\hspace{0ex}}c@{\hspace{0ex}}c@{\hspace{0ex}}c@{\hspace{0ex}}c@{\hspace{0ex}}}
 \addlinespace
 \toprule
 & \multicolumn{4}{c}{$n$}\\
 \cmidrule{2-5}
 & 100 & $200$ & $500$ & $1000$ \\
 \midrule
 $\hI_n$   & .442(.008) & .479(.003) & .507(.001) & 0.517(.000) \\
 $\hI^*_n$ & .821(.007) & .860(.002) & .886(.000) & 0.893(.000) \\
 \bottomrule
 \end{tabular*}
\end{subtable}
\bigskip\linebreak
\begin{subtable}{.47\textwidth}
 \caption{$\Frank\{4\}$; $I=0.263$, $I^*=0.543$.}
 \label{subtable.Frank4}
 \scriptsize
 \begin{tabular*}{\textwidth}
 {@{\hspace{0ex}}@{\extracolsep{\fill}}l@{\hspace{0ex}}c@{\hspace{0ex}}c@{\hspace{0ex}}c@{\hspace{0ex}}c@{\hspace{0ex}}}
 \addlinespace
 \toprule
 & \multicolumn{4}{c}{$n$}\\
 \cmidrule{2-5}
 & 100 & $200$ & $500$ & $1000$ \\
 \midrule
 $\hI_n$   & .241(.001) & .247(.001) & .255(.000) & .260(.000) \\
 $\hI^*_n$ & .497(.000) & .510(.002) & .527(.001) & .535(.000) \\
 \bottomrule
 \end{tabular*}
\end{subtable}
\hfill
\begin{subtable}{.47\textwidth}
 \caption{$\Frank\{8\}$; $I=0.410$, $I^*=0.783$.}
 \label{subtable.Frank8}
 \scriptsize
 \begin{tabular*}{\textwidth}
 {@{\hspace{0ex}}@{\extracolsep{\fill}}l@{\hspace{0ex}}c@{\hspace{0ex}}c@{\hspace{0ex}}c@{\hspace{0ex}}c@{\hspace{0ex}}}
 \addlinespace
 \toprule
 & \multicolumn{4}{c}{$n$}\\
 \cmidrule{2-5}
 & 100 & $200$ & $500$ & $1000$ \\
 \midrule
 $\hI_n$   & .354(.004) & .378(.001) & .396(.000) & .403(.000) \\
 $\hI^*_n$ & .702(.007) & .739(.002) & .764(.001) & .774(.000) \\
 \bottomrule
 \end{tabular*}
\end{subtable}
\bigskip\linebreak
\begin{subtable}{.47\textwidth}
 \caption{$\Gumbel\{2\}$; $I=0.361$, $I^*=0.714$.}
 \label{subtable.Gumbel2}
 \scriptsize
 \begin{tabular*}{\textwidth}
 {@{\hspace{0ex}}@{\extracolsep{\fill}}l@{\hspace{0ex}}c@{\hspace{0ex}}c@{\hspace{0ex}}c@{\hspace{0ex}}c@{\hspace{0ex}}}
 \addlinespace
 \toprule
 & \multicolumn{4}{c}{$n$}\\
 \cmidrule{2-5}
 & 100 & $200$ & $500$ & $1000$ \\
 \midrule
 $\hI_n$   & .316(.003) & .333(.001) & .348(.000) & .354(.000) \\
 $\hI^*_n$ & .638(.007) & .668(.003) & .693(.001) & .703(.000) \\
 \bottomrule
 \end{tabular*}
\end{subtable}
\hfill
\begin{subtable}{.47\textwidth}
 \caption{$\Gumbel\{4\}$; $I=0.562$, $I^*=0.925$.}
 \label{subtable.Gumbel4}
 \scriptsize
 \begin{tabular*}{\textwidth}
 {@{\hspace{0ex}}@{\extracolsep{\fill}}l@{\hspace{0ex}}c@{\hspace{0ex}}c@{\hspace{0ex}}c@{\hspace{0ex}}c@{\hspace{0ex}}}
 \addlinespace
 \toprule
 & \multicolumn{4}{c}{$n$}\\
 \cmidrule{2-5}
 & 100 & $200$ & $500$ & $1000$ \\
 \midrule
 $\hI_n$   & .471(.008) & .509(.003) & .539(.001) & .550(.000) \\
 $\hI^*_n$ & .853(.006) & .887(.002) & .910(.000) & .918(.000) \\
 \bottomrule
 \end{tabular*}
\end{subtable}
\bigskip\linebreak
\begin{subtable}{.47\textwidth}
 \caption{$\Joe\{2\}$; $I=0.267$, $I^*=0.549$.}
 \label{subtable.Joe2}
 \scriptsize
 \begin{tabular*}{\textwidth}
 {@{\hspace{0ex}}@{\extracolsep{\fill}}l@{\hspace{0ex}}c@{\hspace{0ex}}c@{\hspace{0ex}}c@{\hspace{0ex}}c@{\hspace{0ex}}}
 \addlinespace
 \toprule
 & \multicolumn{4}{c}{$n$}\\
 \cmidrule{2-5}
 & 100 & $200$ & $500$ & $1000$ \\
 \midrule
 $\hI_n$   & .240(.001) & .248(.001) & .258(.000) & .262(.000) \\
 $\hI^*_n$ & .495(.005) & .512(.003) & .532(.001) & .540(.000) \\
 \bottomrule
 \end{tabular*}
\end{subtable}
\hfill
\begin{subtable}{.47\textwidth}
 \caption{$\Joe\{5\}$; $I=0.496$, $I^*=0.876$.}
 \label{subtable.Joe5}
 \scriptsize
 \begin{tabular*}{\textwidth}
 {@{\hspace{0ex}}@{\extracolsep{\fill}}l@{\hspace{0ex}}c@{\hspace{0ex}}c@{\hspace{0ex}}c@{\hspace{0ex}}c@{\hspace{0ex}}}
 \addlinespace
 \toprule
 & \multicolumn{4}{c}{$n$}\\
 \cmidrule{2-5}
 & 100 & $200$ & $500$ & $1000$ \\
 \midrule
 $\hI_n$   & .416(.007) & .449(.002) & .475(.001) & .484(.000) \\
 $\hI^*_n$ & .790(.008) & .829(.003) & .857(.001) & .866(.000) \\
 \bottomrule
 \end{tabular*}
\end{subtable}
}
 \end{table*}

\begin{figure}[htp]
\begin{subfigure}[a]{.24\textwidth}
\FIG
{\resizebox{\linewidth}{!}{
\begin{tikzpicture}
\begin{axis}[view={30}{20}]
\addplot3+[draw=none, mark=o,color=blue,mark size=1] table {clayton2.dat};
\end{axis}
\end{tikzpicture}
}}
{\includegraphics[width=\textwidth]{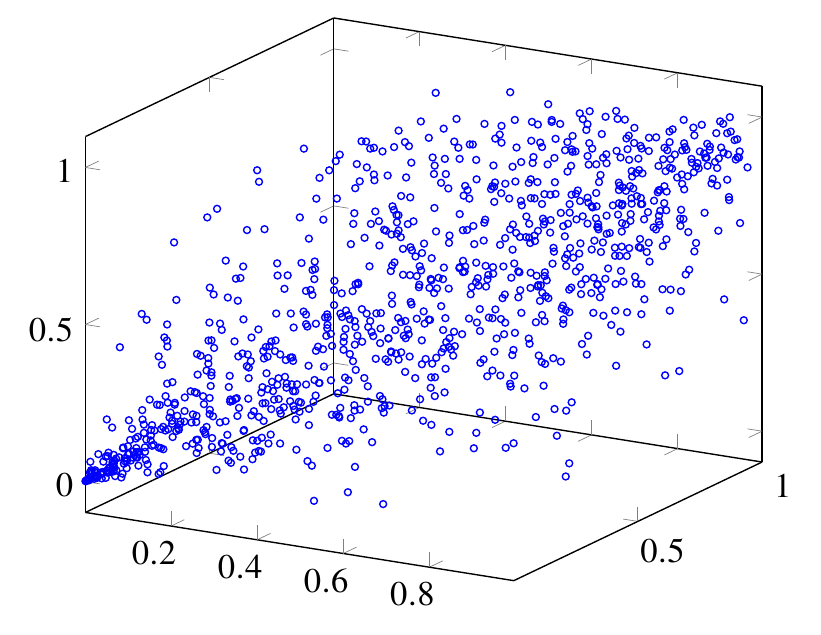}}
\caption{$\Clayton\{2\}$.}
\label{sfig.clayton2}
\end{subfigure}
\hfill
\begin{subfigure}[a]{.24\textwidth}
\FIG
{\resizebox{\linewidth}{!}{
\begin{tikzpicture}
\begin{axis}[view={30}{20}]
\addplot3+[draw=none, mark=o,color=blue,mark size=1] table {clayton5.dat};
\end{axis}
\end{tikzpicture}
}}
{\includegraphics[width=\textwidth]{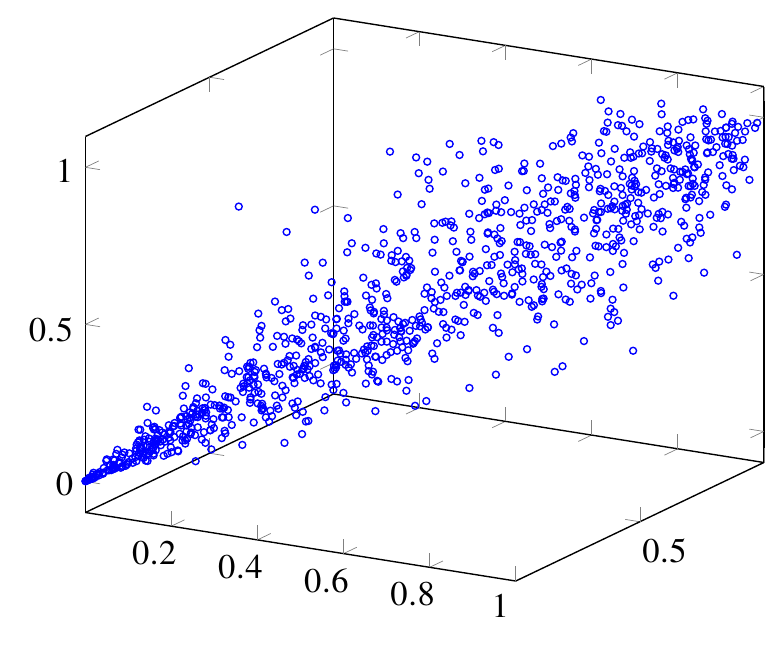}}
\caption{$\Clayton\{5\}$.}
\label{sfig.clayton5}
\end{subfigure}
\hfill
\begin{subfigure}[a]{.24\textwidth}
\FIG
{\resizebox{\linewidth}{!}{
\begin{tikzpicture}
\begin{axis}[view={30}{20}]
\addplot3+[draw=none, mark=o,color=blue,mark size=1] table {frank4.dat};
\end{axis}
\end{tikzpicture}
}}
{\includegraphics[width=\textwidth]{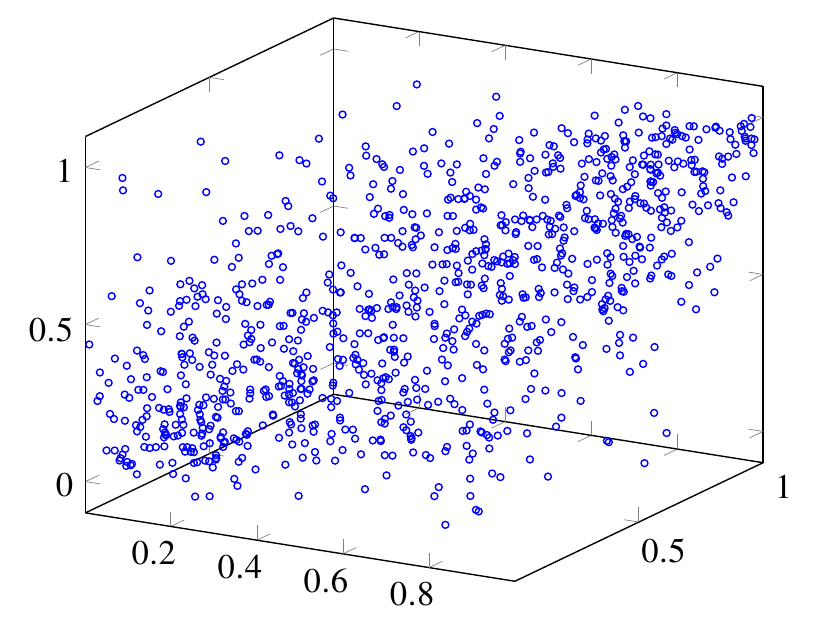}}
\caption{$\Frank\{4\}$.}
\label{sfig.frank4}
\end{subfigure}
\hfill
\begin{subfigure}[a]{.24\textwidth}
\FIG
{\resizebox{\linewidth}{!}{
\begin{tikzpicture}
\begin{axis}[view={30}{20}]
\addplot3+[draw=none, mark=o,color=blue,mark size=1] table {frank8.dat};
\end{axis}
\end{tikzpicture}
}}
{\includegraphics[width=\textwidth]{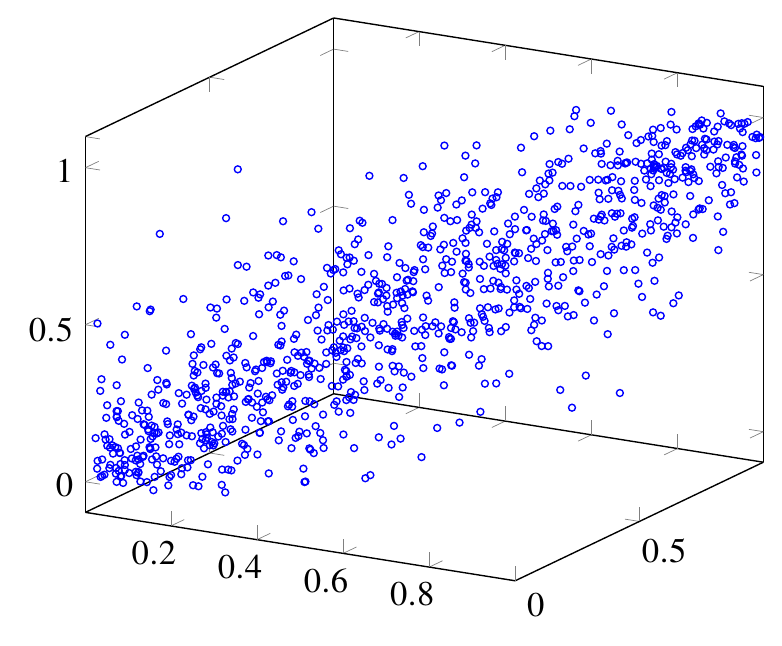}}
\caption{$\Frank\{8\}$.}
\label{sfig.frank8}
\end{subfigure}
\medskip\linebreak
\begin{subfigure}[a]{.24\textwidth}
\FIG
{\resizebox{\linewidth}{!}{
\begin{tikzpicture}
\begin{axis}[view={30}{20}]
\addplot3+[draw=none, mark=o,color=blue,mark size=1] table {gumbel2.dat};
\end{axis}
\end{tikzpicture}
}}
{\includegraphics[width=\textwidth]{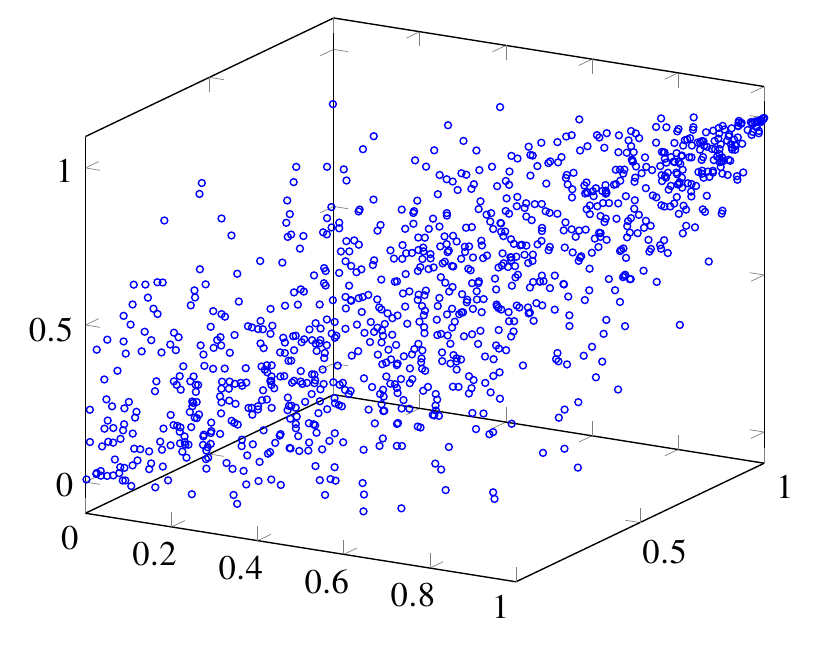}}
\caption{$\Gumbel\{2\}$.}
\label{sfig.gumbel2}
\end{subfigure}
\hfill
\begin{subfigure}[a]{.24\textwidth}
\FIG
{\resizebox{\linewidth}{!}{
\begin{tikzpicture}
\begin{axis}[view={30}{20}]
\addplot3+[draw=none, mark=o,color=blue,mark size=1] table {gumbel4.dat};
\end{axis}
\end{tikzpicture}
}}
{\includegraphics[width=\textwidth]{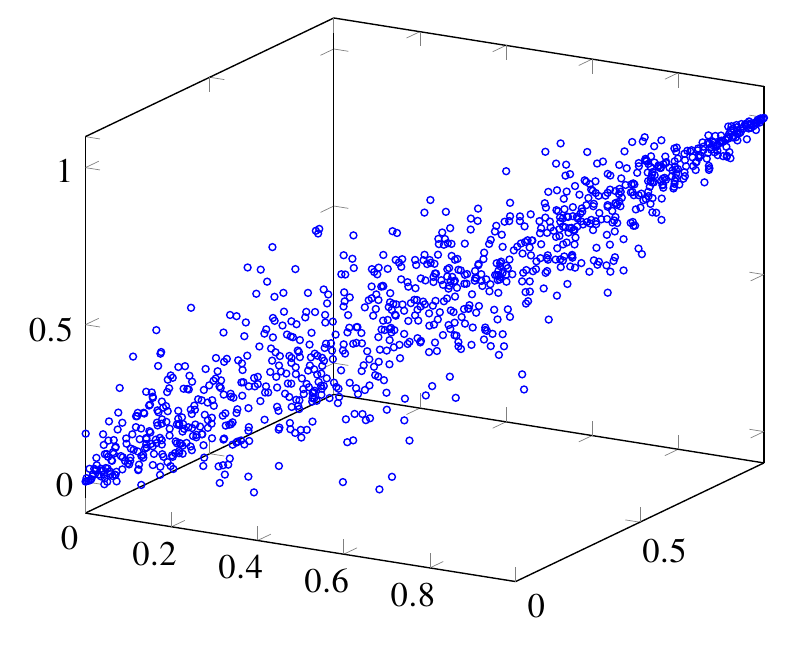}}
\caption{$\Gumbel\{4\}$.}
\label{sfig.gumbel4}
\end{subfigure}
\hfill
\begin{subfigure}[a]{.24\textwidth}
\FIG
{\resizebox{\linewidth}{!}{
\begin{tikzpicture}
\begin{axis}[view={30}{20}]
\addplot3+[draw=none, mark=o,color=blue,mark size=1] table {joe2.dat};
\end{axis}
\end{tikzpicture}
}}
{\includegraphics[width=\textwidth]{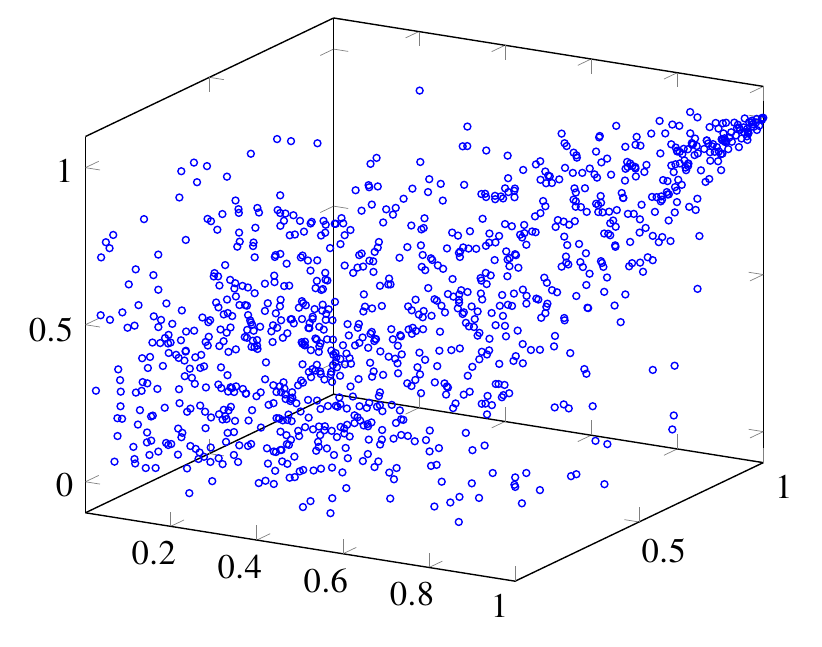}}
\caption{$\Joe\{2\}$.}
\label{sfig.joe2}
\end{subfigure}
\hfill
\begin{subfigure}[a]{.24\textwidth}
\FIG
{\resizebox{\linewidth}{!}{
\begin{tikzpicture}
\begin{axis}[view={30}{20}]
\addplot3+[draw=none, mark=o,color=blue,mark size=1] table {joe5.dat};
\end{axis}
\end{tikzpicture}
}}
{\includegraphics[width=\textwidth]{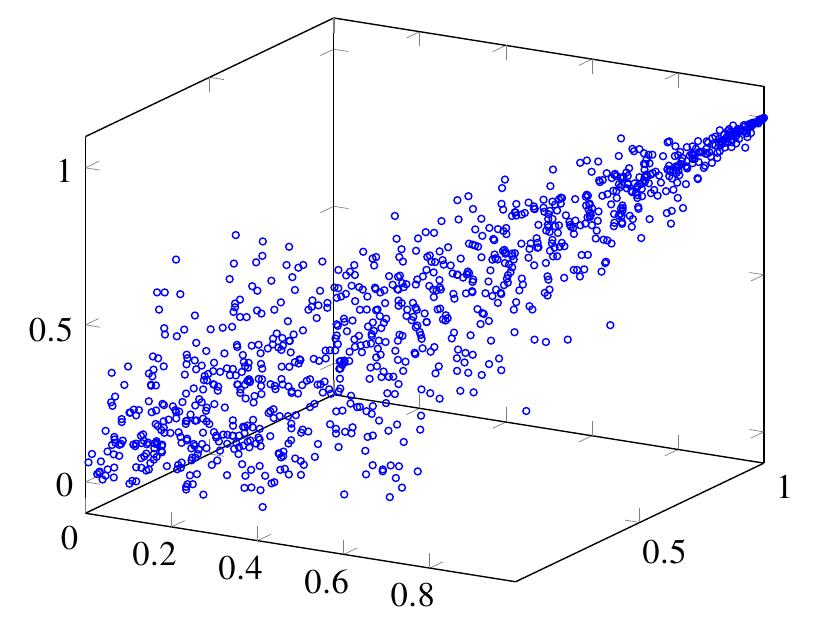}}
\caption{$\Joe\{5\}$.}
\label{sfig.joe5}
\end{subfigure}
\caption[Scatterplots: Random samples of size $n=1000$ are drawn from various trivariate copulas]{Scatterplots: Random samples of size $n=1000$ are drawn from Clayton, Frank, Gumbel and Joe trivariate copulas for various values of their parameters; the parameter value is shown in curly brackets.}
\label{fig.copulaScatterplots}
\end{figure}

\begin{table*}[htp]
 \centering{
 \caption[Monte Carlo averages (and associated mean square error) of $\hI_n$ and $\hI^*_n$; the number of Monte Carlo repetitions is $r=1000$. Random samples of size $n$ are drawn from different trivariate F-G-M copulas, for various values of $n$.]{Monte Carlo averages (and associated mean square error) of $I$ and $I^*$; the number of Monte Carlo repetitions is $r=1000$. Random samples of size $n$ are drawn from different F-G-M copulas given by \eqref{eq.F-G-M(a)} or \eqref{eq.F-G-M(b)}, for various values of $\theta$ and $n$. For each case, the true values of $I$ and $I^*$ are denoted in the caption.}
 \label{table.Sim-FGM}
 \footnotesize
\begin{subtable}{.47\textwidth}
 \caption{$C_{0.5}$; $I=0.052$, $I^*=0.093$.}
 \label{subtable.FGMa0.5}
 \scriptsize
 \begin{tabular*}{\textwidth}
 {@{\hspace{0ex}}@{\extracolsep{\fill}}l@{\hspace{0ex}}c@{\hspace{0ex}}c@{\hspace{0ex}}c@{\hspace{0ex}}c@{\hspace{0ex}}}
 \addlinespace
 \toprule
 & \multicolumn{4}{c}{$n$}\\
 \cmidrule{2-5}
 & 100 & $200$ & $500$ & $1000$ \\
 \midrule
 $\hI_n$   & .105(.003) & .075(.001) & .059(.000) & .055(.000) \\
 $\hI^*_n$ & .205(.013) & .141(.003) & .108(.001) & .100(.000) \\
 \bottomrule
 \end{tabular*}
\end{subtable}
\hfill
\begin{subtable}{.47\textwidth}
 \caption{$C_{0.7}$; $I=0.073$, $I^*=0.136$.}
 \label{subtable.FGMa0.7}
 \scriptsize
 \begin{tabular*}{\textwidth}
 {@{\hspace{0ex}}@{\extracolsep{\fill}}l@{\hspace{0ex}}c@{\hspace{0ex}}c@{\hspace{0ex}}c@{\hspace{0ex}}c@{\hspace{0ex}}}
 \addlinespace
 \toprule
 & \multicolumn{4}{c}{$n$}\\
 \cmidrule{2-5}
 & 100 & $200$ & $500$ & $1000$ \\
 \midrule
 $\hI_n$   & .110(.002) & .087(.001) & .076(.000) & .074(.000) \\
 $\hI^*_n$ & .220(.009) & .167(.002) & .144(.000) & .138(.000) \\
 \bottomrule
 \end{tabular*}
\end{subtable}
\bigskip\linebreak
\begin{subtable}{.47\textwidth}
 \caption{$C_{0.9}$; $I=0.094$, $I^*=0.180$.}
 \label{subtable..FGMa0.9}
 \scriptsize
 \begin{tabular*}{\textwidth}
 {@{\hspace{0ex}}@{\extracolsep{\fill}}l@{\hspace{0ex}}c@{\hspace{0ex}}c@{\hspace{0ex}}c@{\hspace{0ex}}c@{\hspace{0ex}}}
 \addlinespace
 \toprule
 & \multicolumn{4}{c}{$n$}\\
 \cmidrule{2-5}
 & 100 & $200$ & $500$ & $1000$ \\
 \midrule
 $\hI_n$   & .123(.001) & .102(.000) & .095(.000) & .094(.000) \\
 $\hI^*_n$ & .245(.006) & .198(.002) & .184(.000) & .181(.000) \\
 \bottomrule
 \end{tabular*}
\end{subtable}
\hfill
\begin{subtable}{.47\textwidth}
 \caption{$C_{1}$; $I=0.104$, $I^*=0.203$.}
 \label{subtable.FGMa1}
 \scriptsize
 \begin{tabular*}{\textwidth}
 {@{\hspace{0ex}}@{\extracolsep{\fill}}l@{\hspace{0ex}}c@{\hspace{0ex}}c@{\hspace{0ex}}c@{\hspace{0ex}}c@{\hspace{0ex}}}
 \addlinespace
 \toprule
 & \multicolumn{4}{c}{$n$}\\
 \cmidrule{2-5}
 & 100 & $200$ & $500$ & $1000$ \\
 \midrule
 $\hI_n$   & .129(.001) & .110(.000) & .105(.000) & .104(.000) \\
 $\hI^*_n$ & .257(.004) & .216(.001) & .204(.001) & .203(.000) \\
 \bottomrule
 \end{tabular*}
\end{subtable}
\medskip\linebreak
\begin{subtable}{.47\textwidth}
 \caption{$\wC_{0.5}$; $I=0.020$, $I^*=0.033$.}
 \label{subtable.FGMb0.5}
 \scriptsize
 \begin{tabular*}{\textwidth}
 {@{\hspace{0ex}}@{\extracolsep{\fill}}l@{\hspace{0ex}}c@{\hspace{0ex}}c@{\hspace{0ex}}c@{\hspace{0ex}}c@{\hspace{0ex}}}
 \addlinespace
 \toprule
 & \multicolumn{4}{c}{$n$}\\
 \cmidrule{2-5}
 & 100 & $200$ & $500$ & $1000$ \\
 \midrule
 $\hI_n$   & .098(.005) & .062(.001) & .037(.000) & .028(.000) \\
 $\hI^*_n$ & .189(.021) & .114(.005) & .065(.001) & .049(.000) \\
 \bottomrule
 \end{tabular*}
\end{subtable}
\hfill
\begin{subtable}{.47\textwidth}
 \caption{$\wC_{0.7}$; $I=0.028$, $I^*=0.048$.}
 \label{subtable.FGMb0.7}
 \scriptsize
 \begin{tabular*}{\textwidth}
 {@{\hspace{0ex}}@{\extracolsep{\fill}}l@{\hspace{0ex}}c@{\hspace{0ex}}c@{\hspace{0ex}}c@{\hspace{0ex}}c@{\hspace{0ex}}}
 \addlinespace
 \toprule
 & \multicolumn{4}{c}{$n$}\\
 \cmidrule{2-5}
 & 100 & $200$ & $500$ & $1000$ \\
 \midrule
 $\hI_n$   & .098(.004) & .063(.001) & .041(.000) & .034(.000) \\
 $\hI^*_n$ & .190(.016) & .117(.003) & .074(.001) & .059(.000) \\
 \bottomrule
 \end{tabular*}
\end{subtable}
\bigskip\linebreak
\begin{subtable}{.47\textwidth}
 \caption{$\wC_{0.9}$; $I=0.035$, $I^*=0.062$.}
 \label{subtable.FGMb0.9}
 \scriptsize
 \begin{tabular*}{\textwidth}
 {@{\hspace{0ex}}@{\extracolsep{\fill}}l@{\hspace{0ex}}c@{\hspace{0ex}}c@{\hspace{0ex}}c@{\hspace{0ex}}c@{\hspace{0ex}}}
 \addlinespace
 \toprule
 & \multicolumn{4}{c}{$n$}\\
 \cmidrule{2-5}
 & 100 & $200$ & $500$ & $1000$ \\
 \midrule
 $\hI_n$   & .099(.003) & .066(.001) & .046(.000) & .040(.000) \\
 $\hI^*_n$ & .192(.012) & .121(.002) & .082(.001) & .071(.000) \\
 \bottomrule
 \end{tabular*}
\end{subtable}
\hfill
\begin{subtable}{.47\textwidth}
 \caption{$\wC_{1}$; $I=0.039$, $I^*=0.070$.}
 \label{subtable.FGMb1}
 \scriptsize
 \begin{tabular*}{\textwidth}
 {@{\hspace{0ex}}@{\extracolsep{\fill}}l@{\hspace{0ex}}c@{\hspace{0ex}}c@{\hspace{0ex}}c@{\hspace{0ex}}c@{\hspace{0ex}}}
 \addlinespace
 \toprule
 & \multicolumn{4}{c}{$n$}\\
 \cmidrule{2-5}
 & 100 & $200$ & $500$ & $1000$ \\
 \midrule
 $\hI_n$   & .100(.002) & .067(.001) & .049(.000) & .043(.000) \\
 $\hI^*_n$ & .194(.010) & .125(.002) & .088(.001) & .077(.000) \\
 \bottomrule
 \end{tabular*}
\end{subtable}
}
 \end{table*}

\begin{figure}[htp]
\begin{subfigure}[a]{.24\textwidth}
\FIG
{\resizebox{\linewidth}{!}{
\begin{tikzpicture}
\begin{axis}[view={30}{20}]
\addplot3+[draw=none, mark=o,color=blue,mark size=1] table {FGMa5.dat};
\end{axis}
\end{tikzpicture}
}}
{\includegraphics[width=\textwidth]{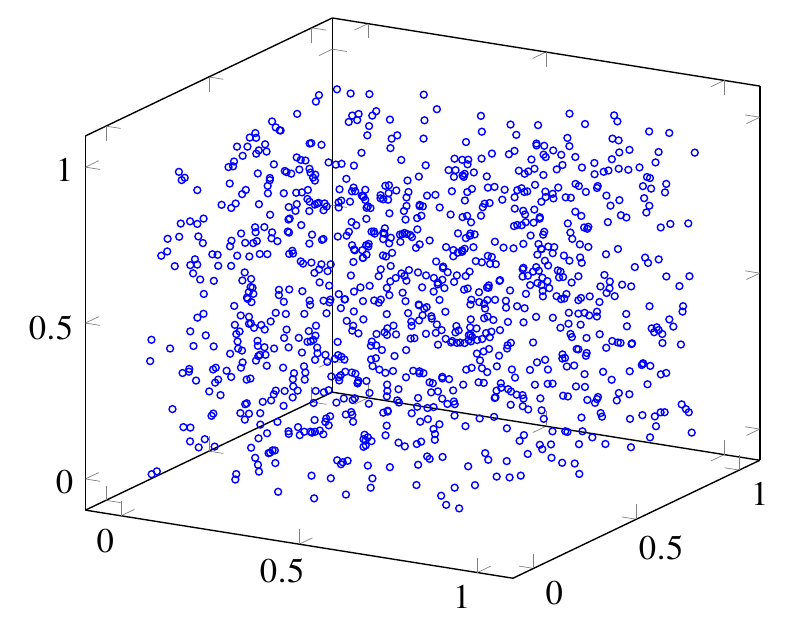}}
\caption{$C_{0.5}$.}
\label{sfig.FGMa0.5}
\end{subfigure}
\hfill
\begin{subfigure}[a]{.24\textwidth}
\FIG
{\resizebox{\linewidth}{!}{
\begin{tikzpicture}
\begin{axis}[view={30}{20}]
\addplot3+[draw=none, mark=o,color=blue,mark size=1] table {FGMa7.dat};
\end{axis}
\end{tikzpicture}
}}
{\includegraphics[width=\textwidth]{FGMa5}}
\caption{$C_{0.7}$.}
\label{sfig.FGMa0.7}
\end{subfigure}
\hfill
\begin{subfigure}[a]{.24\textwidth}
\FIG
{\resizebox{\linewidth}{!}{
\begin{tikzpicture}
\begin{axis}[view={30}{20}]
\addplot3+[draw=none, mark=o,color=blue,mark size=1] table {FGMa9.dat};
\end{axis}
\end{tikzpicture}
}}
{\includegraphics[width=\textwidth]{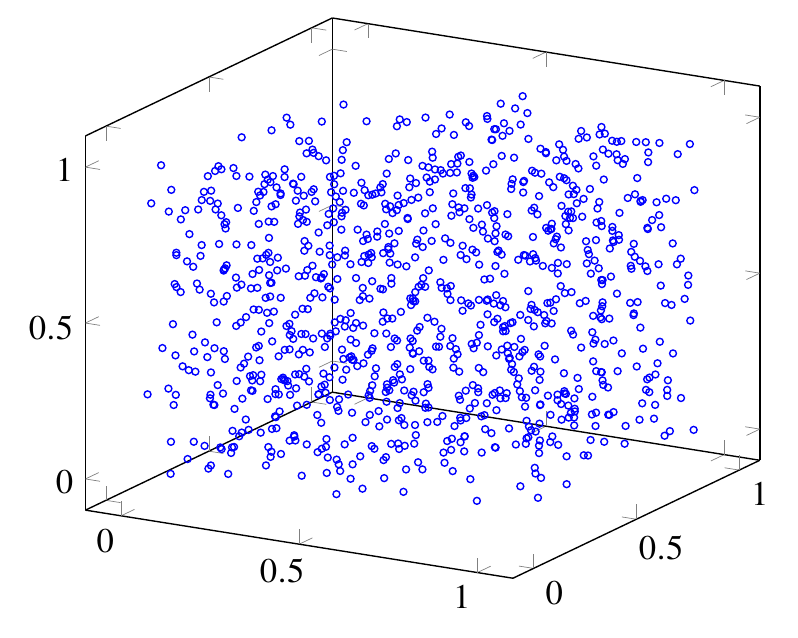}}
\caption{$C_{0.9}$.}
\label{sfig.FGMa0.9}
\end{subfigure}
\hfill
\begin{subfigure}[a]{.24\textwidth}
\FIG
{\resizebox{\linewidth}{!}{
\begin{tikzpicture}
\begin{axis}[view={30}{20}]
\addplot3+[draw=none, mark=o,color=blue,mark size=1] table {FGMa10.dat};
\end{axis}
\end{tikzpicture}
}}
{\includegraphics[width=\textwidth]{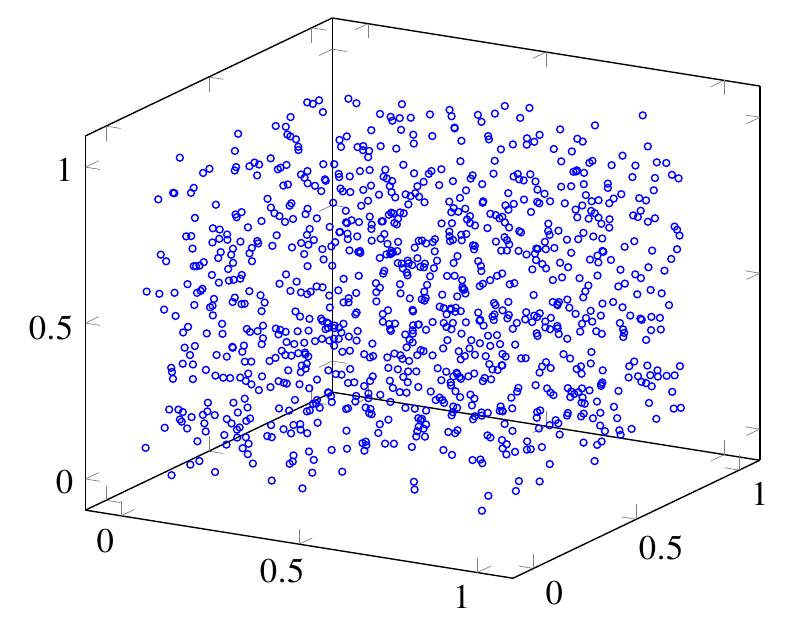}}
\caption{$C_{1}$.}
\label{sfig.FGMa1}
\end{subfigure}
\medskip\linebreak
\begin{subfigure}[a]{.24\textwidth}
\FIG
{\resizebox{\linewidth}{!}{
\begin{tikzpicture}
\begin{axis}[view={30}{20}]
\addplot3+[draw=none, mark=o,color=blue,mark size=1] table {FGMb5.dat};
\end{axis}
\end{tikzpicture}
}}
{\includegraphics[width=\textwidth]{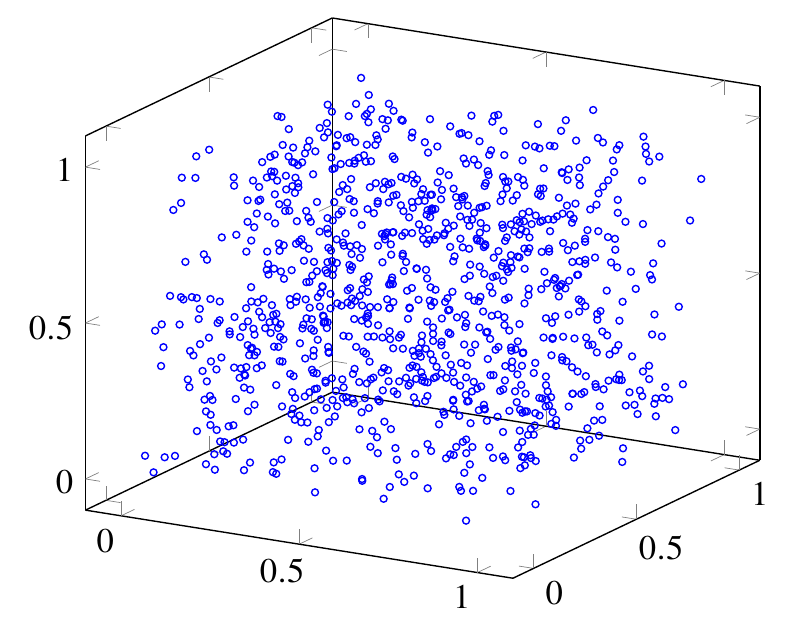}}
\caption{$\wC_{0.5}$.}
\label{sfig.FGMb0.5}
\end{subfigure}
\hfill
\begin{subfigure}[a]{.24\textwidth}
\FIG
{\resizebox{\linewidth}{!}{
\begin{tikzpicture}
\begin{axis}[view={30}{20}]
\addplot3+[draw=none, mark=o,color=blue,mark size=1] table {FGMb7.dat};
\end{axis}
\end{tikzpicture}
}}
{\includegraphics[width=\textwidth]{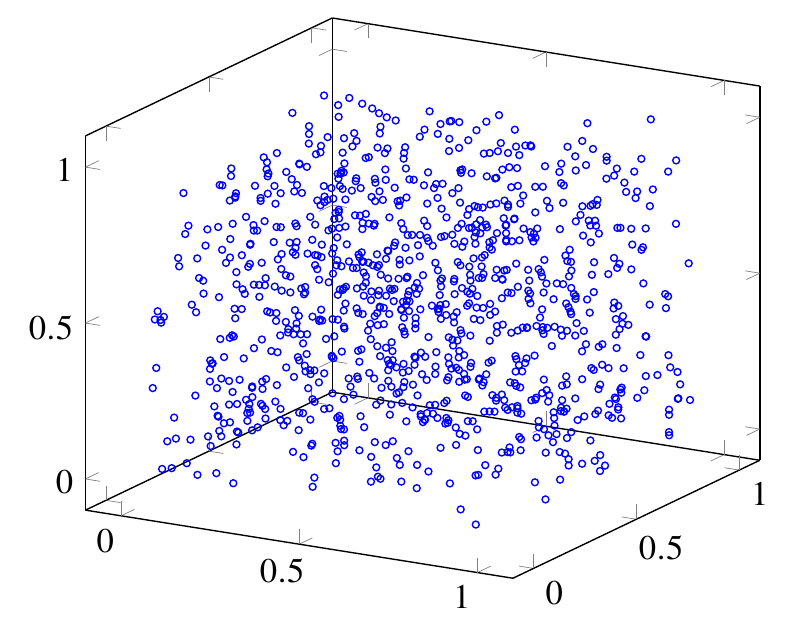}}
\caption{$\wC_{0.7}$.}
\label{sfig.FGMb0.7}
\end{subfigure}
\hfill
\begin{subfigure}[a]{.24\textwidth}
\FIG
{\resizebox{\linewidth}{!}{
\begin{tikzpicture}
\begin{axis}[view={30}{20}]
\addplot3+[draw=none, mark=o,color=blue,mark size=1] table {FGMb9.dat};
\end{axis}
\end{tikzpicture}
}}
{\includegraphics[width=\textwidth]{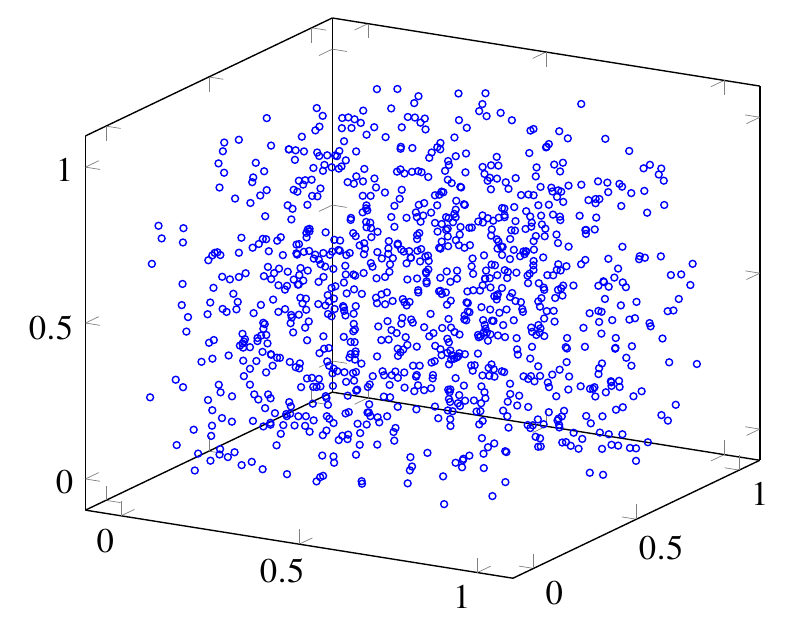}}
\caption{$\wC_{0.9}$.}
\label{sfig.FGMb0.9}
\end{subfigure}
\hfill
\begin{subfigure}[a]{.24\textwidth}
\FIG
{\resizebox{\linewidth}{!}{
\begin{tikzpicture}
\begin{axis}[view={30}{20}]
\addplot3+[draw=none, mark=o,color=blue,mark size=1] table {FGMb10.dat};
\end{axis}
\end{tikzpicture}
}}
{\includegraphics[width=\textwidth]{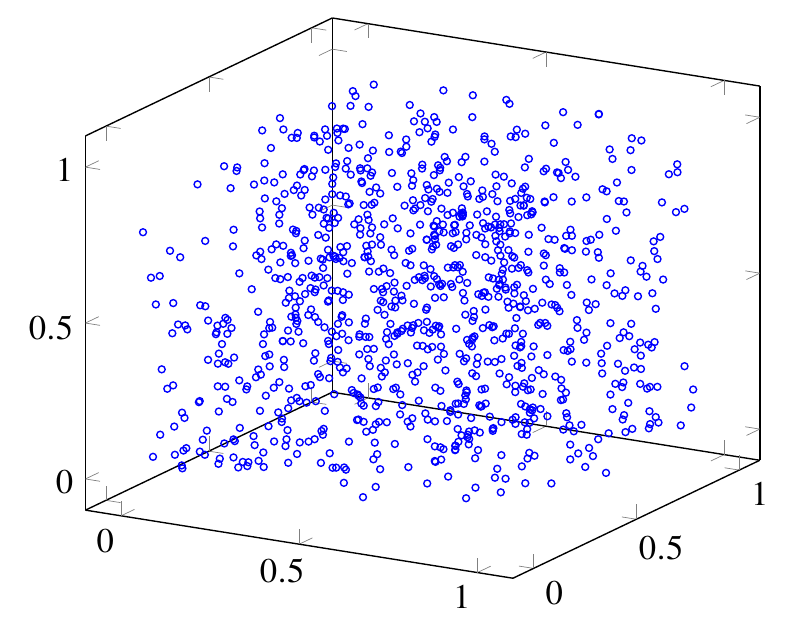}}
\caption{$\wC_{1}$.}
\label{sfig.FGMb1}
\end{subfigure}
\caption[Scatterplots: Random samples of size $n=1000$ are drawn from various trivariate copulas]{Scatterplots: Random samples of size $n=1000$ are drawn from different trivariate F-G-M copulas $C_{\theta}$ and $\wC_{\theta}$ for various values $\theta$, see \eqref{eq.F-G-M}.}
\label{fig.FGM.Scatterplots}
\end{figure}

{\bf General Observations:}
1) As the sample size increases the bias decreases;
2) The highest percentage of bias, across all sample sizes, is observed in the case of complete dependence $(\rho_{12},\rho_{13},\rho_{23})=(1,1,1)$. In this case $I=I^*=1$. The observed bias (defined as $\hI-I$) goes from $-26.1\%$ ($n=100$, $I$) to $-0.1\%$ ($n=1000$, $I^*$).

\subsection{Evaluating the proposed test of independence}
\label{ssec.sim.test}

We now investigate the performance of the test of independence, introduced in Section \ref{sec.test}. Performance will be investigated in terms of both, level and power. The dimensions of the generated data are $d=2,6$. We simulate normal and non-normal data distributions. We choose to work with the bivariate case because it allows comparisons between numerical results produced and the shape of the data distributions. Figure \ref{Fig.BVdistributions} illustrates graphically the shape of a single random sample of size 1000 drawn from bivariate non-normal distributions we work with, with various degrees of dependence.

The results shown in Table \ref{table.power.normal} verify the consistency of the proposed independence test, in the bivariate normal case. As the sample size increases to infinity the power of the test increases to 1. The rate of convergence of the power to 1 depends on the strength of the association between the components of the random vector. 

Table \ref{table.power.normal} also presents a comparison between the proposed test and the test for independence based on the estimated \dHSIC statistic. We simulated this test using the R package {\tt`dHSIC'} by \citet{PP2019}. Our test exhibits higher power than the \dHSIC-based test for all smaller samples and smaller values of $\rho$, while it exhibits the same power with the \dHSIC-based test for larger samples and values of $\rho$.

\begin{table}[htp]
\caption[Empirical rejection rates (\%) at the $\alpha=5\%$ significance level of $\AUK$ and \dHSIC independence tests, calculated from $1000$ random samples of size $n$ simulated from different bivariate normal distributions.]{Empirical rejection rates (\%) at the $\alpha=5\%$ significance level of $\AUK$ and \dHSIC independence tests, calculated from $1000$ random samples of size $n$ simulated from $\CN_2\left(\bm{0},\Sig_2(\rho)\right)$ distribution for various values of $\rho$. The first two rows correspond to the empirical level of significance of the tests, while the other ones correspond to the empirical power of the tests.}
 \label{table.power.normal}
 \small
 \begin{tabular*}{\textwidth}
 {@{\hspace{0ex}}@{\extracolsep{\fill}}l@{}l@{}S@{}S@{}S@{}S@{}S@{}S@{}S@{}S@{}S@{\hspace{-1.3ex}}}
 \toprule
 \raisebox{-1ex}{$\rho$} & \raisebox{-1ex}{test} \quad \ \ $n$ & {50} & {100} & {200} & {300} & {500} & {750} & {1000} & {1500} & {2000} \\
 \midrule
 \multirow{2}{*}{$0$}
 & $\AUK$ & 5.2 & 4.9 & 4.8 & 4.9 & 5   & 5.1 & 4.9 & 5.1 & 5   \\
 [-.4ex]
 & \dHSIC & 5.5 & 5.9 & 5.1 & 4.7 & 4.3 & 4.7 & 5.5 & 5.4 & 5.2 \\
 [.6ex]
 \multirow{2}{*}{$0.1$}
 & $\AUK$ & 14.7 & 22   & 32.1 & 42.9 & 62.1 & 76.6 & 86.5 & 95.2 & 98.2 \\
 [-.4ex]
 & \dHSIC & 6.8  & 8.1  & 12.7 & 15   & 24.2 & 37.9 & 48.4 & 70.3 & 80.3 \\
 [.6ex]
 \multirow{2}{*}{$0.2$}
 & $\AUK$ & 35.2 & 54.5 & 80.4 & 93.3 & 99   & 99.9 & 100  & 100 & 100 \\
 [-.4ex]
 & \dHSIC & 13.2 & 23.3 & 40.1 & 59.6 & 84.4 & 95.8 & 99.3 & 100 & 100 \\
 [.6ex]
 \multirow{2}{*}{$0.3$}
 & $\AUK$ & 59.2 & 84.8 & 98.9 & 100  & 100  & 100 & 100 & 100 & 100 \\
 [-.4ex]
 & \dHSIC & 27.7 & 43.3 & 80.2 & 94.8 & 99.8 & 100 & 100 & 100 & 100 \\
 [.6ex]
 \multirow{2}{*}{$0.4$}
 & $\AUK$ & 84.6 & 97.6 & 100  & 100  & 100 & 100 & 100 & 100 & 100 \\
 [-.4ex]
 & \dHSIC & 46.3 & 79   & 98.1 & 99.9 & 100 & 100 & 100 & 100 & 100 \\
 [.6ex]
 \multirow{2}{*}{$0.5$}
 & $\AUK$ & 95.6 & 99.9 & 100  & 100 & 100 & 100 & 100 & 100 & 100 \\
 [-.4ex]
 & \dHSIC & 71   & 96.6 & 99.9 & 100 & 100 & 100 & 100 & 100 & 100 \\
 [.6ex]
 \multirow{2}{*}{$0.6$}
 & $\AUK$ & 99.4 & 100 & 100 & 100 & 100 & 100 & 100 & 100 & 100 \\
 [-.4ex]
 & \dHSIC & 92.6 & 100 & 100 & 100 & 100 & 100 & 100 & 100 & 100 \\
  \bottomrule
 \end{tabular*}
 \end{table}

 \begin{figure}[htp]
 \centering
 \begin{subfigure}[t]{.24\textwidth}
 \centering
\FIG
{\resizebox{\linewidth}{!}{
\begin{tikzpicture}[scale=.94\tikzscale]
 \begin{axis}
 \addplot[blue,only marks, mark=o, mark options={scale=.5}] table [x index=2,y index=3] {coordinates.dat};
 \end{axis}
\end{tikzpicture}
}}
{\includegraphics[width=.94\linewidth]{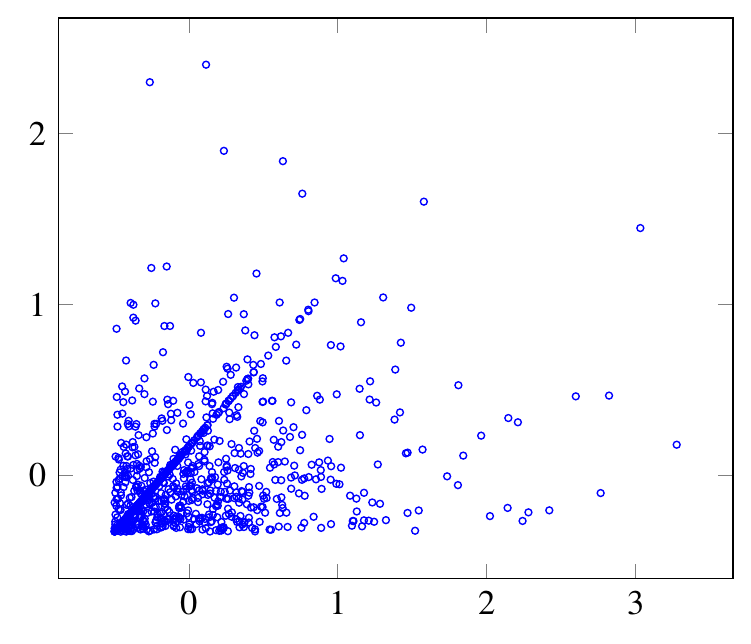}}
\caption{$\exp\{2,3,1.3\}$.}
 \label{subfig.BExp}
 \end{subfigure}
 \hfill
  \begin{subfigure}[t]{.24\textwidth}
 	\centering
 	\FIG
 	{\resizebox{\linewidth}{!}{
 			\begin{tikzpicture}[scale=\tikzscale]
 				\begin{axis}
 					\addplot[blue,only marks, mark=o, mark options={scale=.5}] table [x index=17,y index=18] {coordinates.dat};
 				\end{axis}
 			\end{tikzpicture}
 	}}
 	{\includegraphics[width=\linewidth]{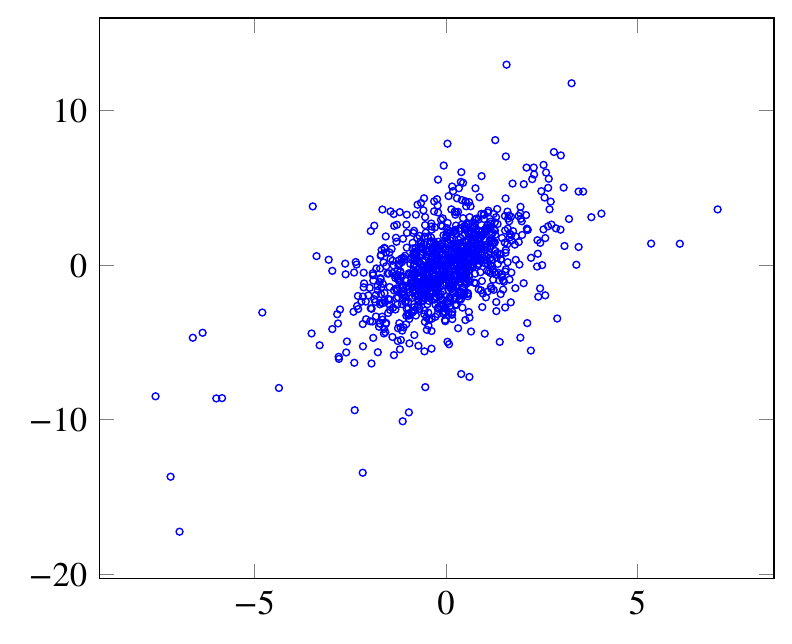}}
 	\caption{$t_5$ with $\Sig=\big({1\atop1}~{1\atop4}\big)$.}
 	\label{subfig.BVt5}
 \end{subfigure}
 \hfill
 \begin{subfigure}[t]{.24\textwidth}
 \centering
\FIG
{\resizebox{\linewidth}{!}{
\begin{tikzpicture}[scale=.97\tikzscale]
 \begin{axis}
 \addplot[blue,only marks, mark=o, mark options={scale=.5}] table [x index=6,y index=7] {coordinates.dat};
 \end{axis}
\end{tikzpicture}
}}
{\includegraphics[width=.97\linewidth]{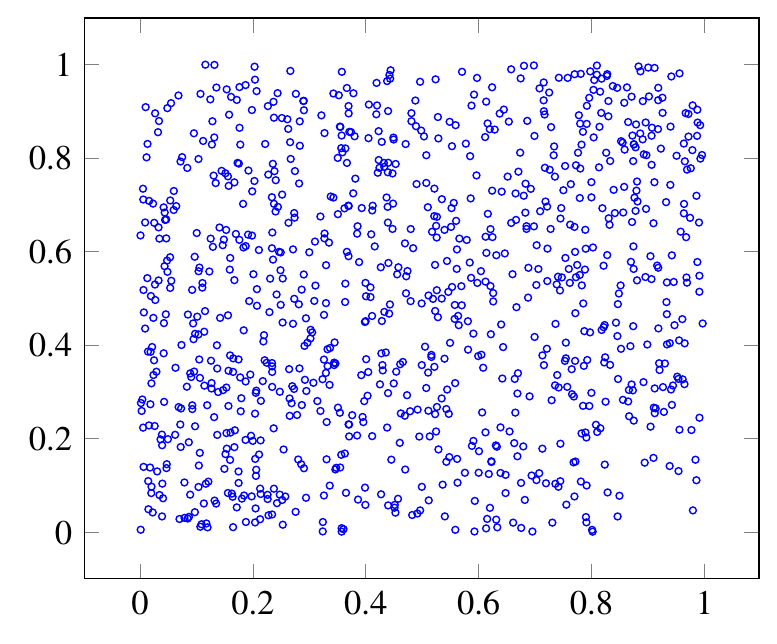}}
 \caption{$\Morgenstern\{0.5\}$.}
 \label{subfig.BVMorgenstern0.5}
 \end{subfigure}
\hfill
 \begin{subfigure}[t]{.24\textwidth}
 \centering
\FIG
{\resizebox{\linewidth}{!}{
\begin{tikzpicture}[scale=.97\tikzscale]
 \begin{axis}
 \addplot[blue,only marks, mark=o, mark options={scale=.5}] table [x index=8,y index=9] {coordinates.dat};
 \end{axis}
\end{tikzpicture}
}}
{\includegraphics[width=.97\linewidth]{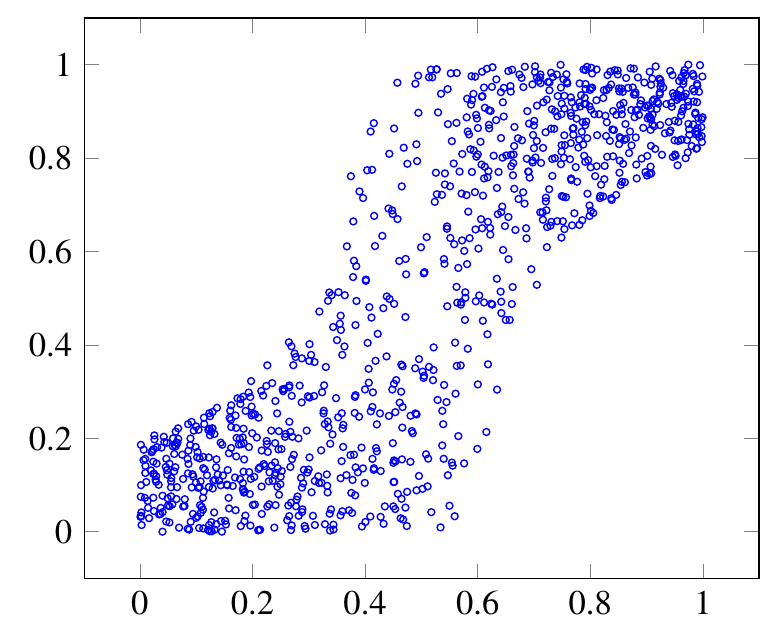}}
 \caption{$\Morgenstern\{5\}$.}
 \label{subfig.BVMorgenstern5}
 \end{subfigure}
 \medskip\linebreak
 \begin{subfigure}[t]{.24\textwidth}
 \centering
\FIG
{\resizebox{\linewidth}{!}{
\begin{tikzpicture}[scale=\tikzscale]
 \begin{axis}
 \addplot[blue,only marks, mark=o, mark options={scale=.5}] table [x index=10,y index=11] {coordinates.dat};
 \end{axis}
\end{tikzpicture}
}}
{\includegraphics[width=\linewidth]{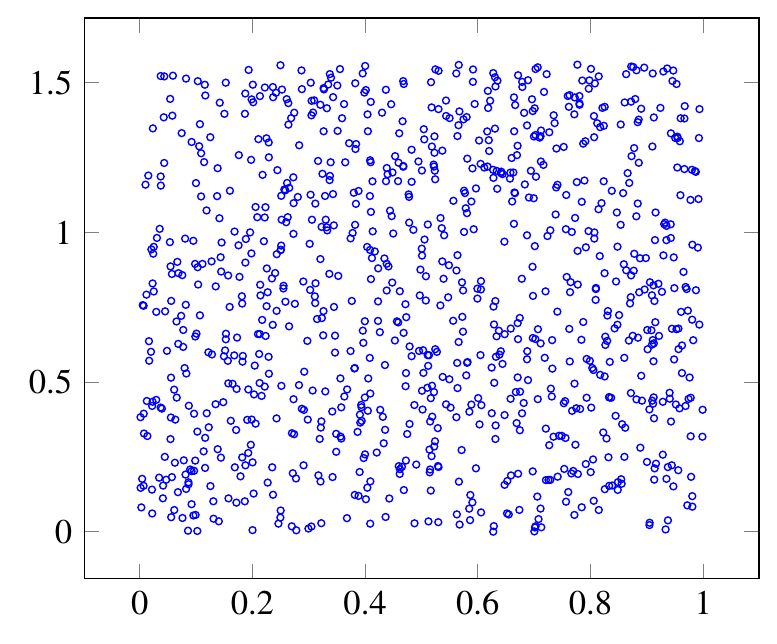}}
 \caption{$\Plackett\{1.25\}$.}
 \label{subfig.BVPlackett1.25}
 \end{subfigure}
 \hfill
 \begin{subfigure}[t]{.24\textwidth}
 \centering
\FIG
{\resizebox{\linewidth}{!}{
\begin{tikzpicture}[scale=.97\tikzscale]
 \begin{axis}
 \addplot[blue,only marks, mark=o, mark options={scale=.5}] table [x index=12,y index=13] {coordinates.dat};
 \end{axis}
\end{tikzpicture}
}}
{\includegraphics[width=.97\linewidth]{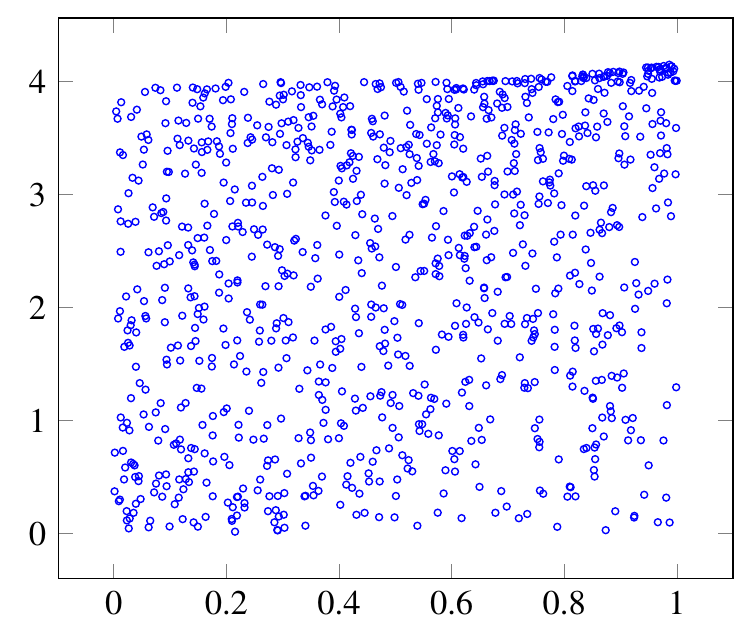}}
 \caption{$\Plackett\{2\}$.}
 \label{subfig.BVPlackett2}
 \end{subfigure}
 \hfill
 \begin{subfigure}[t]{.24\textwidth}
 \centering
\FIG
{\resizebox{\linewidth}{!}{
\begin{tikzpicture}[scale=\tikzscale]
 \begin{axis}
 \addplot[blue,only marks, mark=o, mark options={scale=.5}] table [x index=14,y index=6] {coordinates.dat};
 \end{axis}
\end{tikzpicture}
}}
{\includegraphics[width=\linewidth]{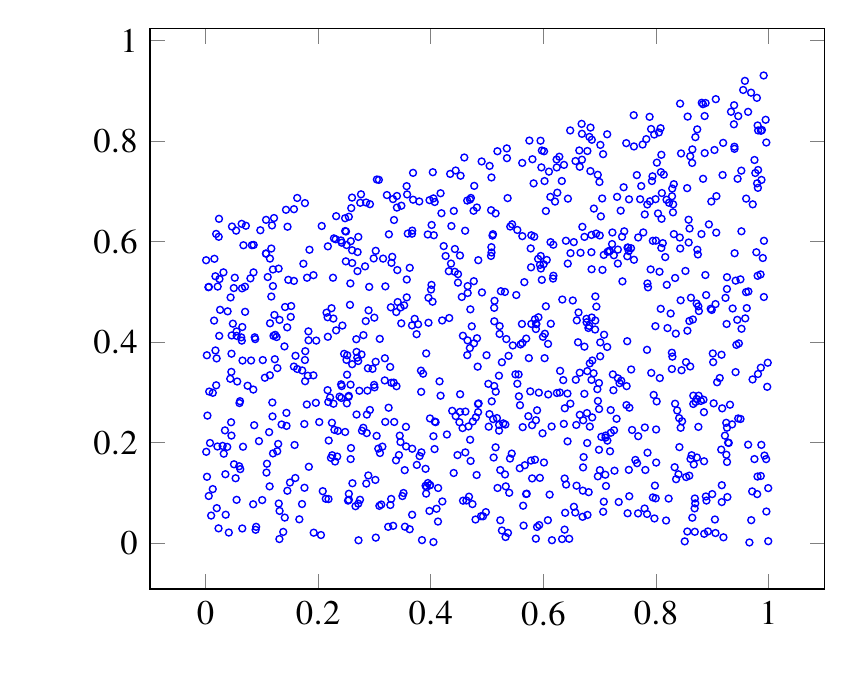}}
 \caption{$\ALIHAQ\{0.1,0.5\}$.}
 \label{subfig.BVALI-HAQ's1-5}
 \end{subfigure}
 \hfill
 \begin{subfigure}[t]{.24\textwidth}
	\centering
	\FIG
	{\resizebox{\linewidth}{!}{
			\begin{tikzpicture}[scale=\tikzscale]
				\begin{axis}
					\addplot[blue,only marks, mark=o, mark options={scale=.5}] table [x index=14,y index=6] {coordinates.dat};
				\end{axis}
			\end{tikzpicture}
	}}
	{\includegraphics[width=\linewidth]{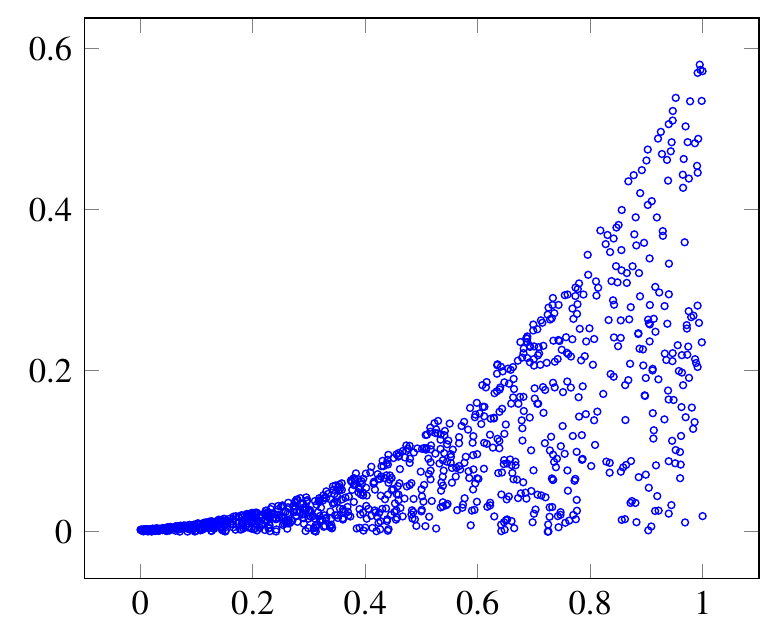}}
	\caption{$\ALIHAQ\{0.9,0.5\}$.}
	\label{subfig.BVALI-HAQ's9-5}
\end{subfigure}
 \medskip\linebreak
  \begin{subfigure}[t]{.24\textwidth}
 	\centering
 	\FIG
 	{\resizebox{\linewidth}{!}{
 			\begin{tikzpicture}[scale=\tikzscale]
 				\begin{axis}
 					\addplot[blue,only marks, mark=o, mark options={scale=.5}] table [x index=15,y index=16] {coordinates.dat};
 				\end{axis}
 			\end{tikzpicture}
 	}}
 	{\includegraphics[width=\linewidth]{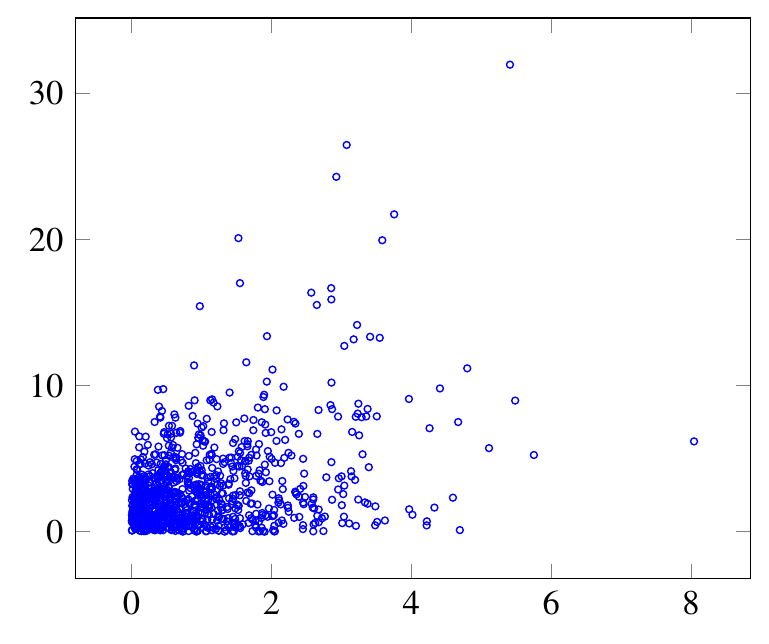}}
 	\caption{$\Gumbel\{0.9\}$.}
 	\label{subfig.BVGumbel}
 \end{subfigure}
 \hfill
 \begin{subfigure}[t]{.24\textwidth}
 \centering
\FIG
{\resizebox{\linewidth}{!}{
\begin{tikzpicture}[scale=\tikzscale]
 \begin{axis}
 \addplot[blue,only marks, mark=o, mark options={scale=.5}] table [x index=26,y index=27] {coordinates.dat};
 \end{axis}
\end{tikzpicture}
}}
{\includegraphics[width=\linewidth]{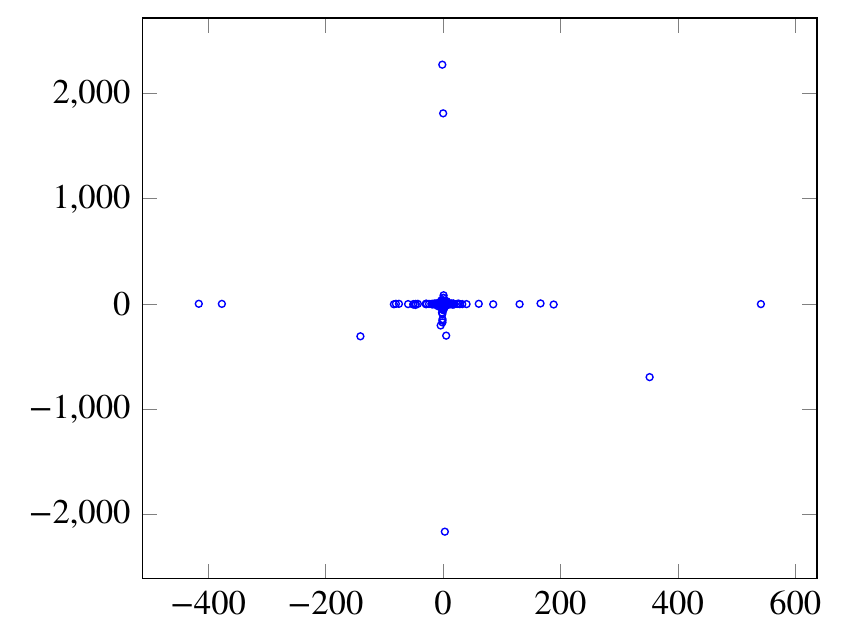}}
 \caption{$(X,Y)$, where $(1/X,1/Y)\sim \CN_2(\bm{0},I)$.}
 \label{subfig.BVinverseNormals}
 \end{subfigure}
 \hfill
 \begin{subfigure}[t]{.24\textwidth}
 \centering
\FIG
{\resizebox{\linewidth}{!}{
\begin{tikzpicture}[scale=\tikzscale]
 \begin{axis}
 \addplot[blue,only marks, mark=o, mark options={scale=.5}] table [x index=28,y index=29] {coordinates.dat};
 \end{axis}
\end{tikzpicture}
}}
{\includegraphics[width=\linewidth]{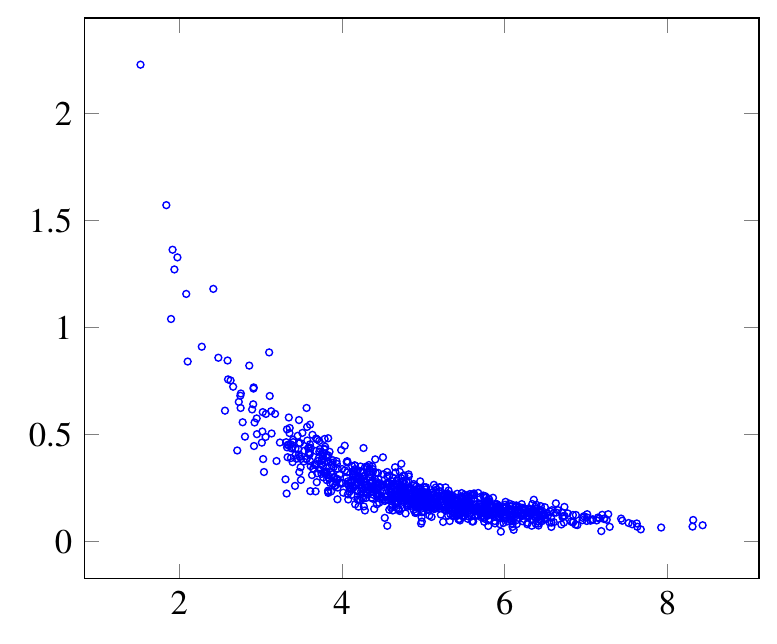}}
 \caption{$(X,\varepsilon/X^2)$, where $X$ and $\varepsilon\sim\CN(5,1)$ are independent.}
 \label{subfig.noise1}
 \end{subfigure}
 \hfill
 \begin{subfigure}[t]{.24\textwidth}
 \centering
\FIG
{\resizebox{\linewidth}{!}{
\begin{tikzpicture}[scale=\tikzscale]
 \begin{axis}
 \addplot[blue,only marks, mark=o, mark options={scale=.5}] table [x index=30,y index=31] {coordinates.dat};
 \end{axis}
\end{tikzpicture}
}}
{\includegraphics[width=\linewidth]{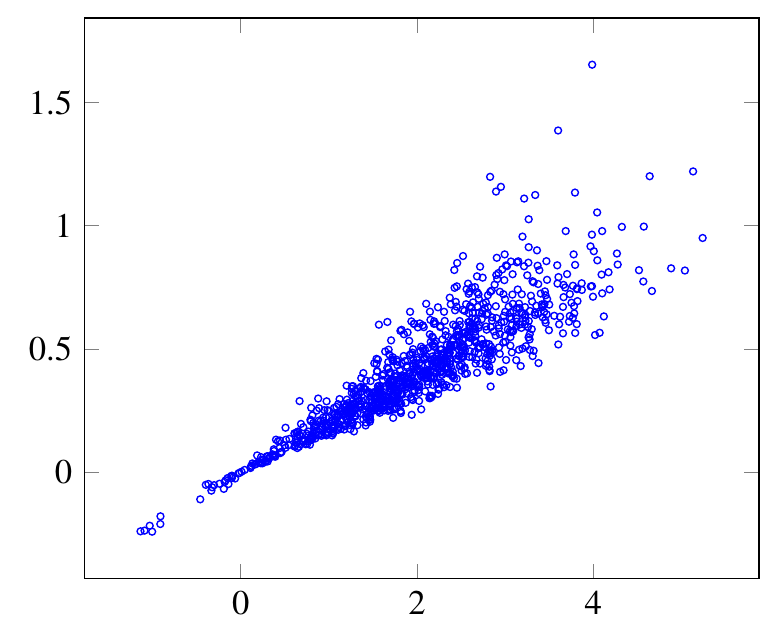}}
 \caption{$(X,X/\varepsilon)$ with $X\sim\CN(2,1)$, $\varepsilon\sim\CN(5,1)$ are independent.}
 \label{subfig.noise2}
 \end{subfigure}
  \medskip\linebreak

  \begin{subfigure}[t]{.24\textwidth}
 	\centering
 	\FIG
 	{\resizebox{\linewidth}{!}{
 			\begin{tikzpicture}[scale=\tikzscale]
 				\begin{axis}
 					\addplot[blue,only marks, mark=o, mark options={scale=.5}] table [x index=0,y index=1] {coordinates.dat};
 				\end{axis}
 			\end{tikzpicture}
 	}}
 	{\includegraphics[width=\linewidth]{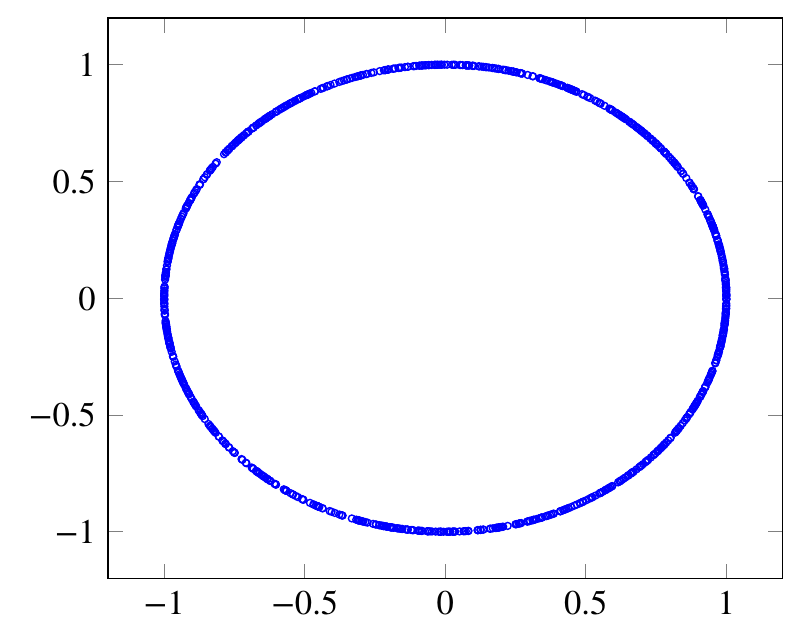}}
 	\caption{$U\{C(0,1)\}$.}
 	\label{subfig.BVU}
 \end{subfigure}
 \hfill
 \begin{subfigure}[t]{.24\textwidth}
 \centering
\FIG
{\resizebox{\linewidth}{!}{
\begin{tikzpicture}[scale=\tikzscale]
 \begin{axis}
 \addplot[blue,only marks, mark=o, mark options={scale=.5}] table [x index=17,y index=18] {coordinates.dat};
 \end{axis}
\end{tikzpicture}
}}
{\includegraphics[width=\linewidth]{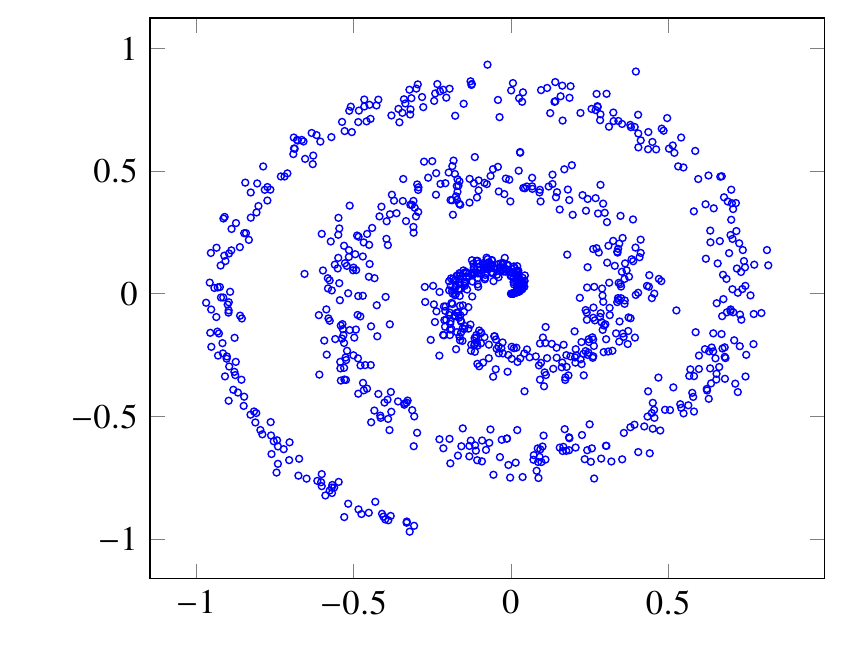}}
 \caption{Spiral data: case 1.}
 \label{subfig.Spiral1}
 \end{subfigure}
 \hfill
 \begin{subfigure}[t]{.24\textwidth}
 \centering
\FIG
{\resizebox{\linewidth}{!}{
\begin{tikzpicture}[scale=\tikzscale]
 \begin{axis}
 \addplot[blue,only marks, mark=o, mark options={scale=.5}] table [x index=26,y index=27] {coordinates.dat};
 \end{axis}
\end{tikzpicture}
}}
{\includegraphics[width=\linewidth]{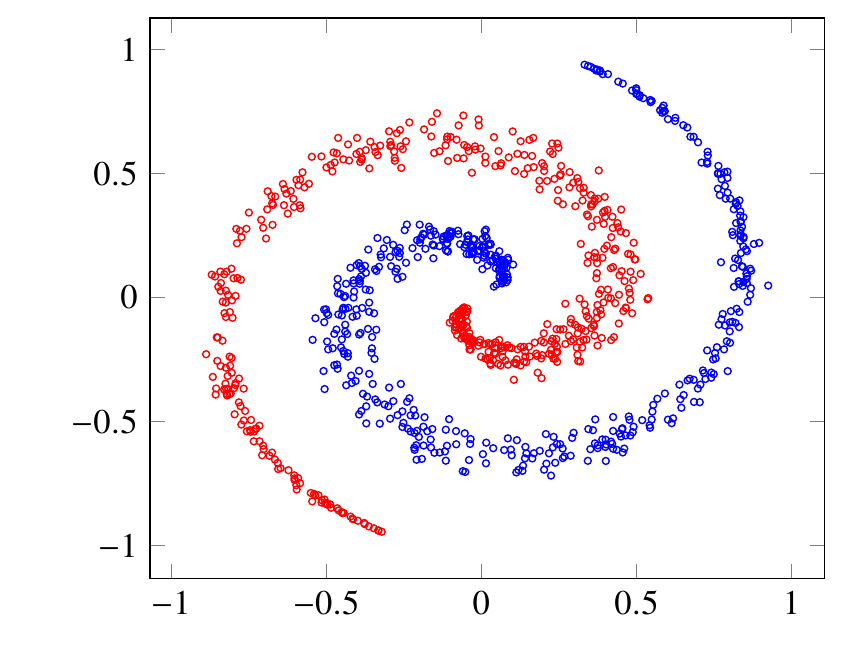}}
 \caption{Spiral data: case 2.}
 \label{subfig.Spiral2}
 \end{subfigure}
 \hfill
 \begin{subfigure}[t]{.24\textwidth}
 \centering
\FIG
{\resizebox{\linewidth}{!}{
\begin{tikzpicture}[scale=\tikzscale]
 \begin{axis}
 \addplot[blue,only marks, mark=o, mark options={scale=.5}] table [x index=28,y index=29] {coordinates.dat};
 \end{axis}
\end{tikzpicture}
}}
{\includegraphics[width=\linewidth]{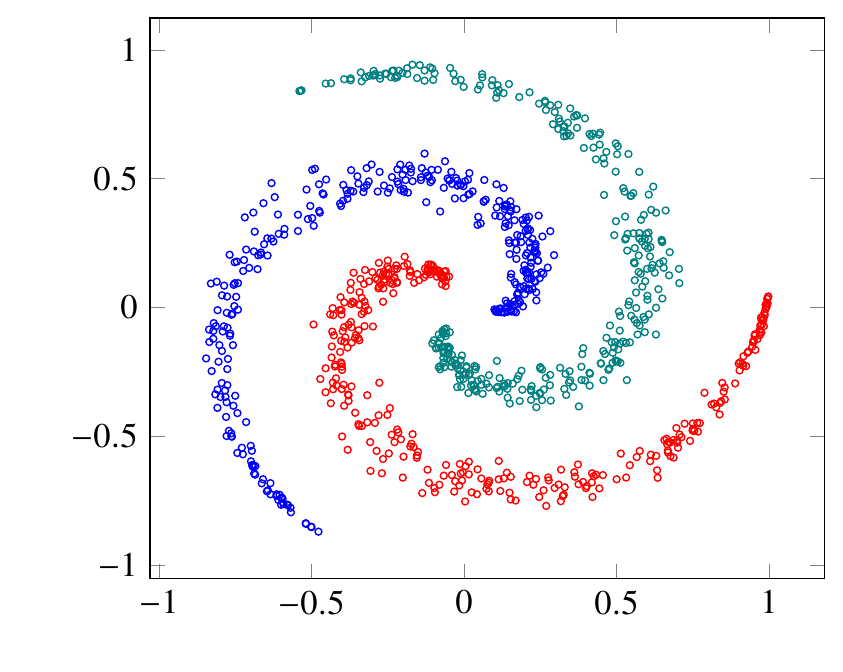}}
 \caption{Spiral data: case 3.}
 \label{subfig.Spiral3}
 \end{subfigure}
 \caption{Scatter plots: Random samples of size $n=1000$ are drawn from different bivariate distributions.}
 \label{Fig.BVdistributions}
 \end{figure}

In view of Table \ref{table.power.non-normal}, we see that the consistency of the proposed test of independence is also verified (as the sample size increases to infinity the power of the test tends to 1).

The case $U\{C(0,1)\}$ indicates an exact cyclical dependence (see Figure \ref{subfig.BVU}). In this case, the Kendall cdfs are $K_i(t)=\one{0\le t<1/2}(t+1/4)+\one{1/2\le t\le1}$ and so $\AUK_i=11/16-\ln(2)/4\approxeq0.514$, $i=1,\ldots,4$ --- see Example \ref{exm.C(0,1),normal}; hence, we compute $I=0.036$ and $I^*=0.074$. This kind of dependence is not recognized by both $\AUK$-indices. On the other hand, $\AUK$ is slightly different than $1/2$ and so we expect the power of the proposed test will tend to 1 as $n$ goes to infinity. Indeed, this happens even in this limited case, see the 25th row of the Table \ref{table.power.non-normal}; additionally, we have that the empirical power for $n=2500$ and $3000$ is $0.941$ and $0.986$ respectively.

In view of Figure \ref{subfig.BVinverseNormals}, one may incorrectly conclude that there is an association between the marginal distributions of the data distribution. Since $(1/X,1/Y)\sim \CN_2(\bm{0},I)$, $X$ and $Y$ are independent continuous rvs. Thus, the rate of rejection of the null hypothesis corresponds to the empirical size of the tests. For the same values of $n$ as in Table \ref{table.power.non-normal}, the empirical size of the $\AUK$ independence test calculated from 1000 random samples of size $n$ results in the following values 0.060, 0.053, 0.060, 0.052, 0.039, 0.052, 0.057, 0.054 and 0.046, that is, the test maintains the nominal level well for all sample sizes; notice that the \dHSIC independence test also maintains the nominal level well for all sample sizes.

Table \ref{table.power.non-normal} presents the empirical power of our proposed test, as well as that of the \dHSIC-based test for a variety of sample sizes and distributions. We observe that for distributions, such as $t_5$ with the reported covariance matrix, $\Morgenstern\{0.5\}$, $\Plackett\{1.25\}$ or $\Plackett\{2\}$ our proposed test outperforms the \dHSIC-based test for smaller samples and it has the same power with \dHSIC-based test for larger samples. The case where the \dHSIC-based test outperforms our test in terms of power is when the data come from $\ALIHAQ\{0.1,0.5\}$, $U\{C(0,1)\}$ and spiral data cases 1 and 3. Hence, overall the proposed test either performs competitively or outperforms the kernel-based tests in most of the cases met in practice.

\begin{table}[htp]
 \caption{Empirical power (\%) of $\AUK$ and \dHSIC independence tests, with a nominal significance level of $\alpha=0.05$, calculated from $1,000$ random samples of size $n$ simulated from various bivariate non-normal distributions.}
 \label{table.power.non-normal}
 \small
 \begin{tabular*}{\textwidth}
 {@{}@{\extracolsep{\fill}}l@{}l@{}S@{}S@{}S@{}S@{}S@{}S@{}S@{}S@{}S@{\hspace{-1.3ex}}}
 \toprule
 \raisebox{-1ex}{distribution} & \raisebox{-1ex}{test} \quad \ \ $n$ & {50} & {100} & {200} & {300} & {500} & {750} & {1000} & {1500} & {2000} \\
 \midrule
 \multirow{2}{*}{$\exp\{2,3,1.3\}$}
 & $\AUK$ & 97.3 & 100  & 100 & 100 & 100 & 100 & 100 & 100 & 100 \\
 [-.4ex]
 & \dHSIC & 90.5 & 99.9 & 100 & 100 & 100 & 100 & 100 & 100 & 100 \\
 [.6ex]
 \multirow{2}{*}{$t_5$ with $\Sig=\big({1\atop1}~{1\atop4}\big)$}
 & $\AUK$ & 93.4 & 99.9 & 100 & 100 & 100 & 100 & 100 & 100 & 100 \\
 [-.4ex]
 & \dHSIC & 76.9 & 97.3 & 100 & 100 & 100 & 100 & 100 & 100 & 100 \\
 [.6ex]
 \multirow{2}{*}{$\Morgenstern\{0.5\}$}
 & $\AUK$ & 27.4 & 42.7 & 66.1 & 82.1 & 95.8 & 99   & 99.9 & 100 & 100 \\
 [-.4ex]
 & \dHSIC & 16.6 & 28.1 & 50.4 & 69.7 & 87.9 & 97.9 & 99.6 & 100 & 100 \\
 [.6ex]
\multirow{2}{*}{$\Morgenstern\{5\}$}
 & $\AUK$ & 100 & 100 & 100 & 100 & 100 & 100 & 100 & 100 & 100 \\
 [-.4ex]
 & \dHSIC & 100 & 100 & 100 & 100 & 100 & 100 & 100 & 100 & 100 \\
 [.6ex]
 \multirow{2}{*}{$\Plackett\{1.25\}$}
 & $\AUK$ & 11.4 & 14.1 & 22.9 & 29.5 & 40.3 & 53.2 & 68.7 & 82   & 90.5 \\
 [-.4ex]
 & \dHSIC &  6.3 & 10   & 15.6 & 19.7 & 26.8 & 40.4 & 48.9 & 67.9 & 80.1 \\
 [.6ex]
 \multirow{2}{*}{$\Plackett\{2\}$}
 & $\AUK$ & 46.8 & 70.4 & 92.1 & 98.7 & 99.9 & 100 & 100 & 100 & 100 \\
 [-.4ex]
 & \dHSIC & 29.6 & 57.2 & 86.9 & 96.4 & 99.8 & 100 & 100 & 100 & 100 \\
 [.6ex]
 \multirow{2}{*}{$\ALIHAQ\{0.1,0.5\}$}
 & $\AUK$ & 17.2 & 26.3 & 39   & 51.7 & 69   & 84.6 & 92.4 & 99.1 & 100 \\
 [-.4ex]
 & \dHSIC & 16.1 & 30.7 & 66.8 & 89.2 & 99.4 & 100  & 100  & 100  & 100 \\
 [.6ex]
 \multirow{2}{*}{$\ALIHAQ\{0.9,0.5\}$}
 & $\AUK$ & 100 & 100 & 100 & 100 & 100 & 100 & 100 & 100 & 100 \\
 [-.4ex]
 & \dHSIC & 100 & 100 & 100 & 100 & 100 & 100 & 100 & 100 & 100 \\
 [.6ex]
 \multirow{2}{*}{$\Gumbel\{0.9\}$}
 & $\AUK$ & 22.1 & 35.3 & 52.6 & 68.1 & 85.9 & 95.4 & 98.7 & 100 & 100 \\
 [-.4ex]
 & \dHSIC & 24.9 & 43.7 & 76.1 & 90.4 & 99.2 & 100  & 100  & 100 & 100 \\
 [.6ex]
 $(X,\varepsilon/X^2)$
 & $\AUK$ & 100 & 100 & 100 & 100 & 100 & 100 & 100 & 100 & 100 \\
 [-.4ex]
 see Figure \ref{subfig.noise1}
 & \dHSIC & 100 & 100 & 100 & 100 & 100 & 100 & 100 & 100 & 100 \\
 [.6ex]
 $(X,X/\varepsilon)$
 & $\AUK$ & 100 & 100 & 100 & 100 & 100 & 100 & 100 & 100 & 100 \\
 [-.4ex]
 see Figure \ref{subfig.noise2}
 & \dHSIC & 100 & 100 & 100 & 100 & 100 & 100 & 100 & 100 & 100 \\
 [.6ex]
 \multirow{2}{*}{$U\{C(0,1)\}$}
 & $\AUK$ & 0   & 0.2 & 1.1 & 3.1 & 9.3 & 16. & 36.9 & 69 & 86.1 \\
 [-.4ex]
 & \dHSIC & 96.7 & 100 & 100 & 100 & 100 & 100 & 100 & 100 & 100 \\
 [.6ex]
 \multirow{2}{*}{Spiral data: case 1}
 & $\AUK$ & 14.5 & 22.9 & 35.6 & 44.9 & 60.5 & 76 & 86.7 & 95.4 & 98.8 \\
 [-.4ex]
 & \dHSIC & 28.6 & 58.5 & 93.7 & 99.2 & 100 & 100 & 100 & 100 & 100 \\
 [.6ex]
 \multirow{2}{*}{Spiral data: case 2}
 & $\AUK$ & 19.6 & 36.8 & 57.2 & 71.8 & 89.8 & 97.8 & 99.8 & 100 & 100 \\
 [-.4ex]
 & \dHSIC & 10.5 & 19   & 41.4 & 69.1 & 95.3 & 99.9 & 100  & 100 & 100 \\
 [.6ex]
 \multirow{2}{*}{Spiral data: case 3}
 & $\AUK$ & 8.2 &  7.5 & 11.2 & 18.5 & 23.7 & 33.9 & 44.5 & 58.9 & 70.4 \\
 [-.4ex]
 & \dHSIC & 6.8 & 14.5 & 35.7 & 62.1 & 94.2 & 100  & 100  & 100  & 100  \\
 \bottomrule
 \end{tabular*}
 \end{table}

Table \ref{table.AUK-dHSIC.speed} provides the time, in seconds, needed to compute our $\AUK$-based and the \dHSIC-based tests of independence. This table, clearly shows that the $\AUK$-based test is very fast to compute, requiring, for example, for sample sizes 2000 and dimension 2 only 0.048 sec.

\begin{table}[htp]
 \caption{Speed time (in seconds) of the test-procedures $\AUK$ and \dHSIC for testing the total independence of a two-variable data set of size $n$.}
 \label{table.AUK-dHSIC.speed}
 \footnotesize
 \begin{tabular*}{\textwidth}
 {@{}@{\extracolsep{\fill}}l@{}c@{}c@{}S@{}S@{}S@{}S@{}S@{}S@{}S@{}}
 \toprule
 $n$ & {200} & {300} & {500} & {750} & {1000} & {1250} & {1500} & {1750} & {2000} \\
 \midrule
 $\AUK$ test & 0.000 & 0.000 &   0.015 &   0.016 &    0.017 &    0.031 &   0.032 &    0.047 &    0.048 \\
 \dHSIC test & 0.609 & 1.500 &   4.633 &  10.017 &   19.814 &   33.800 &  56.782 &   72.949 &  113.030 \\
 \bottomrule
 \end{tabular*}
 \end{table}

\section{A real data example}
\label{sec.RealData}

Alanine and aspartate aminotransferase, ALT and AST respectively, are markers of liver inflammation. Further, alkaline phosphatase (AP), as well as direct bilirubin (DB) are biomarkers of hepatic injury, measuring inflammation from viral, metabolic or autoimmune causes. Elevation of ALT and AST is referred to as hepatocellular liver injury pattern. In contrast, alkaline phosphatase and bilirubin are increased under conditions that lead to damage of the bile ducts that drain the liver and is referred to as cholestatic injury pattern. These parameters can be elevated due to bile duct obstruction by a stone or tumor or in the case of autoimmune diseases, in which body's immune system can attack the cholangiocytes. There can also be a mixed hepatocellular and cholestatic pattern. In the evaluation of liver disease, the pattern of injury guides the clinician toward the potential etiology and assists in directing the etiologic evaluation. Table \ref{table.DataSet} of the Supplementary Material presents a data set that contains measurements of (DB, AST, ALT, AP) on 208 patients with liver disease.

We estimate the indices $I$ and $I^*$ of the biomarkers' vector as $\hI_{208}(\mathrm{DB},\mathrm{AST},\mathrm{ALT},\mathrm{AP})=0.2546$ and $\hI^*_{208}(\mathrm{DB},\mathrm{AST},\mathrm{ALT},\mathrm{AP})=\varphi_4(0.2546)\approxeq0.561$, where the approximate polynomial $\varphi_4$ has been find by applying Algorithm \ref{algorithm-phi}. According to Definition \ref{def.level},
we conclude that the random vector of the biomarkes DB, AST, ALT and AP has a strong mutual dependence at the level $56.1\%$.

We next investigate the dependence structure of the random vector of these biomarkes. To accomplish this goal, we compute the indices $\hI_n$ and $\hI^*_n$ for all 2- and 3-dimensional sub-vectors of the vector of these biomarkers; Tables \ref{subtable.2D} and \ref{subtable.3D} present these results. According to Definition \ref{def.level}, we extract the following conclusions. Both $(\mathrm{DB,AST,ALT})$ and $(\mathrm{AST,ALT,AP})$ have strong mutual dependence of level $65.1\%$ and $60.5\%$ respectively, $(\mathrm{DB,AST,AP})$ has a mild mutual dependence of level $29.4\%$ and $(\mathrm{DB,ALT,AP})$ has a weak mutual dependence of level $23.2\%$; for the bidimensional sub-vectors, we have that $(\mathrm{AST,ALT})$ has a very strong mutual dependence of level $79.2\%$, $(\mathrm{DB,AST})$ has a mild mutual dependence of level $35.4\%$, and $(\mathrm{DB,ALT})$, $(\mathrm{DB,AP})$, $(\mathrm{AST,AP})$ and $(\mathrm{ALT,AP})$ have weak mutual dependence of level $23\%$, $20.4\%$, $17.1\%$ and $14.2\%$ respectively. In addition to the indices $\hI$ and $\hI^*$, Table \ref{subtable.2D} presents the corresponding sample Kendall's $\tau$ correlation coefficients. We observe that $\hI^*$  and $\h\tau$ recognize an association between two biomarkers almost in a similar way but, in general, the index $\hI^*$ indicates a stronger dependence level. Notice that Kendall's $\tau$ correlation coefficient cannot be used beyond the bivariate case.

The strong dependence between AST and ALT can be explained from the physiological point of view, since they are both released when hepatocytes undergo cell death. On the other hand, the weak dependence among the pairs of biomarkers can be explained by the fact that there are released under different mechanisms. When evaluating three biomarkers simultaneously, we identify the strongest dependency among the triplet $(\mathrm{DB,AST,ALT})$. This observation can be explained from the physiologic point of view because processes that cause hepatocyte injury would also have a likely predilection to affect hepatocyte synthetic ability. This results in the inability of hepatocyte to conjugate bilirubin, as assessed by DB. The dependence among the different biomarkers we observe can, therefore, be explained in the context of liver function and injury.

In conclusion, we say that the high level of the mutual dependence of the random vector of the biomarkers is mainly due to the very strong dependence between biomarkers AST and ALT, and secondly, to a small extent, on the dependence between the biomarkers bilirubin and AST.

\begin{table}[htp]
 \caption{The estimators of the indices $\hI_n$ and $\hI^*_n$ for the 2- and 3-dimensional sub-vectors of the vector of the biomarkers DB, AST, ALT and AP.}
 \label{table.3D,2D-indices}
  \begin{subtable}{.47\textwidth}
 \caption{2-dimensional sub-vectors.}
 \label{subtable.2D}
 \footnotesize
 \begin{tabular*}{\linewidth}
 {@{}@{\extracolsep{\fill}}l@{}c@{}c@{}c@{}c@{}c@{}c@{}}
 \toprule
  & \tiny(DB, AST) & \tiny(DB, ALT) & \tiny(DB, AP) & \tiny(AST,ALT) & \tiny(AST,AP) & \tiny(ALT, AP) \\
 \cmidrule{2-7}
  $\hI_n$   & 0.176 & 0.112 & 0.099 & 0.468 & 0.083 & 0.069 \\
  $\hI^*_n$ & 0.354 & 0.230 & 0.204 & 0.792 & 0.171 & 0.142 \\
  $\h\tau$  & 0.215 & 0.092 & 0.061 & 0.619 & 0.097 & 0.077 \\
 \bottomrule
 \end{tabular*}
 \end{subtable}
 \hfill
 \begin{subtable}{.47\textwidth}
 \caption{3-dimensional sub-vectors.}
 \label{subtable.3D}
 \footnotesize
 \begin{tabular*}{\linewidth}
 {@{}@{\extracolsep{\fill}}l@{}c@{}c@{}c@{}c@{}}
 \toprule
  & \tiny(DB, AST, ALT) & \tiny(DB, AST, AP) & \tiny(DB, ALT, AP) & \tiny(AST, ALT, AP) \\
 \cmidrule{2-5}
  $\hI_n$   & 0.322 & 0.146 & 0.118 & 0.296 \\
  $\hI^*_n$ & 0.651 & 0.294 & 0.232 & 0.605 \\
 \bottomrule
 \\
 \end{tabular*}
 \end{subtable}
 \end{table}

In addition to the preceding numerical results, Figures \ref{fig.2Dbiomarkers} and \ref{fig.3Dbiomarkers} shows the scatterplots of the bivariate and trivariate sub-vectors of the vector of biomarkers $(\mathrm{DB,AST,ALT,AP})$ respectively, illustrating graphically the dependence structure of these sub-vectors. Notice that even in the simplest case of bivariate random vectors it seems difficult to evaluate the strength of the dependence structure via scatterplots.

\begin{figure}[htp]
\begin{subfigure}[a]{.32\textwidth}
\FIG
{\resizebox{\linewidth}{!}{
\begin{tikzpicture}
\begin{axis}
  \addplot[blue, only marks, mark=o, mark options={scale=.5}] table [x=Blrbn,y=AST] {biomarkers.dat};
\end{axis}
\end{tikzpicture}
}}
{\includegraphics[width=\textwidth]{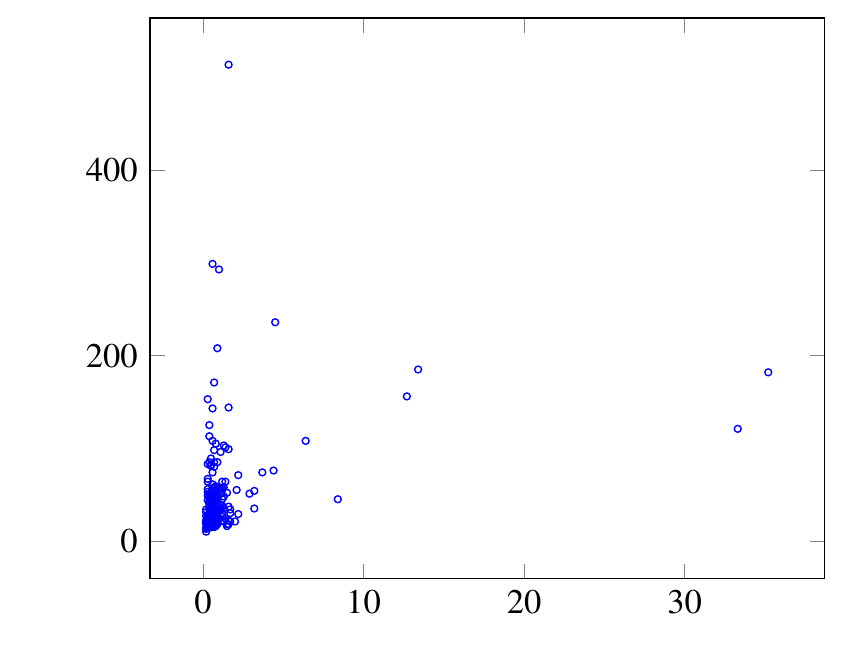}}
\caption{$(\mathrm{DB,AST})$.}
\label{sfig.B-AST}
\end{subfigure}
\hfill
\begin{subfigure}[a]{.32\textwidth}
\FIG
{\resizebox{\linewidth}{!}{
\begin{tikzpicture}
\begin{axis}
  \addplot[blue, only marks, mark=o, mark options={scale=.5}] table [x=Blrbn,y=ALT] {biomarkers.dat};
\end{axis}
\end{tikzpicture}
}}
{\includegraphics[width=\textwidth]{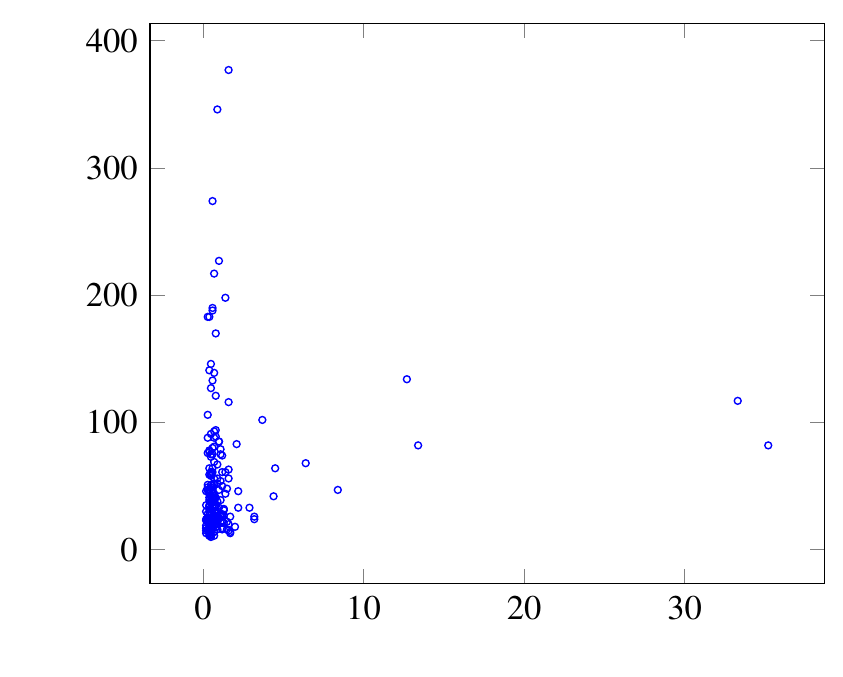}}
\caption{$(\mathrm{DB,ALT})$.}
\label{sfig.B-ALT}
\end{subfigure}
\hfill
\begin{subfigure}[a]{.32\textwidth}
\FIG
{\resizebox{\linewidth}{!}{
\begin{tikzpicture}
\begin{axis}
  \addplot[blue, only marks, mark=o, mark options={scale=.5}] table [x=Blrbn,y=AP] {biomarkers.dat};
\end{axis}
\end{tikzpicture}
}}
{\includegraphics[width=\textwidth]{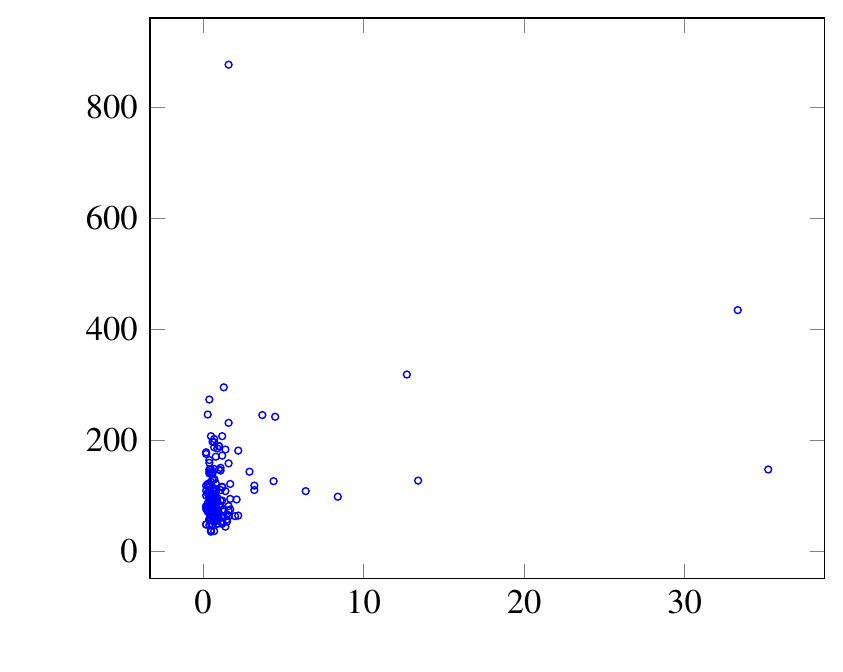}}
\caption{$(\mathrm{DB,AP})$.}
\label{sfig.B-AP}
\end{subfigure}
\medskip\linebreak
\begin{subfigure}[a]{.32\textwidth}
\FIG
{\resizebox{\linewidth}{!}{
\begin{tikzpicture}
\begin{axis}
  \addplot[blue, only marks, mark=o, mark options={scale=.5}] table [x=AST,y=ALT] {biomarkers.dat};
\end{axis}
\end{tikzpicture}
}}
{\includegraphics[width=\textwidth]{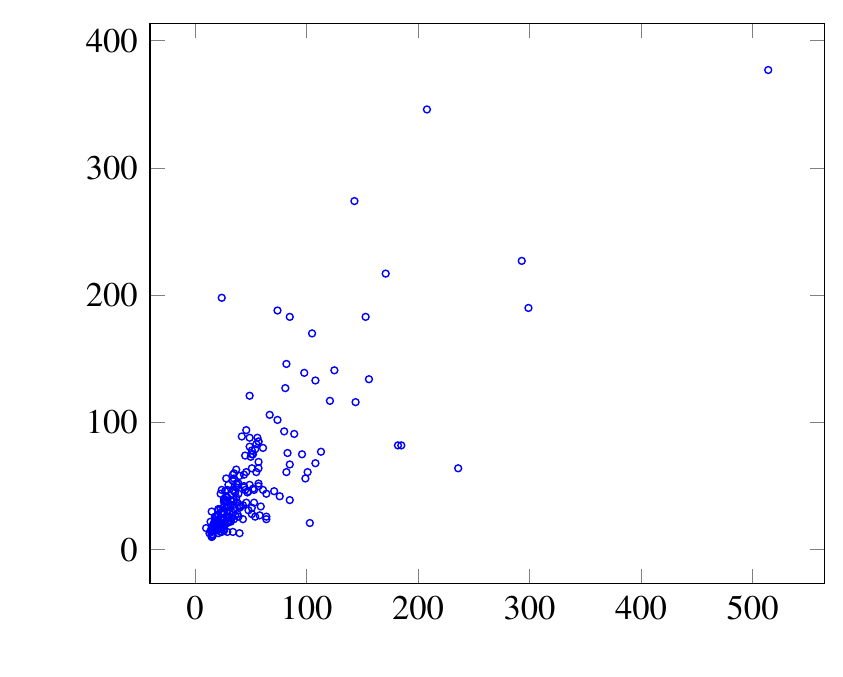}}
\caption{$(\mathrm{AST,ALT})$.}
\label{sfig.AST-ALT}
\end{subfigure}
\hfill
\begin{subfigure}[a]{.32\textwidth}
\FIG
{\resizebox{\linewidth}{!}{
\begin{tikzpicture}
\begin{axis}
  \addplot[blue, only marks, mark=o, mark options={scale=.5}] table [x=AST,y=AP] {biomarkers.dat};
\end{axis}
\end{tikzpicture}
}}
{\includegraphics[width=\textwidth]{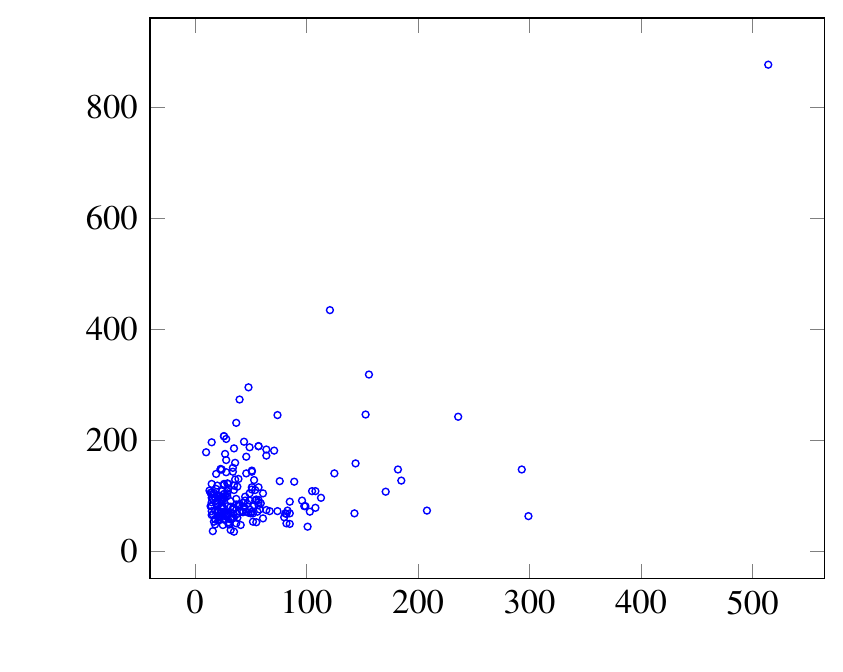}}
\caption{$(\mathrm{AST,AP})$.}
\label{sfig.AST-AP}
\end{subfigure}
\hfill
\begin{subfigure}[a]{.32\textwidth}
\FIG
{\resizebox{\linewidth}{!}{
\begin{tikzpicture}
\begin{axis}
  \addplot[blue, only marks, mark=o, mark options={scale=.5}] table [x=ALT,y=AP] {biomarkers.dat};
\end{axis}
\end{tikzpicture}
}}
{\includegraphics[width=\textwidth]{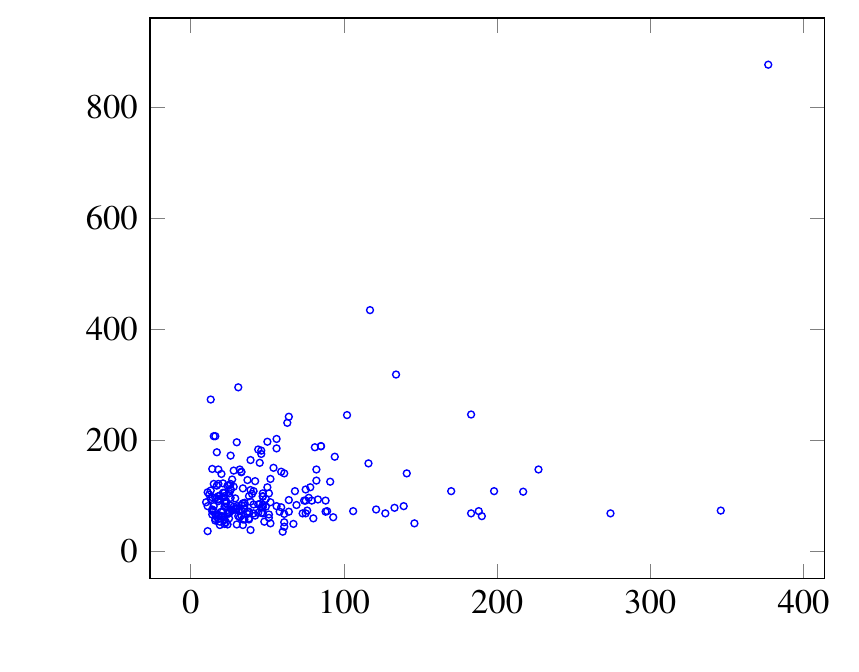}}
\caption{$(\mathrm{ALT,AP})$.}
\label{sfig.ALT-AP}
\end{subfigure}
\caption[The scatterplots of the bivariate sub-vectors of $(\mathrm{B,AST,ALT,AP})$.]{The scatterplots of the bivariate sub-vectors of $(\mathrm{DB,AST,ALT,AP})$.}
\label{fig.2Dbiomarkers}
\end{figure}

\begin{figure}[htp]
\begin{subfigure}[a]{.24\textwidth}
\FIG
{\resizebox{\linewidth}{!}{
\begin{tikzpicture}
\begin{axis}[view={30}{20}]
\addplot3+[draw=none, mark=o,color=blue,mark size=1] table {biomarkers123.dat};
\end{axis}
\end{tikzpicture}
}}
{\includegraphics[width=\textwidth]{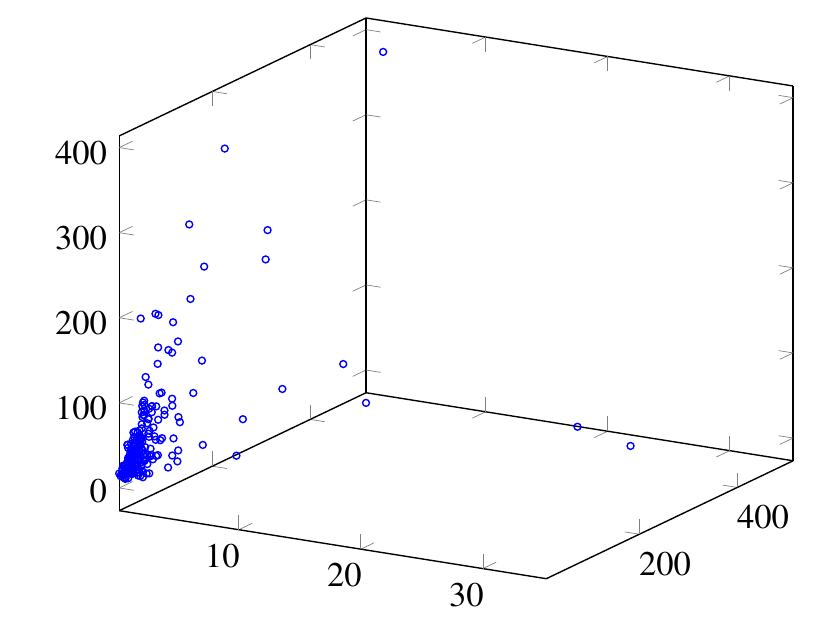}}
\caption{$(\mathrm{DB,AST,ALT})$.}
\label{sfig.B-AST-ALT}
\end{subfigure}
\hfill
\begin{subfigure}[a]{.24\textwidth}
\FIG
{\resizebox{\linewidth}{!}{
\begin{tikzpicture}
\begin{axis}[view={30}{20}]
\addplot3+[draw=none, mark=o,color=blue,mark size=1] table {biomarkers124.dat};
\end{axis}
\end{tikzpicture}
}}
{\includegraphics[width=\textwidth]{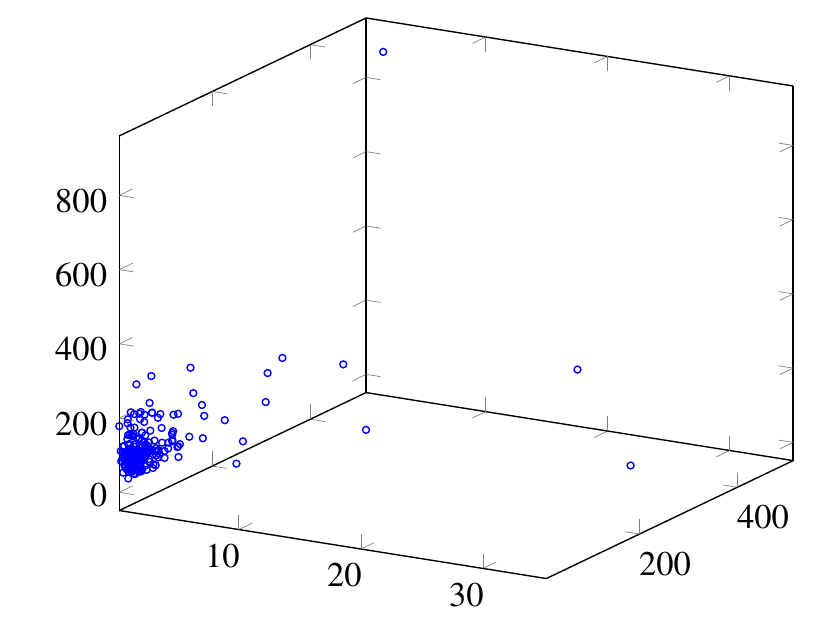}}
\caption{$(\mathrm{DB,AST,AP})$.}
\label{sfig.B-AST-AP}
\end{subfigure}
\hfill
\begin{subfigure}[a]{.24\textwidth}
\FIG
{\resizebox{\linewidth}{!}{
\begin{tikzpicture}
\begin{axis}[view={30}{20}]
\addplot3+[draw=none, mark=o,color=blue,mark size=1] table {biomarkers134.dat};
\end{axis}
\end{tikzpicture}
}}
{\includegraphics[width=\textwidth]{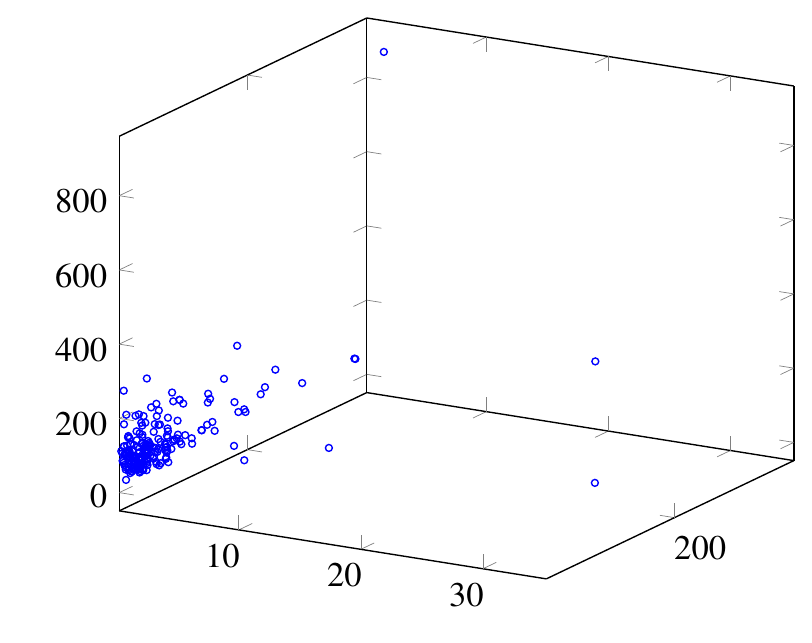}}
\caption{$(\mathrm{DB,ALT,AP})$.}
\label{sfig.B-ALT-AP}
\end{subfigure}
\hfill
\begin{subfigure}[a]{.24\textwidth}
\FIG
{\resizebox{\linewidth}{!}{
\begin{tikzpicture}
\begin{axis}[view={30}{20}]
\addplot3+[draw=none, mark=o,color=blue,mark size=1] table {biomarkers234.dat};
\end{axis}
\end{tikzpicture}
}}
{\includegraphics[width=\textwidth]{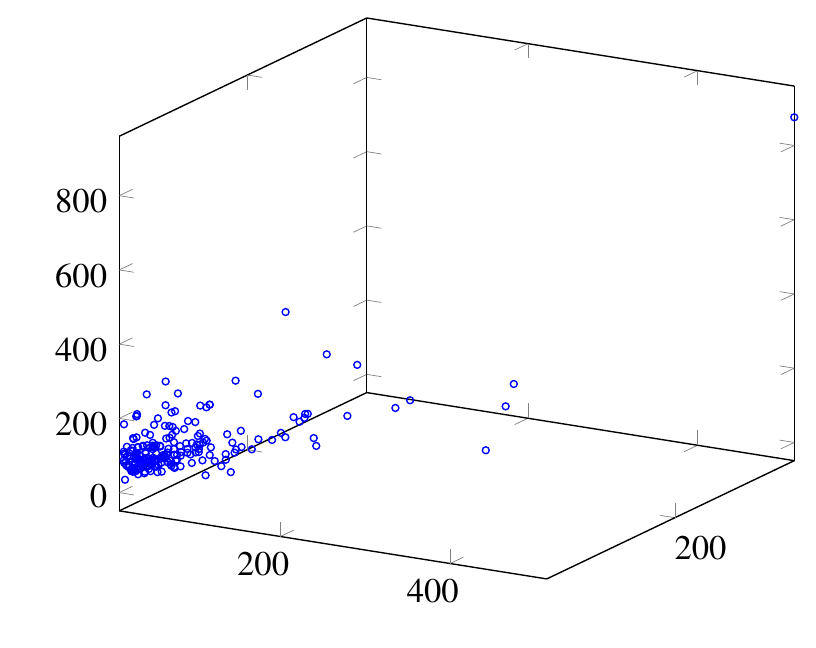}}
\caption{$(\mathrm{AST,ALT,AP})$.}
\label{sfig.AST-ALT-AP}
\end{subfigure}
\caption[The scatterplots of the trivariate sub-vectors of $(\mathrm{B,AST,ALT,AP})$.]{The scatterplots of the trivariate sub-vectors of $(\mathrm{DB,AST,ALT,AP})$.}
\label{fig.3Dbiomarkers}
\end{figure}

\section{Concluding Remarks and Discussion}
\label{sec.Discussion}

In this article we construct indices that measure joint or mutual dependence in a $d$-dimensional random vector. The indices depend on the Kendall process and satisfy the axioms set by \citeauthor{Renyi1959} that a function must satisfy in order to be called a measure of dependence. We evaluate the performance of the indices via simulation for a variety of distributions and sample sizes and present algorithms for their computation. Our proposed indices are copula-based and they are able to capture the degree of dependence exhibited in the data.

We also present a practical and fast, easy to compute, test for independence based on the estimated area under the Kendall curve, and propose an algorithm to compute quantities associated with its distribution.The performance of the proposed test statistic is evaluated via Monte Carlo simulation and is compared with the performance of the \dHSIC-based test in terms of power and level, in a wide range of distributions and sample sizes. Our proposed test either outperforms or is competitive with the \dHSIC-based test in most of the cases studied.

\begin{appendices}
\crefalias{section}{appsec}
{
\section{Lists of axioms for dependence measures}
\label{app:axioms}

According to \citet{SW1981}, we can present \citeauthor{Renyi1959}'s conditions regarding a measure of dependence $R(X,Y)$ for two continuously distributed
variables $X$ and $Y$ in the following form.

\begin{enumerate}[topsep=0ex, itemsep=0ex, labelindent=0pt, leftmargin=6.28ex, label=$A^{\mathrm{R}}_{\arabic*}\colon$, ref=\textrm{\textcolor{black}{$A^{\mathrm{R}}_{\arabic*}$}}]
 \item
 \label{R1}
 $R(X,Y)$ is defined for any $X$ and $Y$.

 \item
 \label{R2}
 $R(X,Y)=R(Y,X)$.

 \item
 \label{R3}
 $0\le R(X,Y)\le1$

 \item
 \label{R4}
 $R(X,Y)=0$ if and only if $X$ and $Y$ are independent.

 \item
 \label{R5}
 $R(X,Y)=1$ if and only if each of $X$, $Y$ is a.s.\ a strictly monotone function of the other.

 \item
 \label{R6}
 If $f$ and $g$ are strictly monotone a.s.\ on Range $X$ and Range $Y$, respectively, then $R\{f (X), g(Y)\} = R(X, Y)$.

 \item
 \label{R7}
 If the joint distribution of $X$ and $Y$ is bivariate normal, with correlation coefficient $\rho$, then $R(X, Y)$ is a strictly increasing function of $|\rho|$.

 \item
 \label{R8}
 If $(X, Y)$ and $(X_n, Y_n)$, $n=1,2,\ldots$, are pairs of random variables with joint distributions $H$ and $H_n$, respectively, and if the sequence $\{H_n\}$ converges weakly to $H$, then $\lim_{n\to\infty} R(X_n, Y_n) = R(X, Y)$.
 \end{enumerate}

\citet{MSz2019} propose four natural axioms for dependence measures as follows. Let $\Ss$ be a nonempty set of pairs of nondegenerate random variables $\bX$, $\bY$ taking values in Euclidean spaces or in real, separable Hilbert spaces $\Hh$. (Nondegenerate means that the random variable is not constant with probability 1.) Then $\varDelta \colon \Ss \to [0,1]$ is called a dependence measure on $\Ss$ if the following four axioms hold. In the axioms below we need similarity transformations of $\Hh$. Similarity of $\Hh$ is defined as a bijection (1--1 correspondence) from $\Hh$ onto itself that multiplies all distances by the same positive real number (scale). Similarities are known to be compositions of a translation, an orthogonal linear mapping, and a uniform scaling. They assume that if $(\bX,\bY)\in\Ss$ then $(L\bX,M\bY)\in\Ss$ for all similarity transformations $L$, $M$ of $\Hh$.
\begin{enumerate}[topsep=0ex, itemsep=0ex, labelindent=0pt, leftmargin=7.5ex, label=$A^{\mathrm{MS}}_{\arabic*}\colon$, ref=\textrm{\textcolor{black}{$A^{\mathrm{MS}}_{\arabic*}$}}]
 \item
 \label{MS1}
 $\varDelta(\bX,\bY)=0$ if and only if $\bX$ and $\bY$ are independent.

 \item
 \label{MS2}
 $\varDelta(\bX,\bY)$ is invariant with respect to all similarity transformations of $\Hh$; that is, $\varDelta(L\bX,M\bY)=\varDelta(\bX,\bY)$ where $L$, $M$ are similarity transformations of $\Hh$.

 \item
 \label{MS3}
 $\varDelta(\bX,\bY)=1$ if and only if $\bY=L\bX$ with probability 1, where $L$ is a similarity transformation of $\Hh$.

 \item
 \label{MS4}
 $\varDelta(\bX,\bY)$ is continuous; that is, if $(\bX_n,\bY_n)\in\Ss$, $n=1,2,\ldots$, such that for some positive constant $K$ we have $\E(|\bX_n|^2+|\bY_n|^2)\le K$, $n=1,2,\ldots$, and $(\bX_n,\bY_n)$ converges weakly (converges in distribution) to $(\bX,\bY)$ then $\varDelta(\bX_n,\bY_n)\to\varDelta(\bX,\bY)$. (The condition on the boundedness of second moments can be replaced by any other condition that guarantees the convergence of expectations: $\E(\bX_n)\to\E(\bX)$ and $\E(\bY_n)\to\E(\bY)$; such a condition is the uniform integrability of $\bX_n$, $\bY_n$ which follows from the boundedness of second moments.)
\end{enumerate}

\section{Kendall process}
\label{app:Gn}

First, we state a modified empirical Kendall cdf as it is introduced by \citet{GR1993}, \citep[see also][]{BGGR1996}. Let $\wT_{ni}=\{\# j\ne i\colon \bX_j\le\bX_i \textrm{ componentwise}\}/(n-1)$. Denote by $\wbbK_n$ the empirical distribution function of the $\wT_{ni}$'s that is an empirical Kendall cdf, i.e.\ an estimator of the Kendall cdf $K$.

We state now the following hypotheses.

\begin{enumerate}[topsep=0ex, itemsep=0ex, labelindent=0pt, leftmargin=6ex, label=$\mathrm{H}_{\arabic*}\colon$, ref=\textrm{\textcolor{black}{$\mathrm{H}_{\arabic*}$}}]
 \item
 \label{H1}
 The Kendall cdf $K(t)$ admits a continuous probability density function $k(t)$ on $(0, 1]$ that verifies $k(t)=o\{t^{-1/2}\ln^{-1/2-\varepsilon}(1/t)\}$ for some $\varepsilon>0$ as $t\to0$.

 \item
 \label{H2}
 There exists a version of the conditional distribution of the vector $(F_1(X_1),\ldots,F_d(X_d))$ given $T=t$ and a countable family $\CP$ of partitions $\CCC$ of $[0,1]^d$ into a finite number of Borel sets satisfying
\[
\inf_{\CCC\in\CP}\max_{C\in\CCC}\mathrm{diam}(C)=0,
\]
such that for all $C\in\CCC$, the mapping $t\mapsto\mu_t(C)=k(t)\Pr\{(F_1(X_1),\ldots,F_d(X_d))\in C|T=t\}$ is continuous on $(0,1]$ with $\mu_1(C)=k(1)\one{(1,\ldots,1)\in C}$.
\end{enumerate}

Given $0\le s,t\le1$, define
\[
Q(s,t)=\Pr\{F(\bX_1)\le s, \bX_1\le\bX_2|F(\bX_2)=t\}-tK(s)
\]
and
\[
R(s,t)=\Pr\{\bX_1\le\bX_2\wedge\bX_3|F(\bX_2)=s, H(\bX_3)=t\}-st,
\]
where $\bX_1,\ldots,\bX_3$ are independent and identically distributed random vectors from $F$, and $\bm{u}\wedge\bm{v}$ denotes the componentwise minimum between $\bm{u}$ and $\bm{v}$.

Consider now the Kendall process $\bbG_n(t)\doteq\surd{n}\{\wbbK_n(t)-K(t)\}$, $0\le t\le 1$. The following result is given by \citet[Theo.~1]{BGGR1996}, \citep[cf.][]{GR1993,Wellner2005,SDP2013}.

\begin{theorem}
\label{thm.Gn}
Under \ref{H1} and \ref{H2}, the Kendal process $\bbG_n(t)=\surd{n}\{\wbbK_n(t)-K(t)\}$, $0\le t\le 1$ converges in distribution to a Gaussian process $\bbG_K$ with zero mean and covariance function
\[
\varGamma(s,t)=K(s\wedge t)-K(s)K(t)+k(s)k(t)R(s,t)-k(t)Q(s,t)-k(s)Q(t,s).
\]
\end{theorem}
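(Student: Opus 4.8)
\emph{Proof proposal.} The plan is to follow the strategy of \citet{BGGR1996}: linearize the empirical Kendall process around an ``oracle'' process built from the unobservable probability integral transforms, identify the limit via a functional delta method, and then check finite-dimensional convergence together with asymptotic tightness.

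First I would set $V_i=F(\bX_i)$, so that $K(t)=\Pr(V_1\le t)$, and introduce the oracle empirical Kendall cdf $\bbK^{\circ}_n(t)=n^{-1}\sum_{i=1}^{n}\one{V_i\le t}$. Then I would split
\[
\bbG_n(t)=\surd{n}\bigl\{\wbbK_n(t)-\bbK^{\circ}_n(t)\bigr\}+\surd{n}\bigl\{\bbK^{\circ}_n(t)-K(t)\bigr\}.
\]
The second term is a textbook univariate empirical process based on the i.i.d.\ sample $V_1,\dots,V_n$; by Donsker's theorem it converges weakly in $\ell^{\infty}[0,1]$ to a $K$-Brownian bridge $\mathbb{B}_K$ with covariance $K(s\wedge t)-K(s)K(t)$, which accounts for the first group of terms in $\varGamma$.

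For the first term, the observation is that $\wT_{ni}$ differs from $V_i$ only through the multivariate ecdf: $\wT_{ni}=\tfrac{n}{n-1}\{\bbF_n(\bX_i)-n^{-1}\}$, and $\surd{n}\{\bbF_n(\bX_i)-V_i\}=\alpha_n(\bX_i)$ with $\alpha_n=\surd{n}(\bbF_n-F)$ the multivariate empirical process converging to an $F$-Brownian bridge on $\RR^d$. Expanding $\one{\wT_{ni}\le t}-\one{V_i\le t}$ and averaging over $i$, the indicator increments concentrate on the level set $\{V=t\}$; using the regularity of the conditional law of $(F_1(X_1),\dots,F_d(X_d))$ given $T=t$ from \ref{H2} and the existence and smoothness of the Kendall density $k$ from \ref{H1}, I would establish the uniform linearization
\[
\surd{n}\bigl\{\wbbK_n(t)-\bbK^{\circ}_n(t)\bigr\}=-\,k(t)\,\mathbb{W}_n(t)+o_P(1),
\]
where $\mathbb{W}_n$ is $\alpha_n$ integrated against the conditional measures $\mu_t$ of \ref{H2} and converges to a Gaussian process $\mathbb{W}$. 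Rigorously this is a Hadamard-differentiability argument for the map $\bbF_n\mapsto\wbbK_n$, combined with the joint weak convergence of $(\alpha_n,\bbK^{\circ}_n)$, which holds since both are continuous images of the same empirical process on $\RR^d$.

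It then follows that $\bbG_n\rightsquigarrow\bbG_K:=\mathbb{B}_K-k(\cdot)\,\mathbb{W}$, a mean-zero Gaussian process (a continuous linear image of a single Gaussian limit), whose covariance splits exactly into the three pieces in the statement: the bridge covariance $K(s\wedge t)-K(s)K(t)$; the term $k(s)k(t)R(s,t)$ coming from $\Cov\{\mathbb{W}(s),\mathbb{W}(t)\}$, with $R$ the three-sample quantity defined above; and $-k(t)Q(s,t)-k(s)Q(t,s)$ coming from the cross-covariances between $\mathbb{B}_K$ and $\mathbb{W}$, where $Q$ encodes the covariance between the event $\{F(\bX_1)\le s\}$ and the rank of $\bX_1$ relative to an independent copy conditioned on $F(\bX_2)=t$. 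Finite-dimensional convergence then follows from the multivariate central limit theorem applied to the linearized sums. I expect the main obstacle to be asymptotic tightness (equicontinuity) of $\bbG_n$ near $t=0$, where $k$ may be unbounded: this is precisely what the tail bound $k(t)=o\{t^{-1/2}\ln^{-1/2-\varepsilon}(1/t)\}$ in \ref{H1} is tailored to control, while the uniform regularity of the partition family $\CP$ and of $t\mapsto\mu_t(C)$ in \ref{H2} is what forces the linearization remainder to be uniformly $o_P(1)$; with tightness in hand, weak convergence in $\ell^{\infty}[0,1]$ follows from the finite-dimensional convergence already obtained.
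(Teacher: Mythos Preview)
The paper does not prove this theorem at all: it is stated in Appendix~\ref{app:Gn} purely as a quotation of Theorem~1 of \citet{BGGR1996} (with cross-references to \citealp{GR1993}, \citealp{Wellner2005}, and \citealp{SDP2013}), and is then used as a black box to derive the asymptotic normality of $\wAUK_n$ via the functional delta method. There is no argument in the paper to compare your proposal against.

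That said, your sketch is a faithful outline of the original Barbe--Genest--Ghoudi--R\'emillard strategy: the oracle decomposition $\bbG_n=\surd{n}(\wbbK_n-\bbK^{\circ}_n)+\surd{n}(\bbK^{\circ}_n-K)$, the identification of the second piece as a $K$-Brownian bridge, and the linearization of the first piece as $-k(t)\mathbb{W}_n(t)+o_P(1)$ with $\mathbb{W}_n$ built from the multivariate empirical process integrated against the conditional measures $\mu_t$. You have also correctly located where each hypothesis enters: \ref{H1} for tightness near $t=0$ (the growth bound on $k$ controls the modulus of continuity of the $k(t)\mathbb{W}(t)$ term), and \ref{H2} for the uniform $o_P(1)$ remainder in the linearization. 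If you were to flesh this out, the genuinely delicate step is the equicontinuity of $\surd{n}(\wbbK_n-\bbK^{\circ}_n)$, which in \citet{BGGR1996} is handled not by an abstract Hadamard-differentiability argument but by a direct chaining/entropy bound exploiting the partition family $\CP$ in \ref{H2}; your ``functional delta method'' phrasing is morally right but would need to be made precise in that spirit.
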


We restate here the hypothesis test $H_0\colon F=\prod_{j=1}^{d}F_j$ versus $H_1\colon F\ne\prod_{j=1}^{d}F_j$. Under $H_0$, $K_{H_0}=K_{\vP}$ and $k_{H_0}=k_{\vP}$; additionally, \citep[see][Example~1, p.~202]{BGGR1996},
\begin{equation}\label{eq.Kendall-process}
 \surd{n}\{\wbbK_n(t)-K_\vP(t)\}\stackrel{H_0}{\rightsquigarrow}\bbG_\vP,
\end{equation}
where $\bbG_\vP$ is a Gaussian process with zero mean and for $0\le s,t\le1$ covariance function as it given by \eqref{eq.cov-function}. In \eqref{eq.cov-function(b)} we have corrected a minor misprint \citep[see][p.~208, for the correct general formula]{BGGR1996}. Relations \eqref{eq.cov-function} lead to a cumbersome but explicit expression for the covariance function $\varGamma_\vP$, which reduces to $\varGamma_\vP(s,t)=s(t-1-\ln t)$, $0\le s \le t\le1$, when $d=2$.

A $\wbbK_n$-based estimator of $\AUK$ is $\wAUK_{n}=\E\left\{1-K_{\vP}(\wT_{n})\right\}=1-n^{-1}\sum_{i=1}^{n}K_{\vP}(\wT_{ni})$. Hence, using functional \emph{delta}-method, we get $\surd{n}(\wAUK_n-1/2)\stackrel{H_0}{\rightsquigarrow}\CN(0,\sigma_{\vP}^2)$, where $\sigma_{\vP}^2$ is defined by \eqref{eq.covergence(b)}. When $d=2$, after a straightforward calculation, we compute $\sigma_{\vP}^2=19/423$. For completing the proof of \eqref{eq.covergence} it is sufficient to prove that $\surd{n}(\hAUK_n-\wAUK_n)=o_p(1)$ as $n\to\infty$. For the proof of the asymptotic normality of both $\wAUK$ and $\hAUK$ see Appendix \ref{app:proofs}.

\section{Useful results}
\label{app:results}

The next result is the multivariate version of P\'olya's Theorem \citep[Prop.~1.16, p.~51]{Shao2003}. First, denote by $\|f\|_{\infty}\doteq\sup_{\bx\in\RR^d}|f(\bx)|$ the supremum norm of $f\colon\RR^d\to\RR$.

\begin{theorem}
\label{thm.Polya}
Let $F$ and $F_n$ are cdfs on $\RR^d$. If $F_n\Rightarrow F$ and $F$ is a continuous cdf, then $\|F_n-F\|_{\infty}\to0$.
\end{theorem}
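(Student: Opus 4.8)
The plan is to run the classical one‑dimensional P\'olya argument in $\RR^d$: upgrade the weak convergence to genuine pointwise convergence, cover $\RR^d$ by a finite grid of cells on which $F$ oscillates by less than $\eps$, and squeeze $F_n(\bx)-F(\bx)$ between the values at the corners of the cell containing $\bx$, using that $F$ and every $F_n$ are coordinatewise nondecreasing.

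First I would make two reductions. Since $F$ is a continuous cdf it has no discontinuity points, so $F_n\Rightarrow F$ already gives $F_n(\bx)\to F(\bx)$ for every $\bx\in\RR^d$. Secondly, I would pass to the compactified cube $\overline{\RR}^{d}$, where $\overline{\RR}=[-\infty,+\infty]$: any cdf $G$ on $\RR^d$ extends to a function $\overline G$ on $\overline{\RR}^{d}$ by coordinatewise limits, with $\overline G\equiv0$ on each face where some coordinate equals $-\infty$, $\overline G\equiv1$ at $(+\infty,\dots,+\infty)$, and $\overline G$ equal to a lower‑dimensional marginal cdf of $G$ on the remaining faces. The point I must check is that continuity of $F$ forces all of its marginals (of every order) to be continuous, so that $\overline F$ is continuous on the compact set $\overline{\RR}^{d}$: for first‑order marginals this follows from the uniform tail estimate
\begin{align*}
0\le F_j(x_j)-F(M,\dots,x_j,\dots,M)
&=\Pr\bigl(X_j\le x_j,\ \exists\, i\ne j:\ X_i>M\bigr)\\
&\le\sum_{i\ne j}\bigl(1-F_i(M)\bigr)\ \longrightarrow\ 0\qquad(M\to\infty),
\end{align*}
which exhibits $F_j$ as a uniform limit of continuous functions, and the same estimate handles marginals of every order. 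The analogous construction produces $\overline F_n$, and $\overline F_n\to\overline F$ pointwise on all of $\overline{\RR}^{d}$: on the interior this is the first reduction; on the mixed faces it is weak convergence of the corresponding marginals, inherited from $F_n\Rightarrow F$ via the continuous mapping theorem applied to coordinate projections and then upgraded to convergence everywhere because the limiting marginals are continuous; on the $-\infty$‑faces and at $(+\infty,\dots,+\infty)$ both sides are constant.

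The remainder is routine. Fix $\eps>0$. Since $\overline F$ is continuous on the compact set $\overline{\RR}^{d}$ it is uniformly continuous there, so I may choose a finite grid $G\subset\overline{\RR}^{d}$ — a product of finitely many points of $\overline{\RR}$ along each axis, including $\pm\infty$ — such that $\overline F$ varies by less than $\eps$ on each closed grid cell. As $G$ is finite, pointwise convergence yields $N$ with $|\overline F_n(\bm{g})-\overline F(\bm{g})|<\eps$ for all $\bm{g}\in G$ and all $n\ge N$. Given $\bx\in\RR^d$, let $[\bm{a},\bm{b}]$ (with $\bm{a},\bm{b}\in G$) be the grid cell containing $\bx$, so $\bm{a}\le\bx\le\bm{b}$ coordinatewise. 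Coordinatewise monotonicity of $\overline F$ and $\overline F_n$ gives $\overline F(\bm{a})\le F(\bx)\le\overline F(\bm{b})$ and $\overline F_n(\bm{a})\le F_n(\bx)\le\overline F_n(\bm{b})$, so for $n\ge N$
\[
|F_n(\bx)-F(\bx)|\le\bigl\{\overline F(\bm{b})-\overline F(\bm{a})\bigr\}+\max_{\bm{g}\in G}\bigl|\overline F_n(\bm{g})-\overline F(\bm{g})\bigr|<2\eps .
\]
Since $\bx$ was arbitrary, $\|F_n-F\|_\infty\le2\eps$ for $n\ge N$; letting $\eps\downarrow0$ completes the proof. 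The only genuine obstacle is the compactification step: one has to verify carefully that continuity of the joint cdf propagates to every marginal (so that $\overline F$ is continuous up to the boundary at infinity) and that $\overline F_n\to\overline F$ on the mixed faces — the uniform tail bounds of the type displayed above, together with the continuous mapping theorem for the projections, are exactly what make this work. Once $\overline F$ is known uniformly continuous on a compact set, the grid‑and‑squeeze part is verbatim the one‑dimensional argument; alternatively one could avoid the compactification and treat the tails of $\RR^d$ directly, choosing $M$ so that $1-F(M,\dots,M)\le\sum_{j}(1-F_j(M))<\eps$ and $F(\bx)<\eps$ whenever some coordinate of $\bx$ is $\le-M$, at the cost of somewhat heavier bookkeeping.
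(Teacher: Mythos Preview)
The paper does not supply its own proof of this theorem; it simply states the result as the multivariate version of P\'olya's theorem and refers to \citet[Prop.~1.16, p.~51]{Shao2003}. Your argument is correct and is the classical approach---upgrade weak convergence to pointwise convergence using continuity of the limit, compactify $\RR^d$ so that $\overline F$ is uniformly continuous on $\overline{\RR}^{\,d}$, choose a finite grid on which $\overline F$ oscillates by less than $\eps$, and squeeze $F_n(\bx)-F(\bx)$ via coordinatewise monotonicity---so there is nothing to compare: you have furnished a complete proof where the paper gives only a citation. The one place that deserves care, and that you handle correctly, is continuity of $\overline F$ across the boundary faces; your uniform tail bound $|F_j(x_j)-F(M,\dots,x_j,\dots,M)|\le\sum_{i\ne j}\{1-F_i(M)\}$ (and its higher-order analogues) is exactly what is needed, and it simultaneously shows that continuity of $F$ forces continuity of every marginal, which is consistent with the paper's convention (stated in Section~\ref{sec.motiv}) that a ``continuous cdf'' is one with continuous univariate marginals.
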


\begin{theorem}
\label{thm.Tn->T}
Let $\bX\sim F$ and $\bX_n\sim F_n$ on $\RR^d$ such that $\bX_n\rightsquigarrow\bX$. Consider the random variables $T\doteq F(\bX)$ and $T_n\doteq F_n(\bX_n)$. If $F$ is continuous, $T_n\rightsquigarrow T$.
\end{theorem}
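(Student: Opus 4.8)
The plan is to verify convergence in distribution directly, using bounded continuous test functions. Fix $\phi\colon[0,1]\to\RR$ bounded and continuous; it then suffices to show $\E\{\phi(T_n)\}\to\E\{\phi(T)\}$. The key decomposition I would use is
\[
\phi(T_n)=\phi(F_n(\bX_n))=\big[\phi(F_n(\bX_n))-\phi(F(\bX_n))\big]+\phi(F(\bX_n)),
\]
which splits the task into controlling a ``uniform approximation'' term and a ``weak convergence'' term. The two ingredients are Theorem \ref{thm.Polya} (the multivariate P\'olya theorem) and the observation that $\phi\circ F$ is again bounded and continuous on $\RR^d$.

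For the first term I would argue as follows. Saying $\bX_n\rightsquigarrow\bX$ means precisely that the cdfs satisfy $F_n(\bx)\to F(\bx)$ at every continuity point of $F$; since $F$ is continuous this holds for all $\bx\in\RR^d$, i.e.\ $F_n\Rightarrow F$. Theorem \ref{thm.Polya} then upgrades this to $\|F_n-F\|_{\infty}\to0$. Since $\phi$ is uniformly continuous on the compact interval $[0,1]$, for every $\eps>0$ there is $\delta>0$ with $|\phi(s)-\phi(s')|<\eps$ whenever $|s-s'|<\delta$; hence once $n$ is large enough that $\|F_n-F\|_{\infty}<\delta$ we have $|\phi(F_n(\bX_n))-\phi(F(\bX_n))|<\eps$ pointwise, and therefore $|\E\{\phi(F_n(\bX_n))\}-\E\{\phi(F(\bX_n))\}|<\eps$. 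So this term vanishes in the limit.

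For the second term, set $\psi\doteq\phi\circ F\colon\RR^d\to\RR$; it is bounded (by $\sup|\phi|$) and continuous, being a composition of the continuous maps $F$ and $\phi$. Since $\bX_n\rightsquigarrow\bX$, the definition of weak convergence yields $\E\{\psi(\bX_n)\}\to\E\{\psi(\bX)\}$, that is, $\E\{\phi(F(\bX_n))\}\to\E\{\phi(F(\bX))\}=\E\{\phi(T)\}$. Putting the two pieces together gives $\limsup_n|\E\{\phi(T_n)\}-\E\{\phi(T)\}|\le\eps$ for arbitrary $\eps>0$, hence $\E\{\phi(T_n)\}\to\E\{\phi(T)\}$, which is the assertion.

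I expect no genuine obstacle here; the one point deserving care is that $T_n$ is built from $\bX_n$'s \emph{own} cdf $F_n$, not from the limit $F$, so pointwise convergence $F_n\to F$ alone would not justify replacing $F_n(\bX_n)$ by $F(\bX_n)$ --- this is exactly where the uniform convergence delivered by P\'olya's theorem (and hence the continuity hypothesis on $F$) is needed. An equivalent alternative would be to invoke the Skorokhod representation theorem to realize $\bX_n\to\bX$ almost surely on a common probability space and then prove $F_n(\bX_n)\to F(\bX)$ a.s.\ via the same splitting, but the test-function version above avoids that machinery.
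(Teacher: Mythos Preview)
Your proof is correct, and it reaches the conclusion by a genuinely different route from the paper's. The paper invokes Skorokhod's representation theorem to realise $\bX_n\to\bX$ almost surely on a common probability space, then uses the triangle inequality
\[
|F_n(\cbX_n)-F(\cbX)|\le|F_n(\cbX_n)-F(\cbX_n)|+|F(\cbX_n)-F(\cbX)|,
\]
bounding the first term by $\|F_n-F\|_{\infty}\to0$ via Theorem~\ref{thm.Polya} and the second by continuity of $F$, to obtain $\cT_n\to\cT$ a.s.\ and hence in distribution. You instead work directly with the Portmanteau characterisation, splitting $\phi(T_n)$ rather than $T_n$ itself; your first term is handled by the same P\'olya argument (now combined with uniform continuity of $\phi$ on $[0,1]$), and your second term by observing that $\phi\circ F$ is already a valid test function for $\bX_n\rightsquigarrow\bX$. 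Both proofs hinge on the same key point --- P\'olya's uniform convergence --- but yours avoids the Skorokhod machinery entirely, which is a mild simplification. Amusingly, you identify the Skorokhod route as an alternative; that is exactly the route the paper takes.
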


\begin{proof}
By Skorokhod's representation Theorem, there are $\wbX$ and $\wbX_n$ defined in the same probability space $(\varOmega,\CA,\Pr)$ such that $\cbX\sim F$, $\cbX_n\sim F_n$ and
\begin{equation}\label{eq1}
\cbX_n(\omega)\to\cbX(\omega)\quad\textrm{for each} \ \omega\in\varOmega,
\end{equation}
\citep[see, for example,][Theo.~25.6]{Billingsley2013}. Consider now the random variables $\cT\doteq F(\cbX)$ and $\cT_n\doteq F_n(\cbX_n)$; then, applying the triangle inequality, for each $\omega\in\varOmega$,
\[
 |\cT_n(\omega)-\cT(\omega)|\le|F_n\{\cbX_n(\omega)\}-F\{\cbX_n(\omega)\}|+|F\{\cbX_n(\omega)\}-F\{\cbX(\omega)\}|.
\]
Because of continuity of $F$, \eqref{eq1} implies that
\[
|F\{\cbX_n(\omega)\}-F\{\cbX(\omega)\}|\to0.
\]
Using Theorem \ref{thm.Polya}, for each $\omega\in\varOmega$,
\[
|F_n\{\cbX_n(\omega)\}-F\{\cbX_n(\omega)\}|\le\|F_n-F\|_{\infty}\to0.
\]
Consequently, $\cT_n(\omega)\to\cT(\omega)$ for each $\omega\in\varOmega$; this implies that $\cT_n\as\cT$ which gives $\cT_n\rightsquigarrow\cT$, and so $T_n\rightsquigarrow T$.
\end{proof}

From Theorem \ref{thm.Tn->T} and \citet[Theo.~3.1, p.~274]{Faugeras2013} the following theorem follows.

\begin{theorem}
\label{thm.hT->T}
Let $\bX\sim F$ and $T$ as in Theorem {\rm\ref{thm.Polya}}, and $\{\bX_n\}$ be an independent and identically distributed sequence from $F$. Consider the sequences of the ecdfs $\bbF_{n}$ as well as its empirical Kendall distribution functions $\bbK_{n}$. If $\hT_n\sim \bbK_{n}$, then $\hT_n\rightsquigarrow T$.
\end{theorem}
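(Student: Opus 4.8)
The plan is to reduce the claim to the ``deterministic'' statement of Theorem~\ref{thm.Tn->T} by conditioning on the sample. First I would spell out the exact meaning of $\hT_n\sim\bbK_n$: if, given $\bX_1,\ldots,\bX_n$, we let $J$ be uniform on $\{1,\ldots,n\}$ and independent of the sample and set $\bX^{(n)}\doteq\bX_J$, then $\bX^{(n)}\sim\bbF_n$ and $\hT_n\doteq\bbF_n(\bX^{(n)})$ has conditional law $\bbK_n$. Hence it suffices to prove $\bbF_n(\bX^{(n)})\rightsquigarrow F(\bX)=T$.

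The first genuine ingredient is the multivariate Glivenko--Cantelli theorem: on an event $\varOmega_0$ of probability one, $\|\bbF_n-F\|_\infty\to0$. On $\varOmega_0$ the empirical cdfs $\bbF_n$ converge weakly to $F$ as (deterministic) cdfs, so for almost every realization of the sample the conditional law of $\bX^{(n)}$ converges weakly to $F$, i.e.\ $\bX^{(n)}\rightsquigarrow\bX$ conditionally. Here $\bbF_n$ is a genuine (discrete) cdf and $F$ is continuous, so on $\varOmega_0$ the hypotheses of Theorem~\ref{thm.Tn->T} are met with $F_n\doteq\bbF_n$; that theorem then yields, conditionally on the sample, $\hT_n=\bbF_n(\bX^{(n)})\rightsquigarrow F(\bX)=T$.

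It remains to pass from this conditional convergence to the unconditional statement. Writing $\Pr(\hT_n\le t)=\E\{\bbK_n(t)\}=\E\{\Pr(\hT_n\le t\mid\bX_1,\ldots,\bX_n)\}$ and using that the integrand is bounded by $1$ and, on $\varOmega_0$, converges to $K(t)$ at every continuity point $t$ of $K$, bounded convergence gives $\Pr(\hT_n\le t)\to K(t)$ at all such $t$, that is, $\hT_n\rightsquigarrow T$. This final transfer of a conditional weak limit to an unconditional one, with the conditioning variable being the empirical cdf, is precisely the content of \citet[Theo.~3.1, p.~274]{Faugeras2013}, which I would invoke to keep the argument short.

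The main obstacle, and the reason the statement is not an immediate corollary of Theorem~\ref{thm.Tn->T}, is the double use of the sample: the pseudo-observations $\bbF_n(\bX_i)$ are evaluated at the very points $\bX_i$ that define $\bbF_n$, so $\bbF_n$ cannot be treated as a fixed cdf independent of the evaluation points. Conditioning on the sample freezes $\bbF_n$ and is exactly what makes Theorem~\ref{thm.Tn->T} applicable pathwise on $\varOmega_0$; a minor secondary point is that $K$ need not be continuous, so convergence is asserted only at its continuity points, which is all weak convergence requires.
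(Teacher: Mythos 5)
Your proposal is correct and follows essentially the same route as the paper, whose entire proof is the single sentence that the result ``follows from Theorem~\ref{thm.Tn->T} and \citet[Theo.~3.1, p.~274]{Faugeras2013}.'' You have simply filled in the details that citation leaves implicit: Glivenko--Cantelli to apply Theorem~\ref{thm.Tn->T} pathwise conditionally on the sample, and the Faugeras/bounded-convergence step to pass to the unconditional weak limit.
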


\section{Proofs of the main manuscript}
\label{app:proofs}


\begin{proof}[Proof of Equation \eqref{eq.c_d}]
Suppose $Y$ is a continuous rv and consider the $d$-dimensional rvs $\bY^{(1)}=(Y,\ldots,Y)$ and $\bY^{(2)}=(Y,\ldots,Y,-Y,\ldots,-Y)^T$ (the first $p\ge1$ components of $\bY^{(2)}$ are $Y$ and the last $d-p\ge1$ are $-Y$). For each $\by=(y_1,\dots,y_d)^T\in\RR^d$, one can easily see that $F_{\bY^{(1)}}(\by)=\Pr(Y\le\min\{y_1,\ldots,y_d\})$; consequently, $T_{\bY^{(1)}}=F_{\bY^{(1)}}(\bY^{(1)})=F_Y(Y)\sim U(0,1)$, and so $K_{\bY^{(1)}}(t)=t$, $t\in[0,1]$. The $\AUK$ is
\[
\AUK_{\bY^{(1)}}=\int_0^1 t\frac{(-\ln t)^{d-1}}{(d-1)!}\ud{t}=2^{-d}\int_0^{\infty} \frac{2^d}{(d-1)!}x^{d-1}e^{-2x}\ud{x}=2^{-d}.
\]
In the case of $\bY^{(2)}$, we can easily to verify $F_{\bY^{(2)}}(\by)=\Pr(\max\{-y_{p+1},\ldots,-y_{d}\}\le Y\le\min\{y_1,\ldots,y_p\})=\max\{0,F_Y(\min\{y_1,\ldots,y_p\})-F_Y(\max\{-y_{p+1},\ldots,-y_{d}\})\}$; therefore, $T_{\bY^{(2)}}=\max\{0,F_Y(Y)-F_Y(Y)\}=0$ with probability one, and so $K_{\bY^{(2)}}(t)=1$, $t\in[0,1]$. Thus,
\[
\AUK_{\bY^{(2)}}=\Pr(T_{\bY^{(2)}}\le \vP)=\Pr(0\le \vP)=1.
\]

For a continuous rv $X$, consider the $d$-dimensional $\wbX=(X,\ldots,X)^T$. The set $\CR(\wbX)$ contains 2 rvs of the kind of $\bY^{(1)}$, specifically $(X,\ldots,X)^T$ and $(-X,\ldots,-X)^T$, and $2^d-2$ of the kind of $\bY^{(2)}$ (note that the coordinates of minus do not play any role). Hence, the corresponding vector $\w\bD$ has 2 elements equals $2^{-d}$ and $2^d-2$ elements equals 1; so, $\|\w\bD-\bDelta\|=\left(2^{d-2}-2^{1-d}+2^{1-2d}\right)^{1/2}$, completing the proof.
\end{proof}

\begin{proof}[Proof of Proposition \ref{prop.A1-A8}]
\eqref{prop.A1-A8(a)}
By definition of $I$, \ref{A1} holds in $\CX^d_0$; while \ref{A2} is obvious. Regarding \ref{A6}, for any such $f_i$s define $s^{f_i}=1$ if $f_i$ is increasing function and $-1$ when $f_i$ is decreasing function, and set $\bs^{f}=(s^{f_1},\ldots,s^{f_d})^T\in\SSS^d$, $\wbX=\diag(\bs^{f})\bX$ and $\w{f}_i(x_i)=f_i(s^{f_i}x_i)$, $i=1,\ldots,d$ (strictly increasing functions). Then, obviously, the random vectors $f(\bX)=(f_1(X_1),\ldots,f_d(X_d))^T$ and $\w{f}(\bX)=(\w{f}_1(X_1),\ldots,\w{f}_d(X_d))^T$ belong to $\CX^d_0$ and $\CR\{f(\bX)\}=\CR\{\w{f}(\bX)\}$. Observe now that for each $\bs_j=(s_{1j},\ldots,s_{dj})^T\in\SSS^d$, $\diag(\bs_j)f(\bX)=\diag(\bs_j)\w{f}(\wbX)=\w{f}^{(j)}\{\diag(\bs_j)\wbX\}$, where $\w{f}^{(j)}=\big(\w{f}^{(j)}_1,\ldots,\w{f}^{(j)}_d\big)^T$ with $\w{f}^{(j)}_i(x_i)=s_{ij}\w{f}_i(s_{ij}x_i)$ to be an strictly increasing function for each $i=1,\ldots,d$. Hence, it becomes clear that it is enough to prove the desired result for strictly increasing functions; to prove this, one can use the similar arguments as in \citet[proof of Proposition~4.4, p.~439]{VAM2019}. With regard to \ref{A7}, only for $d=2$, it has been studied by \citet{VAM2019}. Finally, suppose $\bX_n\rightsquigarrow\bX\in\CX^d_0$. Since $K_{\vP}(t)$, $0\le t\le1$, is a continuous and bounded function, from Theorem \ref{thm.Tn->T} (see Appendix \ref{app:results}) and Portmanteau theorem we have that, for each $j=1,\ldots,2^d$, $\AUK_{nj}\to\AUK_j$, and so, $I(\bX_n)\to I(\bX)$; that, is \ref{A8} holds.

\eqref{prop.A1-A8(b)}
If $X_1,\ldots,X_d$ are independent, one can easily see that $\AUK_j=0$ for all $j=1,\ldots,2^d$, and so $I=0$. Conversely, by definition, $I=0$ implies that $\AUK_j=0$ for all $j=1,\ldots,2^d$. Since $\bX\in\CX^d_1$, there is at least an subscript $j'\in\{1,\ldots,2^d\}$ for which $K_{j'}(t)=K_{\vP}(t)$ for $0\le t\le1$. Hence, from Sklar's Theorem \citep[see][]{Sklar1959,DF-SS2012,SDP2013} the corresponding (unique) copula to $F_{\bX_{j'}}$ is the copula $\bC_{0}(\bu)=u_1\cdots u_d$, $\bu\in[0,1]^d$, that corresponds to the independence case. Consequently, the components of $\bX_{j'}$ are totally independent which is equivalent to the components of $\bX$ are totally independent, i.e.\ $F_{\bX}(\bx)=\prod_{i=1}^{d}F_{X_i}(x_i)$ for all $\bx\in\RR^d$.

\eqref{prop.A1-A8(c)}
It is clear that $I\ge0$ for all $\bX\in\CX^d_0$. Suppose that $\bX\in\CX^d_2$. Based on the Fr\'{e}chet-Hoeffding upper-bound \citep[see, for example,][]{Ruschendorf2017}, we have $K(t)=\Pr\{F(\bX)\le t\}\ge\Pr\left[\min_{j=1,\ldots,d}\{F_j(X_j)\}\le t\right]\ge\Pr\{F_1(X_1)\le t\}=t$ for all $0\le t\le1$; also, obviously, $K(t)\le1$ for all $0\le t\le1$. Since $\bX\in\CX^d_2$, there are at least two $\bX_{j'},\bX_{j''}\in\CR(\bX)$ for which \ref{C2} holds; so, see in the proof of \eqref{eq.c_d} above, $\sum_{j=j',j''}(\AUK_j-1/2)^2\le2(1-2^{-d})^2$. On the other hand, it is obvious that $\sum_{j\ne j',j''}(\AUK_j-1/2)^2\le(2^d-2)/4$. Hence, $\sum_{j}(\AUK_j-1/2)^2\le2^{d-2}-2^{1-d}+2^{1-2d}$, which implies that $I\le1$. Suppose now $\bX\in\CX^d_2$ with $I=1$. Then, there is a subscript $j'$ such that $K_{j'}(t)=t$, $0\le t\le1$. As in \eqref{prop.A1-A8(b)}, from Sklar's Theorem the corresponding (unique) copula to $F_{\bX_{j'}}$ is the copula $\bC_{1}(\bu)=\min\{u_1,\ldots,u_d\}$, $\bu\in[0,1]^d$, that corresponds to the total dependence case.
\end{proof}

\begin{proof}[Proof of Proposition \ref{prop.B1-B4}]
\eqref{prop.B1-B4(a)}
See Proposition \ref{prop.A1-A8}\ref{prop.A1-A8(b)}.
\eqref{prop.B1-B4(b)}
See Proposition \ref{prop.A1-A8}\ref{prop.A1-A8(a)}.
\eqref{prop.B1-B4(c)}
Similar to Proposition \ref{prop.A1-A8}\ref{prop.A1-A8(c)}.
\end{proof}

\begin{proof}[Proof of Proposition \eqref{eq.as->AUK}]
Suffice it to prove the result for the original (non-rotated) data. Set $D_n\doteq\|\bbF_n-F\|_{\infty}$. The multivariate version of Dvoretzky-Kiefer-Wolfowitz inequality says that \citep[see][Eq.~(1.2), p.~649]{Kiefer1961}, there are positive constants $c'_d$ and $c_d$ (do not depend on $F$) such that, $\Pr(D_n>z)\le c'_d\exp(-c_dnz^2)$ for all $n>0$ and $z\ge0$. For $\epsilon>0$, set $\theta_\epsilon=\exp(-c_d\epsilon^2)\in(0,1)$; then,
\[
\Pr\left(\sup_{m\ge n}D_m>\epsilon\right)\le\sum_{m=n}^{\infty}\Pr(D_m>\epsilon)\le c'_d\sum_{m=n}^{\infty}\theta_\epsilon^m=\frac{c'_d}{1-\theta_\epsilon}\theta_\epsilon^n\to0,
\textrm{ as } n\to\infty.
\]
Therefore, the multivariate version of Glivenko-Cantelli Theorem follows, that is,
\[
\|\bbF_n-F\|_{\infty}\as0.
\]
Consider $\xi(\bx)\doteq K_{\vP}\{F(\bx)\}$ and $\hxi_n(\bx)\doteq K_{\vP}\{\bbF_n(\bx)\}$. Since $K_{\vP}$ is a continuous and bounded function, from the preceding result we have that $\hxi_n$ convergence almost surely to $\xi$ (uniformly). Let $P$ and $\bbP_n$ denote the probability measures that correspond to $F$ and $\bbF_{n}$ respectively. Because of bounding of $\xi$, $\hAUK=\int \hxi_n\ud{\bbP_n}\as\int \xi\ud{P}=\AUK$ \citep[see][Sec.~19.4]{vdV1998}.
\end{proof}

\begin{proof}[Proof of the asymptotic normality of $\wAUK$ and $\hAUK$]
Consider the map $K\mapsto\psi(K)$ defined by
\[
\psi(K)\doteq\int_0^1\{1-K_{\vP}(t)\}{\ud{K(t)}}.
\]
Observe that $\AUK=\psi(K)$ and $\wAUK=\psi(\wbbK_n)$. Since $\AUK_{H_0}=\psi(K_\vP)=1/2$, using \eqref{eq.Kendall-process}, an application of functional \emph{delta}-method \citep[see][Theo.~20.8, p.~297]{vdV1998} implies that
\[
\surd{n}(\wAUK_n-1/2)\stackrel{H_0}{\rightsquigarrow}\psi'_{\vP}(\bbG_\vP),
\]
where $\psi'_\vP(H)=\int_0^1k_{\vP}(t)H(t)\ud{t}$ is the Hadamard derivative of $\psi$ at $K_\vP$. Due to the linearity of $\psi'_\vP(\cdot)$, it follows that $\psi'_{\vP}(\bbG_\vP)\eqd\CN(0,\sigma_{\vP}^2)$.

We now need to prove that $\surd{n}(\hAUK_n-\wAUK_n)=o_p(1)$ as $n\to\infty$. The Kendall ecdf $\bbK_n$ is the ecdf of the values $\hT_{ni}=\{\# j\colon \bX_j\le\bX_i \textrm{ componentwise}\}/n$; so $\hAUK_n=1-n^{-1}\sum_{i=1}^{n}K_{\vP}(\hT_{ni})$ while $\wAUK_n=1-n^{-1}\sum_{i=1}^{n}K_{\vP}(\wT_{ni})$. Therefore, $n|\hAUK_n-\wAUK_n|=|\sum_{i=1}^{n}K_{\vP}(\hT_{ni})-K_{\vP}(\wT_{ni})|$. By definition, one can see that $\hT_{ni}-\wT_{ni}=(1-\wT_{ni})/n\in[0,1/n]$; hence, $\wT_{ni}\le\hT_{ni}\le\wT_{ni}+1/n$. Suppose $g(x)=x-x\ln x$, $x\ge0$, with $g(0)=0$ and for each fixed $\varepsilon>0$ consider the function $h_{\varepsilon}(x)=g(x+\varepsilon)-g(x+\varepsilon)$, $x\ge0$. Then, $h_{\varepsilon}$ is a continuous function and $h'_{\varepsilon}(x)=\ln\{x/(x+\epsilon)\}<0$ for all $x>0$. Hence, $\max_{x\in[0,1]}h_{\varepsilon}(x)=h_{\varepsilon}(0)=\varepsilon-\varepsilon\ln\varepsilon$. In view of the preceding results,
\[
\sum_{i=1}^{n}K_{\vP}(\wT_{ni})\le\sum_{i=1}^{n}K_{\vP}(\hT_{ni})\le\sum_{i=1}^{n}K_{\vP}(\wT_{ni}+1/n)\le\sum_{i=1}^{n}K_{\vP}(\wT_{ni})+\{1+\ln n\}.
\]
From this immediately follows that $\wAUK_n\le\hAUK_n$ and $\hAUK_n\le\wAUK_n+(1+\ln n)/n$. Thus,
\[
0\le\surd{n}(\wAUK_n-\hAUK_n)\le(1+\ln n)/\surd{n};
\]
and so, $\surd{n}(\hAUK_n-\wAUK_n)=o_p(1)$ as $n\to\infty$ completing the proof of \eqref{eq.covergence}.

Suppose $d=2$. It remains to compute the exact variance $\sigma_{\vP}^2$ to this case. First we present some preliminary results. Let $k,n$ be nonnegative integers and $t\in[0,1]$. Define the integral
\[
I_{k,n}(t)\doteq\int_{0}^{t}s^k{\ln^n(1/s)\ud{s}}.
\]
Then, one can easily verify that
\[
I_{k,n}(t)=t^{k+1}\sum_{j=0}^{n}\frac{(n)_j}{(k+1)^j}\ln^{n-j}(1/t),
\quad\textrm{with}\quad
I_{k,n}\equiv I_{k,n}(1)=\frac{n!}{(k+1)^n},
\]
where $(x)_j=x\cdots(x-j+1)$ with $(x)_0=1$ denotes the $j$th order descending factorial function at $x$. Finally, we compute
\[
\sigma_{\vP}^2=2\int_0^1\left\{t\ln(1/t)-\ln(1/t)+\ln^2(1/t)\right\}{\left\{\int_{0}^{t}s\ln(1/s)\ud{s}\right\}\ud{t}},
\]
that is
\begin{align}
\sigma_{\vP}^2&=2\int_0^1\left\{t\ln(1/t)-\ln(1/t)+\ln^2(1/t)\right\}I_{1,1}(t)\ud{t}
               =\frac{I_{3,1}}{2}+I_{3,2}-\frac{I_{2,1}}{2}-\frac{I_{2,2}}{2}+I_{2,3}=\frac{19}{432}.\qedhere
\end{align}
\end{proof}

\section*{Supplementary Material}

\begin{description}

\item[\emph{Title:}]
Supplementary Material: An AUK-based index for measuring and testing the joint dependence of a random vector

\item[I. F-G-M copulas:]
Description of the Farlie–Gumbel–Morgenstern copulas that used in Subsubsection \ref{sssec.sim.non-normal}.

\item[II. Bivariate distributions that are used in Subsection \ref{ssec.sim.test}:]
Description of the bivariate distributions that are used in Subsection \ref{ssec.sim.test}

\item[III. Numerical results:]
The true values of the index $I$ in the normal distribution case $\CN_3(\bzero,\Sig_3(\rho))$ for various values of $\rho$.

\item[IV. A data set of biomarkers of hepatic injury:]
The data set of biomarkers of hepatic injury that used in Section \ref{sec.RealData}.

\item[V. \emph{R} Codes:]
\emph{R} codes to implement the developed method in this article.
\end{description}

\section*{Acknowledgements}

The second author acknowledges financial support in the form of a grant award (award number 82114), from the Troup Fund, KALEIDA Health Foundation. The authors thank Dr.\ Andrew H.\ Talal for providing the data set and for helpful discussions aided the interpretation of the results.

\newpage
\phantom{.}
\thispagestyle{empty}
%
\newpage

\renewcommand{\thesection}{\Roman{section}}
\fancypagestyle{firstpage}{%
  \lhead{} \chead{} \rhead{}
  \lfoot{} \cfoot{\thepage} \rfoot{}
}
\thispagestyle{firstpage}
\setcounter{page}{1}
\setcounter{section}{0}
\title{\Large\bf Supplementary Material: \sc An AUK-based index for measuring and testing the joint dependence of a random vector}
\maketitle{}
\renewcommand{\runtitle}{Supplementary Material: An AUK-based index for measuring  the joint dependence of a random vector}

\section{F-G-M copulas}
\label{sm:FGM}
We consider next two special cases of trivariate $\FGM$ copulas. For $\bu=(u_1,u_2,u_3)^T\in[0,1]^3$, consider the copulas
\begin{subequations}\label{eq.F-G-M}
\begin{equation}\label{eq.F-G-M(a)}
C_{\theta}(\bu)=u_1u_2u_3\{1+\theta(1-u_1)(1-u_2)\};
\end{equation}
\begin{equation}\label{eq.F-G-M(b)}
\wC_{\theta}(\bu)=u_1u_2u_3\{1+\theta(1-u_1)(1-u_2)(1-u_3)\},
\end{equation}
\end{subequations}
where $|\theta|\le1$ (see \citealp{JK1975}; \citealp{GLNQR2001}, Remark~4, p.~9). Obviously, both $C_{\theta=0}$ and $\wC_{\theta=0}$ correspond to the independent copula case $u_1u_2u_3$.

Suppose $\bU=(U_1,U_2,U_3)^T\sim C_{\theta}$, see \eqref{eq.F-G-M(a)}. Obviously, $(U_1,U_2)^T$ and $U_3$ are independent; also, one can easily verify that the cdf of $U_1|U_2=u_2$ is $F_{U_1|U_2}(u_1|u_2)=\alpha u_1^2+(1-\alpha)u_1$, $0\le u_1\le1$ (for each $0\le u_2\le1$), where $\alpha\equiv\alpha(u_2;\theta)=\theta(2u_2-1)$. Thus,
\[
F^{-1}_{U_1|U_2}(u_1|u_2)
=\left\{
  \begin{array}{c@{\quad\textrm{if} \ \ }l}
    u_1, & \alpha=0, \\
    u_{1,1}\one{0\le u_{1,1}\le1}+u_{1,2}\one{0\le u_{1,2}\le1}, & \alpha\ne0, \\
  \end{array}
 \right.
\]
where $u_{1,1}=[\alpha-1-\{(1-\alpha)^2+4\alpha u_1\}^{1/2}]/(2\alpha)$ and $u_{1,2}=[\alpha-1+\{(1-\alpha)^2+4\alpha u_1\}^{1/2}]/(2\alpha)$.

Let now $\w\bU=(\w{U}_1,\w{U}_2,\w{U}_3)^T\sim\wC_{\theta}$, see \eqref{eq.F-G-M(b)}. The cdf of $(\w{U}_2,\w{U}_3)^T$ is $F_{\w{U}_2,\w{U}_3}(u_2,u_3)=u_2u_3$, $(u_2,u_3)^T\in[0,1]^2$, that is $\w{U}_2$ and $\w{U}_3$ are independent. Moreover, the cdf of $\w{U}_1|(\w{U}_2,\w{U}_3)^T=(u_2,u_3)^T$ is $F_{\w{U}_1|\w{U}_2,\w{U}_3}(u_1|u_2,u_3)=\w{\alpha}u_1^2+(1-\w{\alpha})u_1$, $0\le u_1\le1$ (for each $0\le u_2,u_3\le1$), where $\w{\alpha}\equiv\w{\alpha}(u_2,u_3;\theta)=-\theta(2u_2-1)(2u_3-1)$. Thus,
\[
\w{F}^{-1}_{\w{U}_1|\w{U}_2,\w{U}_3}(u_1|u_2,u_3)
=\left\{
  \begin{array}{c@{\quad\textrm{if} \ \ }l}
    u_1, & \w{\alpha}=0, \\
    \w{u}_{1,1}\one{0\le \w{u}_{1,1}\le1}+\w{u}_{1,2}\one{0\le \w{u}_{1,2}\le1}, & \w{\alpha}\ne0, \\
  \end{array}
 \right.
\]
where $\w{u}_{1,1}=[\w{\alpha}-1-\{(1-\w{\alpha})^2+4\w{\alpha} u_1\}^{1/2}]/(2\w{\alpha})$ and $\w{u}_{1,2}=[\w{\alpha}-1+\{(1-\w{\alpha})^2+4\w{\alpha} u_1\}^{1/2}]/(2\w{\alpha})$.

From the preceding analysis, if $X_1$, $X_2$ and $X_3$ are independent $U(0,1)$ distributions, then $\left(F^{-1}_{U_1|U_2}(X_1|X_2),X_2,X_3\right)^T\sim C_{\theta}$ and $\left(\w{F}^{-1}_{\w{U}_1|\w{U}_2,\w{U}_3}(X_1|X_2,X_3),X_2,X_3\right)^T\sim \wC_{\theta}$.

\section{Bivariate distributions that are used in Subsection \ref{ssec.sim.test}}
\label{sm:BVdistr}

\begin{description}
\item[Bivariate uniform distribution on the circumference of a circle, $\bm{U\{C(0,1)\}}$]
Let $Z_1$, $Z_2$ be iid standard normal distributions. Consider the rvs $U_i=Z_i\big/\big(Z_1^2+Z_2^2\big)^{1/2}$, $i=1,2$. Then, $(U_1,U_2)$ is uniformly distributed on the circumference of the circle $x^2+y^2=1$ .

\item[Bivariate $\bm{\exp\{\lambda_1,\lambda_2,\lambda_{12}\}}$ distribution]
Let $\lambda_1, \lambda_2, \lambda_{12}>0$ with $\lambda_{12}<\min\{\lambda_1, \lambda_2\}$. Suppose that $E_1\sim \exp(\lambda_1-\lambda_{12})$, $E_2\sim \exp(\lambda_2-\lambda_{12})$ and $E_3\sim \exp(\lambda_{12})$, and consider the rvs $X_i=\min\{E_i,E_3\}-1/\lambda_i$, $i=1,2$. Then, $(X_1,X_2)$ follows the bivariate exponential distribution.

\item[Bivariate $\Morgenstern\bm{\{\alpha\}}$ distribution]
Let $\alpha>0$ and $X$, $U$ be independent $U(0,1)$ distributed. Consider the rvs $Z=\alpha(2X-1)-1$, $W=1-2\alpha(2X-1)+\alpha^2(2X-1)^2+4\alpha U(2X-1)$ and $Y=2U/(W^{1/2}-Z)$. Then, $(X,Y)$ follows the bivariate Morgenstern distribution.

\item[Bivariate $\Plackett\bm{\{s\}}$ distribution]
Let $s>1$ and $X$, $U$ be independent $U(0,1)$ distributed. Consider the rvs $W_1=U(1-U)$, $W_2=s+W_1(s-1)^2$, $W_3=2W_1(s^2X+1-X)+s(1-2W_1)$, $W_4=s\{s+4(1-s)^2X(1-X)W_1\}$ and $Y=W_2\{W_3-(1-2U)W_4^{1/2}\}/2$. Then, $(X,Y)$ follows the bivariate Plackett distribution.

\item[Bivariate $\ALIHAQ\bm{\{a,p\}}$ distribution]
Let $a>0$, $0<p<1$ and $X$, $U$ be independent $U(0,1)$ distributed. Consider the rvs $V=1-U$, $W_1=a(2VU+1)+2a^2V^2U+1$, $W_2=a^2(4V^2U-4VU+1)+a(4VU-4p+2)+1$ and $Y=2U(aV-1)^2/(W_1+W_2^{1/2})$. Then, $(X,Y)$ follows the bivariate ALI-HAQ's distribution.

\item[Bivariate $\Gumbel\bm{\{e\}}$ distribution]
Let $e>0$ and $U_1\ldots,U_4$ be independent $U(0,1)$ distributed. Consider the rvs $X=-\ln U_1$, $W_1=1+eX$, $W_2=(W_1-e)/W_1$, $W_3=-\ln U_2$ and $Y=\one{U_3<W_2}W_1W_3+\one{U_3\ge W_2}W_1(W_3-\ln U_4)$. Then, $(X,Y)$ follows the bivariate Gumbel distribution.

\item[Bivariate $\bm{t_5}$ distribution]
Using the package ``mvtnorm'' in R, we simulate data from bivariate student distribution with $5$ degree of freedom and variance-covariance matrix $\Sig=\big({1\atop1}~{1\atop4}\big)$.
\end{description}
\pagebreak

\section{Numerical results}
\label{sm:numerical}

This sections presents some useful numerical results. Table \ref{table.(r,I(r))} contains the true values of the index $I$ in the normal distribution case $\CN_3(\bzero,\Sig_3(\rho))$, where the variance/covariance matrix $\Sig_3(\rho)$ is defined by \eqref{eq.Sig(b)}, for various values of $\rho$. Additionally, Figure \ref{fig.N3Scatterplots0} presents scatterplots based on random samples from $\CN_3(\bzero,\Sig_3(\rho))$ distributions for various values of $\rho$, giving an illustration of the shapes of these  distributions.

\begin{table*}[htp]
 \centering{
 \caption[The index $I\equiv I(\rho)$ for various values of $\rho$ under the trivariate normal distribution.]{The index $I\equiv I(\rho)$ for various values of $\rho$ when the underline distribution is $\CN_3\left(\bzero,\Sig_3(\rho)\right)$.}
 \label{table.(r,I(r))}
 \scriptsize
 \begin{tabular*}{\textwidth}
 {@{\hspace{0ex}}l@{\extracolsep{\fill}}l@{\hspace{0ex}}l@{\hspace{0ex}}l@{\hspace{0ex}}l@{\hspace{0ex}}l@{\hspace{0ex}}l@{\hspace{0ex}}l@{\hspace{0ex}}l@{\hspace{0ex}}l@{\hspace{0ex}}l@{\hspace{0ex}}l@{\hspace{0ex}}l@{\hspace{0ex}}r@{\hspace{0ex}}}
 \addlinespace
 \toprule
 $\rho\colon$ & 0 & 0.05 & 0.10 & 0.15 & 0.20 & 0.25 & 0.30 & 0.35 & 0.40 & 0.45 & 0.50 & 0.55  &   \\
 $I\colon$    & 0 & 0.027 & 0.052 & 0.077 & 0.101 & 0.124 & 0.148 & 0.171 & 0.194 & 0.218 & 0.243 & 0.267 \\
 \midrule
 $\rho\colon$ & 0.60 & 0.65 & 0.70 & 0.75 & 0.80 & 0.85 & 0.90 & 0.95 & 0.97 & 0.98 & 0.99 & 0.995 & 1 \\
 $I\colon$    & 0.294 & 0.321 & 0.352 & 0.385 & 0.423 & 0.467 & 0.524 & 0.609 & 0.662 & 0.701 & 0.758 & 0.841 & 1 \\
 \bottomrule
 \end{tabular*}}
 \end{table*}

\begin{figure}[htp]
\begin{subfigure}[a]{.24\textwidth}
\centering
\FIG
{\resizebox{\linewidth}{!}{
\begin{tikzpicture}
\begin{axis}[view={30}{20}]
\addplot3+[draw=none, mark=o,color=blue,mark size=1] table {normal0.dat};
\end{axis}
\end{tikzpicture}
}}
{\includegraphics[width=\textwidth]{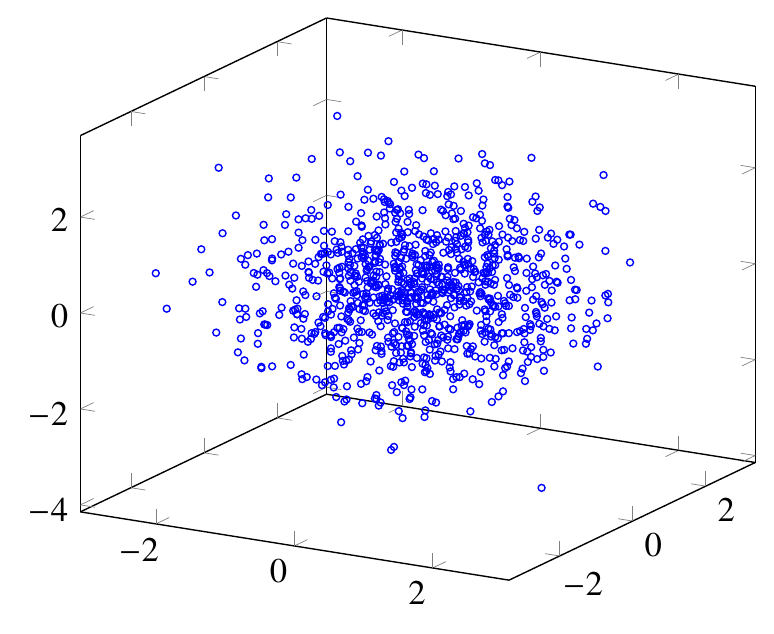}}
\caption{$\rho=0$.}
\label{sfig.normal0}
\end{subfigure}
\hfill
\begin{subfigure}[a]{.24\textwidth}
\centering
\FIG
{\resizebox{\linewidth}{!}{
\begin{tikzpicture}
\begin{axis}[view={30}{20}]
\addplot3+[draw=none, mark=o,color=blue,mark size=1] table {normal25.dat};
\end{axis}
\end{tikzpicture}
}}
{\includegraphics[width=\textwidth]{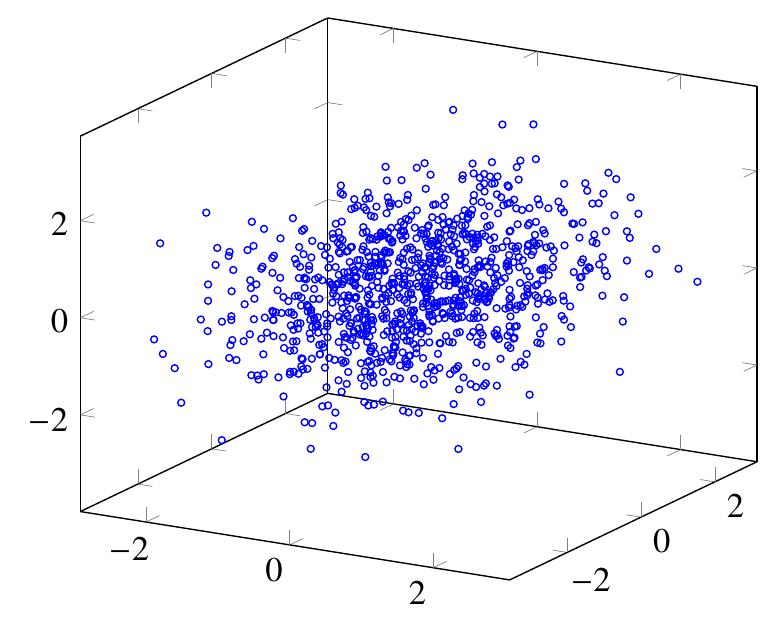}}
\caption{$\rho=0.25$.}
\label{sfig.normal0.25}
\end{subfigure}
\hfill
\begin{subfigure}[a]{.24\textwidth}
\FIG
{\resizebox{\linewidth}{!}{
\begin{tikzpicture}
\begin{axis}[view={30}{20}]
\addplot3+[draw=none, mark=o,color=blue,mark size=1] table {normal4.dat};
\end{axis}
\end{tikzpicture}
}}
{\includegraphics[width=\textwidth]{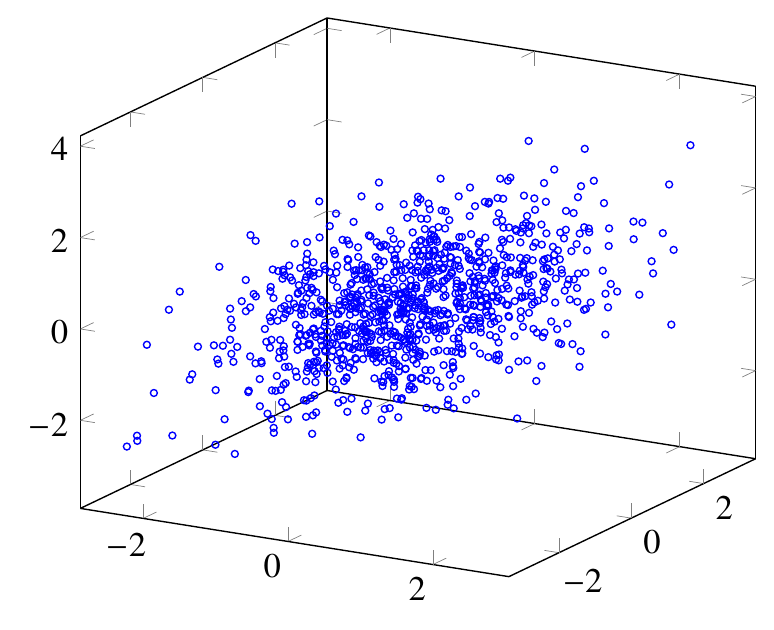}}
\caption{$\rho=0.4$.}
\label{sfig.normal0.4}
\end{subfigure}
\hfill
\begin{subfigure}[a]{.24\textwidth}
\centering
\FIG
{\resizebox{\linewidth}{!}{
\begin{tikzpicture}
\begin{axis}[view={30}{20}]
\addplot3+[draw=none, mark=o,color=blue,mark size=1] table {normal5.dat};
\end{axis}
\end{tikzpicture}
}}
{\includegraphics[width=\textwidth]{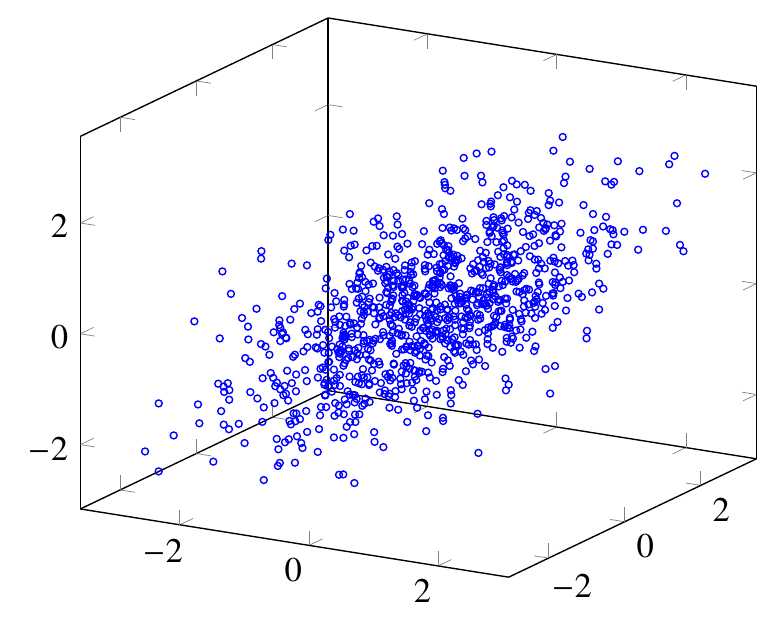}}
\caption{$\rho=0.5$.}
\label{sfig.normal0.5}
\end{subfigure}
\medskip\linebreak
\begin{subfigure}[a]{.24\textwidth}
\centering
\FIG
{\resizebox{\linewidth}{!}{
\begin{tikzpicture}
\begin{axis}[view={30}{20}]
\addplot3+[draw=none, mark=o,color=blue,mark size=1] table {normal6.dat};
\end{axis}
\end{tikzpicture}
}}
{\includegraphics[width=\textwidth]{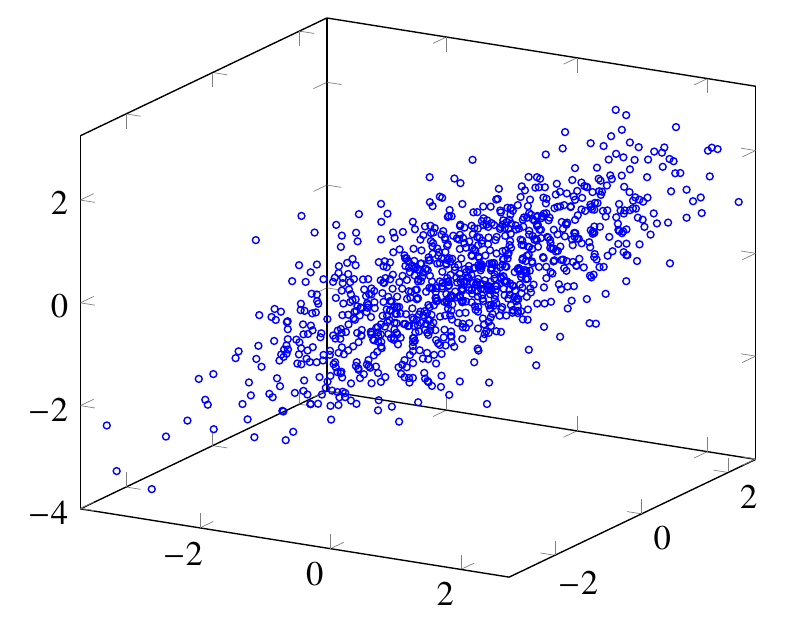}}
\caption{$\rho=0.6$.}
\label{sfig.normal0.6}
\end{subfigure}
\hfill
\begin{subfigure}[a]{.24\textwidth}
\centering
\FIG
{\resizebox{\linewidth}{!}{
\begin{tikzpicture}
\begin{axis}[view={30}{20}]
\addplot3+[draw=none, mark=o,color=blue,mark size=1] table {normal75.dat};
\end{axis}
\end{tikzpicture}
}}
{\includegraphics[width=\textwidth]{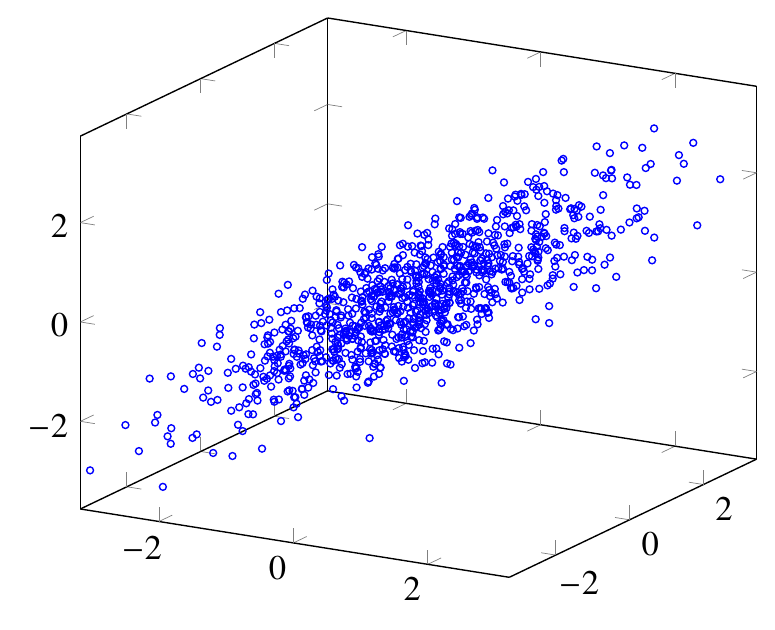}}
\caption{$\rho=0.75$.}
\label{sfig.normal75}
\end{subfigure}
\hfill
\begin{subfigure}[a]{.24\textwidth}
\centering
\FIG
{\resizebox{\linewidth}{!}{
\begin{tikzpicture}
\begin{axis}[view={30}{20}]
\addplot3+[draw=none, mark=o,color=blue,mark size=1] table {normal8.dat};
\end{axis}
\end{tikzpicture}
}}
{\includegraphics[width=\textwidth]{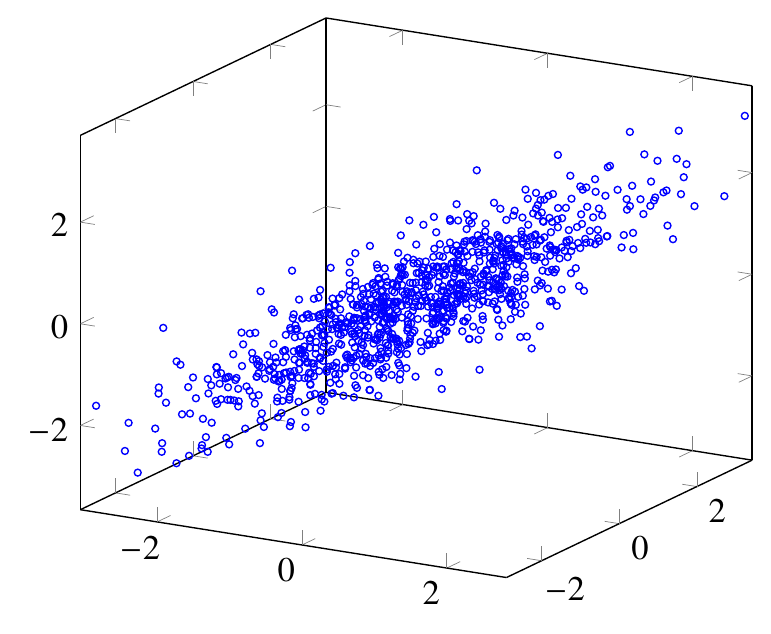}}
\caption{$\rho=0.8$.}
\label{sfig.normal0.8}
\end{subfigure}
\hfill
\begin{subfigure}[a]{.24\textwidth}
\centering
\FIG
{\resizebox{\linewidth}{!}{
\begin{tikzpicture}
\begin{axis}[view={30}{20}]
\addplot3+[draw=none, mark=o,color=blue,mark size=1] table {normal10.dat};
\end{axis}
\end{tikzpicture}
}}
{\includegraphics[width=\textwidth]{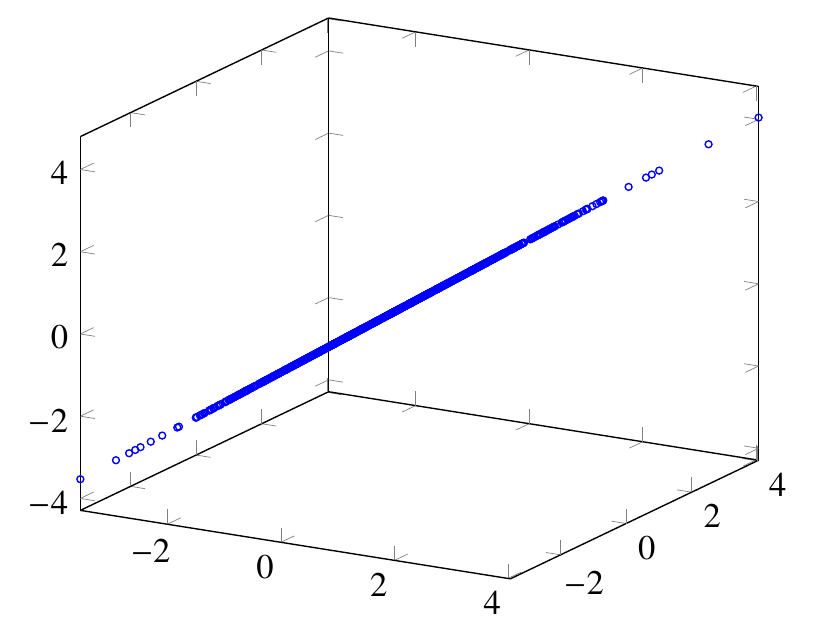}}
\caption{$\rho=1$.}
\label{sfig.normal1}
\end{subfigure}
\caption[Scatterplots: Random samples of size $n=1000$ are drawn from the trivariate normal distributions]{Scatterplots: Random samples of size $n=1000$ are drawn from the trivariate normal distributions $\CN_3(\bzero,\Sig_3(\rho))$ for various values of $\rho$.}
\label{fig.N3Scatterplots0}
\end{figure}
\newpage

\section{A data set of biomarkers of hepatic injury}
\label{sm:data(B,AST,ALT,AP)}
 \begin{table}[htp]
 \caption{A data set of size $n=208$ of the biomarkers Bilirubin (B), Alanine and Aspartate aminotransferase (ALT and AST) and Alkaline phosphatase (AP).}
 \label{table.DataSet}
 \scriptsize
 \begin{tabular*}{\textwidth}
 {@{}@{\extracolsep{\fill}}r@{}S@{}r@{~~}r@{~~}r@{}c@{\quad}r@{}S@{}r@{~~}r@{~~}r@{}c@{\quad}r@{}S@{}r@{~~}r@{~~}r@{}c@{\quad}r@{}S@{}r@{~~}r@{~~}r@{}}
 \toprule
 PID\!\! & B & AST & ALT & AP~\! & & PID\!\! & B & AST & ALT & AP~\! & & PID & B & AST & ALT & AP~\! & & PID & B & AST & ALT & AP~\! \\
 \cmidrule{1-5} \cmidrule{7-11} \cmidrule{13-17} \cmidrule{19-23}
  1 &  1.1 &  34 &  54 & 150 &  &  53 &  0.8 &  29 &  21 & 122 &  & 105 &  0.8 &  27 &  20 &  63 &  & 157 &  0.4 &  15 &  16 &  96 \\ [-.55ex]
  2 &  0.8 &  32 &  23 &  90 &  &  54 &  0.3 &  26 &  25 & 119 &  & 106 &  0.4 &  34 &  59 & 143 &  & 158 &  1.1 &  96 &  75 &  91 \\ [-.55ex]
  3 &  0.4 &  17 &  21 & 105 &  &  55 &  0.8 &  59 &  34 &  86 &  & 107 &  4.4 &  76 &  42 & 126 &  & 159 &  1.2 &  38 &  28 & 116 \\ [-.55ex]
  4 &  0.2 &  10 &  17 & 178 &  &  56 &  0.7 & 171 & 217 & 107 &  & 108 &  3.2 &  35 &  24 & 118 &  & 160 &  0.5 &  15 &  18 &  65 \\ 
  5 &  0.5 &  26 &  38 &  57 &  &  57 &  0.5 &  15 &  11 &  81 &  & 109 &  0.4 &  41 &  34 &  47 &  & 161 &  0.2 &  22 &  19 & 100 \\ [-.55ex]
  6 &  1.6 & 144 & 116 & 158 &  &  58 &  0.2 &  13 &  13 & 109 &  & 110 &  1.2 &  57 &  50 & 115 &  & 162 &  0.3 &  53 &  47 &  83 \\ [-.55ex]
  7 &  1.1 &  51 &  28 & 145 &  &  59 &  0.4 &  51 &  64 &  71 &  & 111 &  1.6 &  22 &  20 &  63 &  & 163 &  1.6 &  99 &  56 &  81 \\ [-.55ex]
  8 &  0.5 &  26 &  15 & 207 &  &  60 &  0.5 &  15 &  10 &  88 &  & 112 &  2   &  21 &  18 &  63 &  & 164 &  1.6 &  37 &  63 & 231 \\ 
  9 &  1.3 &  48 &  31 & 295 &  &  61 &  0.3 &  15 &  15 & 121 &  & 113 &  0.5 &  26 &  38 &  71 &  & 165 &  0.4 &  40 &  13 & 273 \\ [-.55ex]
 10 &  0.9 &  25 &  30 &  76 &  &  62 &  1.2 &  22 &  17 &  61 &  & 114 &  0.2 &  14 &  15 &  81 &  & 166 &  1   &  57 &  85 & 189 \\ [-.55ex]
 11 &  0.7 &  57 &  52 &  88 &  &  63 &  0.8 &  16 &  18 &  94 &  & 115 &  1.3 &  35 &  32 &  61 &  & 167 &  6.4 & 108 &  68 & 108 \\ [-.55ex]
 12 &  0.2 &  34 &  35 &  76 &  &  64 &  0.2 &  31 &  30 &  48 &  & 116 &  0.4 &  15 &  14 &  72 &  & 168 &  1.1 &  54 &  79 &  91 \\ 
 13 &  0.5 &  29 &  14 &  66 &  &  65 &  0.5 &  29 &  25 & 104 &  & 117 &  0.5 &  30 &  51 &  66 &  & 169 &  0.7 &  28 &  56 & 202 \\ [-.55ex]
 14 &  1.7 &  30 &  26 & 121 &  &  66 &  0.6 &  24 &  18 &  90 &  & 118 &  0.8 &  40 &  35 &  82 &  & 170 &  0.7 &  38 &  51 &  60 \\ [-.55ex]
 15 &  1.7 &  21 &  13 &  94 &  &  67 &  0.3 & 153 & 183 & 246 &  & 119 &  0.8 &  46 &  94 & 170 &  & 171 &  0.4 &  15 &  11 & 106 \\ [-.55ex]
 16 &  1.2 &  26 &  16 & 207 &  &  68 &  0.8 &  49 & 121 &  75 &  & 120 &  1   &  57 &  85 & 189 &  & 172 &  0.4 &  36 &  45 & 159 \\ 
 17 &  0.9 &  34 &  38 &  59 &  &  69 & 33.3 & 121 & 117 & 434 &  & 121 &  1.3 & 103 &  21 &  71 &  & 173 &  0.6 &  61 &  47 & 104 \\ [-.55ex]
 18 &  0.8 &  42 &  89 &  72 &  &  70 &  1   &  34 &  47 &  68 &  & 122 &  1.5 &  18 &  22 &  57 &  & 174 &  0.9 &  18 &  26 &  96 \\ [-.55ex]
 19 &  0.5 &  50 &  73 &  68 &  &  71 &  0.2 &  18 &  24 &  48 &  & 123 &  0.6 &  46 &  61 & 140 &  & 175 &  0.7 &  49 &  81 & 187 \\ [-.55ex]
 20 &  0.4 &  43 &  35 &  87 &  &  72 &  0.7 &  24 &  14 &  97 &  & 124 &  0.7 &  17 &  19 &  53 &  & 176 &  0.4 &  22 &  16 &  58 \\ 
 21 &  0.8 &  19 &  26 & 112 &  &  73 &  4.5 & 236 &  64 & 242 &  & 125 &  0.8 &  29 &  30 &  81 &  & 177 &  2.9 &  51 &  33 & 143 \\ [-.55ex]
 22 &  0.7 &  16 &  11 &  36 &  &  74 &  0.6 &  74 & 188 &  72 &  & 126 &  0.7 &  80 &  93 &  61 &  & 178 &  1.4 &  24 & 198 & 108 \\ [-.55ex]
 23 &  0.4 &  20 &  22 &  99 &  &  75 &  0.2 &  20 &  23 &  80 &  & 127 &  0.7 &  32 &  22 &  80 &  & 179 &  0.4 &  29 &  41 & 108 \\ [-.55ex]
 24 &  0.5 &  40 &  58 &  71 &  &  76 &  0.5 &  18 &  18 &  58 &  & 128 &  0.8 & 105 & 170 & 108 &  & 180 &  0.4 &  26 &  18 & 121 \\ 
 25 &  0.5 &  44 &  59 &  79 &  &  77 &  0.7 &  85 &  39 &  89 &  & 129 &  0.6 &  44 &  50 & 197 &  & 181 &  0.5 &  24 &  18 & 147 \\ [-.55ex]
 26 &  1.2 &  30 &  22 &  49 &  &  78 &  1.4 &  64 &  44 & 183 &  & 130 &  0.3 &  64 &  24 &  74 &  & 182 &  0.5 &  81 & 127 &  68 \\ [-.55ex]
 27 &  0.2 &  20 &  17 & 118 &  &  79 &  1.6 &  18 &  15 &  73 &  & 131 &  0.6 &  47 &  45 &  85 &  & 183 &  0.6 &  26 &  24 &  67 \\ [-.55ex]
 28 &  0.7 &  36 &  27 & 129 &  &  80 &  0.2 &  27 &  46 & 175 &  & 132 &  0.4 & 113 &  77 &  96 &  & 184 &  2.1 &  55 &  83 &  93 \\ 
 29 &  1.3 &  58 &  27 &  75 &  &  81 &  1.4 & 101 &  61 &  44 &  & 133 &  0.9 &  21 &  16 &  55 &  & 185 &  0.7 &  26 &  40 & 103 \\ [-.55ex]
 30 & 35.2 & 182 &  82 & 147 &  &  82 &  0.5 &  89 &  91 & 125 &  & 134 &  1.1 &  39 &  26 &  82 &  & 186 &  0.7 &  23 &  14 & 148 \\ [-.55ex]
 31 &  0.3 &  25 &  28 &  80 &  &  83 &  1.2 &  45 &  74 &  91 &  & 135 &  0.7 &  57 &  69 &  83 &  & 187 &  1.3 &  21 &  32 &  72 \\ [-.55ex]
 32 &  0.9 &  20 &  22 &  89 &  &  84 &  1.6 & 514 & 377 & 876 &  & 136 &  0.9 &  35 &  56 & 185 &  & 188 &  0.8 &  34 &  29 &  76 \\ 
 33 &  1.2 &  55 &  61 &  52 &  &  85 &  0.3 &  49 &  51 & 104 &  & 137 &  0.7 &  15 &  30 & 196 &  & 189 &  0.3 &  56 &  88 &  71 \\ [-.55ex]
 34 &  1.7 &  34 &  14 &  75 &  &  86 &  0.9 &  43 &  24 &  70 &  & 138 &  0.3 &  44 &  49 &  80 &  & 190 &  0.4 &  23 &  18 &  99 \\ [-.55ex]
 35 &  1.1 &  35 &  39 & 110 &  &  87 &  0.6 &  61 &  80 &  59 &  & 139 &  0.6 & 108 & 133 &  78 &  & 191 &  0.7 &  39 &  52 & 130 \\ [-.55ex]
 36 &  0.4 &  21 &  27 &  73 &  &  88 &  0.7 &  30 &  25 &  68 &  & 140 &  0.6 &  25 &  19 &  47 &  & 192 &  0.6 &  51 &  75 & 111 \\ 
 37 &  0.4 &  24 &  30 &  77 &  &  89 &  0.8 &  20 &  20 &  71 &  & 141 &  1.2 &  30 &  23 &  52 &  & 193 &  0.6 &  21 &  31 &  63 \\ [-.55ex]
 38 &  0.5 &  38 &  37 &  67 &  &  90 &  0.4 &  26 &  34 &  57 &  & 142 &  0.6 &  37 &  41 &  84 &  & 194 &  0.6 &  25 &  18 &  93 \\ [-.55ex]
 39 &  0.7 &  22 &  22 &  56 &  &  91 &  0.4 &  23 &  32 & 147 &  & 143 &  0.3 &  83 &  76 &  73 &  & 195 &  0.5 &  82 & 146 &  50 \\ [-.55ex]
 40 &  0.6 &  53 &  37 & 128 &  &  92 &  0.5 &  82 &  61 &  67 &  & 144 &  0.6 & 143 & 274 &  68 &  & 196 &  0.3 &  67 & 106 &  72 \\ 
 41 &  0.4 &  25 &  21 &  97 &  &  93 &  0.7 &  28 &  42 &  64 &  & 145 & 13.4 & 185 &  82 & 127 &  & 197 &  0.3 &  19 &  23 &  86 \\ [-.55ex]
 42 &  0.5 &  52 &  75 &  68 &  &  94 &  2.2 &  71 &  46 & 181 &  & 146 &  0.4 &  85 & 183 &  68 &  & 198 &  0.3 &  24 &  47 &  79 \\ [-.55ex]
 43 &  0.6 &  28 &  33 & 142 &  &  95 &  0.8 &  26 &  29 &  95 &  & 147 &  0.6 & 299 & 190 &  63 &  & 199 &  0.6 &  46 &  37 &  71 \\ [-.55ex]
 44 &  1   &  33 &  25 &  59 &  &  96 &  0.5 &  32 &  39 &  38 &  & 148 &  0.5 &  35 &  46 &  78 &  & 200 &  0.5 &  37 &  49 &  94 \\ 
 45 &  0.4 &  28 &  39 & 164 &  &  97 &  2.2 &  29 &  33 &  64 &  & 149 &  0.3 &  14 &  22 & 105 &  & 201 &  0.4 &  23 &  44 &  69 \\ [-.55ex]
 46 &  0.7 &  39 &  44 &  84 &  &  98 &  1   &  40 &  33 &  83 &  & 150 &  0.4 &  48 &  46 &  69 &  & 202 &  0.5 &  19 &  20 & 139 \\ [-.55ex]
 47 &  0.8 &  28 &  41 &  68 &  &  99 &  1.5 &  52 &  48 &  53 &  & 151 &  0.7 &  27 &  38 &  99 &  & 203 &  0.9 & 208 & 346 &  73 \\ [-.55ex]
 48 &  1.2 &  64 &  26 & 172 &  & 100 &  0.9 &  85 &  67 &  49 &  & 152 &  0.9 &  37 &  52 &  50 &  & 204 &  0.6 &  57 &  64 &  92 \\ 
 49 &  8.4 &  45 &  47 &  98 &  & 101 &  0.7 &  30 &  34 & 113 &  & 153 &  0.4 &  23 &  25 &  70 &  & 205 &  0.5 &  35 &  60 &  35 \\ [-.55ex]
 50 &  3.7 &  74 & 102 & 245 &  & 102 &  1.5 &  16 &  16 &  66 &  & 154 &  0.4 &  29 &  47 &  99 &  & 206 & 12.7 & 156 & 134 & 318 \\ [-.55ex]
 51 &  0.8 &  32 &  35 &  57 &  & 103 &  0.4 &  51 &  78 & 115 &  & 155 &  3.2 &  54 &  26 & 110 &  & 207 &  0.7 &  98 & 139 &  81 \\ [-.55ex]
 52 &  1   & 293 & 227 & 147 &  & 104 &  0.4 & 125 & 141 & 140 &  & 156 &  0.7 &  49 &  88 &  91 &  & 208 &  0.5 &  15 &  12 & 102 \\
 \bottomrule
 \end{tabular*}
 \end{table}
\newpage

\section{\emph{R} codes}
\label{sm:R}

\begin{lstlisting}[title={\normalsize\bfseries\scshape The required packages that used.}, basicstyle=\small, language=R]
rm(list=ls())
library(MASS); require(MASS); sfLibrary(MASS)
library(energy); require(energy)
library(Emcdf); sfLibrary(Emcdf)
library(mvtnorm)
library(gtools)
library(copula)
library("snowfall");library("rlecuyer")
sfInit(parallel=TRUE, cpus=8, type="SOCK")
library(dHSIC)
library(MonoPoly)
\end{lstlisting}

\begin{lstlisting}[title={\normalsize\bfseries\scshape The specific formula of $\bm{1-K_{\vP}(t)}$ when $\bm{d=3}$.}, basicstyle=\small, language=R]
g <- function(t){ifelse(t==0,1,1-t+t*log(t)-0.5*t*(log(t))^2)}
g <- Vectorize(g)
\end{lstlisting}

\begin{lstlisting}[title={\normalsize\bfseries\scshape The pdf $\bm{f}$ of $\bm{\CN_3(\bzero,\Sig_3(\brho))}$ distribution.}, basicstyle=\small, language=R]
f <- function(x,y,z,r12,r13,r23){
S <- matrix(c(1,r12,r13,r12,1,r23,r13,r23,1),3,3)
density <- dmvnorm(c(x,y,z), mean=c(0,0,0), sigma=S)
return(density)}
f <- Vectorize(f)
\end{lstlisting}

\begin{lstlisting}[title={\normalsize\bfseries\scshape The cdf $\bm{F}$ of $\bm{\CN_3(\bzero,\Sig_3(\brho))}$ distribution.}, basicstyle=\small, language=R]
F <- function(x,y,z,r12,r13,r23){
S <- matrix(c(1,r12,r13,r12,1,r23,r13,r23,1),3,3)
Pr <- pmvnorm(lower=rep(-Inf, 3), upper=c(x,y,z),
              mean=c(0,0,0), corr=S)
return(Pr)}
F <- Vectorize(F)
\end{lstlisting}

\begin{lstlisting}[title={\normalsize\bfseries\scshape Code for computing AUK when $\bm{\bX\sim \CN_3(\bzero,\Sig_3(\brho))}$.}, basicstyle=\small, language=R]
AUK <- function(r12,r13,r23){
auk <- integrate(Vectorize(function(x) {
        integrate(Vectorize(function(y) {
         integrate(function(z)
          {g(F(x,y,z,r12,r13,r23))*f(x,y,z,r12,r13,r23)},
          -Inf, Inf)$value }), -Inf,Inf)$value }), -Inf,Inf)$value
return(auk)}
\end{lstlisting}

\begin{lstlisting}[title={\normalsize\bfseries\scshape Code for computing $\bm{I}$ when $\bm{\bX\sim \CN_3\left(\bzero,\Sig_3(\rho)\right)}$.}, basicstyle=\small, language=R]
I <- function(rho){
Sign <- permutations(n=2,r=3,v=c(1,-1), repeats.allowed=TRUE)
auk <- rep(0,8)
for (j in 1:8){
  r12 = Sign[j,1]*Sign[j,2]*rho
  r13 = Sign[j,1]*Sign[j,3]*rho
  r23 = Sign[j,2]*Sign[j,3]*rho
  auk[j] <- AUK(r12, r13, r23)}
ind=sqrt(32/57)*sqrt(sum((auk-1/2)^2))
return(ind)}
\end{lstlisting}

\begin{lstlisting}[title={\normalsize\bfseries\scshape The ecdf $\bm{\pmb{\bbF}_n}$ of a data set $\bm{\BX_n=(\bX_1,\ldots,\bX_n)^T}$.}, basicstyle=\small, language=R]
Fn <- function(Xn){
 Fn1 <- function(i){emcdf(Xn, Xn[i,])}
 Fn1 <- Vectorize(Fn1)
 FFn <- Fn1(c(1:nrow(Xn)))
return(FFn)}
\end{lstlisting}

\begin{lstlisting}[title={\normalsize\bfseries\scshape The $\bm{\hAUK_n}$ estimator based on the data set $\bm{\BX_n=(\bX_1,\ldots,\bX_n)^T}$.}, basicstyle=\small, language=R]
AUK <- function(Xn){mean(g(Fn(Xn)))}
\end{lstlisting}

\begin{lstlisting}[title={\normalsize\bfseries\scshape Code for computing $\bm{I}$, via Monte Carlo, when $\bm{\bX\sim \CN_3\left(\bzero,\Sig_3(\rho)\right)}$.}, basicstyle=\small, language=R]
I <- function(N,n,rho){
 mu <- c(0,0,0)
 Sigma <- rho*matrix(rep(1,9),3,3)+(1-rho)*diag(3)
 I1 <- function(i){
  if (i==1){
  X <- mvrnorm(n, mu, Sigma)
  Sign <- permutations(n=2, r=3, v=c(1,-1), repeats.allowed=TRUE)
  auk1 <- function(j){AUK(X%*%diag(Sign[j,]))}
  auk1 <- Vectorize(auk1)
  auk <- auk1(1:8)
  A=sqrt(32/57)*sqrt(sum((auk-1/2)^2))
  return(A)}}
 I1 <- Vectorize(I1)
ind <- mean(I1(rep(1,N)))
return(ind)}
I <- Vectorize(I)
\end{lstlisting}

\begin{lstlisting}[title={\normalsize\bfseries\scshape The function $\bm{\varphi_3}$.}, basicstyle=\small, language=R]
phi <- function(t){1.61*t+4.513*t^2-13.607*t^3+12.235*t^4-3.751*t^5}
phi <- Vectorize(phi)
\end{lstlisting}

\begin{lstlisting}[title={\normalsize\bfseries\scshape The $\bm{\hI_n}$ and $\bm{\hI^*_n}$ estimators based on the data set $\bm{\BX_n=(\bX_1,\ldots,\bX_n)^T}$.}, basicstyle=\small, language=R]
Indices <- function(X){
 Sign <- permutations(n=2, r=3, v=c(1,-1), repeats.allowed=TRUE)
 auk1 <- function(j){AUK(X%*%diag(Sign[j,]))}
 auk1<- Vectorize(auk1)
 auk <- auk1(1:8)
 ind=sqrt(32/57)*sqrt(sum((auk-1/2)^2))
 ind.st <- phi(ind)
return(c(ind,ind.st))}
\end{lstlisting}

\begin{lstlisting}[title={\normalsize\bfseries\scshape Estimating the function $\bm{\varphi_d}$ using {\tt MonoPoly} R package.}, basicstyle=\small, language=R]
#let varrho and I be as in Step 1 of the algorithm
C <- SOSpol.fit(I, rho, deg.is.odd=TRUE, K=3, a=0, b=1,
                monotone = c("increasing", "decreasing"),
                trace = FALSE, plot.it = FALSE, type=2,
                control = monpol.control())$beta.raw
C <- C-C[1]
C <- C/sum(C)
phi.d <- function(t){evalPol(t,C)}
phi.d <- Vectorize(phi.d)
}
\end{lstlisting}

\begin{lstlisting}[title={\normalsize\bfseries\scshape Estimating the standard deviation $\bm{\sigma_{\vP}}$.}, basicstyle=\small, language=R]
Wrapper <- function(d){
  l <- length(d)
  Fn<- function(Un){
    Fn1<- function(i){
      emcdf(Un, Un[i,])
    }
    Fn1<- Vectorize(Fn1)
    FFn<- Fn1(c(1:nrow(Un)))
    return(FFn)
  }
  sigma_Pi<- function(r,n,d){
    AUK1<- function(i){
      if(i==1){
        X<- matrix(runif(2*n, min=0, max=1), n, 2)
        auk<- mean(pchisq(-2*log(Fn(X)), df=2*d, ncp=0, lower.tail=TRUE))
        return(auk)
      }
    }
    AUK1<- Vectorize(AUK1)
    AUK<- AUK1(rep(1,r))
    Zn<- sqrt(n)*(AUK-1/2)
    sigmaPi<- sqrt(var(Zn))
    sigma_Pi<- Vectorize(sigma_Pi)
    return(sigmaPi)
  }
  sPi <- sigma_Pi(1000, 50000, d)
}
#
start_time <- Sys.time()
Output<-sfLapply(Value.list, Wrapper)
R <- matrix(unlist(Output), nrow=length(d), ncol=1)
rownames(R) <-d
colnames(R) <-c("sigma_Pi")
end_time <- Sys.time()
running_time<-end_time - start_time
running_time
sfStop()
\end{lstlisting}

\begin{lstlisting}[title={\normalsize\bfseries\scshape A code to measure the speed-time of AUK and {\bf{d}}HSIC tests}, basicstyle=\small, language=R]
speed.times <- function(n){
x<- runif(n)
y<- runif(n)
X <- cbind(x,y)
StartTime.AUK <- Sys.time()
AUK <- mean(g(Fn(X)))
Z <- sqrt(n)*abs(AUK-1/2)/(sqrt(19/432))
EndTime.AUK <- Sys.time()
TimeTaken.AUK <- EndTime.AUK - StartTime.AUK
StartTime.dHSIC <- Sys.time()
dHSIC <- dhsic.test(list(x,y), alpha = 0.05, method = "permutation",
                    kernel = "gaussian", B = 1000, pairwise = FALSE,
                    bandwidth = 1, matrix.input = FALSE)
EndTime.dHSIC <- Sys.time()
TimeTaken.dHSIC <- EndTime.dHSIC - StartTime.dHSIC
Res=c(n,c(TimeTaken.AUK,TimeTaken.dHSIC))
R <- matrix(Res, nrow=1, ncol=3)
colnames(R) <- c("sample size n","AUK-time (in sec)","dHSIC-time (in sec)")
rownames(R) <- c("")
return(t(R))}
\end{lstlisting}
}
\end{appendices}


\begin{thebibliography}{9}

\bibitem[Barbe et al.(1996)Barbe, Genest, Ghoudi and R\'emillard]{BGGR1996}
 \textsc{Barbe, P., Genest, C., Ghoudi, K.}\ and \textsc{R\'emillard, B.}\ (1996).
 On Kendall's process.
 \textit{Journal of Multivariate Analysis}, \textbf{58}, 197--229.

\bibitem[Bergsma and Dassios(2014)]{BD2014}
 \textsc{Bergsma, W.}\ and \textsc{Dassios, A.}\ (2014).
 A consistent test of independence based on a sign covariance related to Kendall's tau.
 \textit{Bernoulli}, \textbf{20}(2), 1006--1028.

\bibitem[Billingsley(2013)]{Billingsley2013}
 \textsc{Billingsley, P.}\ (2013).
 \textit{Convergence of probability measures}.
 John Wiley \& Sons.

\bibitem[Brechmann and Schepsmeier(2013)]{BS2013}
 \textsc{Brechmann, E.}\ and \textsc{Schepsmeier, U.}\ (2013).
 Cdvine: Modeling dependence with c-and d-vine copulas in R.
 \textit{Journal of Statistical Software}, \textbf{52}, 1--27.

\bibitem[Durante et al.(2012)Durante, Fern\'andez-S\'anchez and Sempi]{DF-SS2012}
 \textsc{Durante, F., Fern\'andez-S\'anchez, J.}\ and \textsc{Sempi, C.}\ (2012).
 Sklar's theorem obtained via regularization techniques.
 \textit{Nonlinear Analysis: Theory, Methods \& Applications}, \textbf{75}, 769--774.

\bibitem[Faugeras(2013)]{Faugeras2013}
 \textsc{Faugeras, O.P.}\ (2013).
 Sklar's theorem derived using probabilistic continuation and two consistency results.
 \textit{Journal of Multivariate Analysis}, \textbf{122}, 271--277.

\bibitem[Genest et al.(2019)Genest, Ne\v{s}lehov\'a, R\'emillard and Murphy]{GNRM2019}
 \textsc{Genest, C., Ne\v{s}lehov\'a, J.G., R\'emillard, B.}\ and \textsc{Murphy, O.A.}\ (2019).
 Testing for independence in arbitrary distributions.
 \textit{Biometrika}, \textbf{106}(1), 47--68.

\bibitem[Genest and Rivest(1993)]{GR1993}
 \textsc{Genest, C.}\ and \textsc{Rivest, L.-P.}\ (1993).
 Statistical inference procedures for bivariate Archimedean copulas.
 \textit{Journal of the American Statistical Association}, \textbf{88}, 1034--1043.

\bibitem[Georges et al.(2001)Georges, Lamy, Nicolas, Quibel and Roncalli]{GLNQR2001}
 \textsc{Georges, P., Lamy, A.-G., Nicolas, E., Quibel, G.}\ and \textsc{Roncalli, T.}\ (2001).
 Multivariate survival modelling: a unified approach with copulas.
 \textit{Available at SSRN 1032559}.

\bibitem[Hofert et al.(2019)Hofert, Oldford, Prasad and Zhu]{HOPZ2019}
 \textsc{Hofert, M., Oldford, W., Prasad, A.}\ and \textsc{Zhu, M.}\ (2019).
 A framework for measuring association of random vectors via collapsed random variables.
 \textit{Journal of Multivariate Analysis}, \textbf{172}, 5--27.

\bibitem[Joe(1997)]{Joe1997}
 \textsc{Joe, H.}\ (1997).
 \textit{Multivariate models and multivariate dependence concepts}. Chapman and Hall/CRC.

\bibitem[Johnson and Kotz(1975)]{JK1975}
 \textsc{Johnson, N.L.}\ and \textsc{Kotz, S.}\ (1975).
 On some generalized farlie-gumbel-morgenstern distributions.
 \textit{Communications in Statistics-Theory and Methods}, \textbf{4}, 415--427.

\bibitem[Kankainen(1995)]{Kankainen1995}
 \textsc{Kankainen, A.}\ (1995).
 \textit{Consistent testing of total independence based on the empirical characteristic function}.

\bibitem[Karvanen(2005)]{Karvanen2005}
 \textsc{Karvanen, J.}\ (2005).
 A resampling test for the total independence of stationary time series: Application to the performance evaluation of ica algorithms.
 \textit{Neural Processing Letters}, \textbf{22}(3), 311--324.

\bibitem[Kendall(1938)]{Kendall1938}
 \textsc{Kendall, M.G.}\ (1938).
 A new measure of rank correlation.
 \textit{Biometrika}, \textbf{30}, 81--93.

\bibitem[Kiefer(1961)]{Kiefer1961}
 \textsc{Kiefer, J.}\ (1961).
 On large deviations of the empiric df of vector chance variables and a law of the iterated logarithm.
 \textit{Pacific Journal of Mathematics}, \textbf{11}, 649--660.

\bibitem[M\'ori and Sz\'ekely(2019)]{MSz2019}
 \textsc{M\'ori, T.F.}\ and \textsc{Sz\'ekely, G.J.}\ (2019).
 Four simple axioms of dependence measures.
 \textit{Metrika}, \textbf{82}, 1--16.

\bibitem[Pearson(1895)]{Pearson1895}
 \textsc{Pearson, K.}\ (1895).
 Note on regression and inheritance in the case of two parents.
 \textit{Proceedings of the Royal Society of London}, \textbf{58}, 240--242.

\bibitem[Pfister et al.(2018)Pfister, B\"uhlmann, Sch\"olkopf, and Peters]{PBSP2018}
 \textsc{Pfister, N., B\"uhlmann, P., Sch\"olkopf, B.}\ and \textsc{Peters, J.}\ (2018).
 Kernel-based tests for joint independence.
 \textit{Journal of the Royal Statistical Society: Series B (Statistical Methodology)}, \textbf{80}(1), 5--31.

\bibitem[Pfister and Peters(2019)]{PP2019}
 \textsc{Pfister, N.}\ and \textsc{Peters, J.}\ (2019).
 Package `dHSIC'.
 R package version 2.1.
 \href{https://CRAN.R-project.org/package=dHSIC}{https://CRAN.R-project.org/package=dHSIC}

\bibitem[R\'enyi(1959)]{Renyi1959}
 \textsc{R\'enyi, A.}\ (1959).
 On measures of dependence.
 \textit{Acta Mathematica Hungarica}, \textbf{10}, 441--451.

\bibitem[R\"uschendorf(2017)]{Ruschendorf2017}
 \textsc{R\"uschendorf, L.}\ (2017).
 Improved Hoeffding–Fr\'echet bounds and applications to VaR estimates.
 In \textit{Copulas and Dependence Models with Applications} (pp. 181-202). Cham: Springer International Publishing.

\bibitem[Salvadori et al.(2013)Salvadori, Durante and Perrone]{SDP2013}
 \textsc{Salvadori, G., Durante, F.}\ and \textsc{Perrone, E.}\ (2013).
 Semi-parametric approximation of Kendall's distribution function and multivariate return periods.
 \textit{Journal de la Soci\'et\'e Fran\c{c}aise de Statistique}, \textbf{154}, 151--173.

\bibitem[Schweizer and Wolff(1981)]{SW1981}
 \textsc{Schweizer, B.}\ and \textsc{Wolff, E.F.}\ (1981).
 On nonparametric measures of dependence for random variables.
 \textit{The Annals of Statistics}, \textbf{9}, 879--885.

\bibitem[Shen et al.(2020)]{SPV2020}
 \textsc{Shen, C., Priebe, C.E.}\ and \textsc{Vogelstein, J.T.}\ (2020).
 From distance correlation to multiscale graph correlation.
 \textit{Journal of the American Statistical Association}, \textbf{115}(529), 280--291.

\bibitem[Shao(2003)]{Shao2003}
 \textsc{Shao, J.}\ (2003).
 \textit{Mathematical statistics}. Springer Texts in Statistics (2nd ed.). Springer-Verlag, New York.

\bibitem[Simon(1977)]{Simon1977}
 \textsc{Simon, G.}\ (1977).
 A nonparametric test of total independence based on Kendall's tau.
 \textit{Biometrika}, \textbf{64}(2), 277--282.

\bibitem[Sklar(1959)]{Sklar1959}
 \textsc{Sklar, M.}\ (1959).
 Fonctions de r\'epartition \`a $n$ dimensions et leurs marges.
 \textit{Publications de l'Institut de statistique de l'Universit\'e de Paris}, \textbf{8}, 229--231.

\bibitem[Spearman(1904)]{Spearman1904}
 \textsc{Spearman, C.}\ (1904).
 The proof and measurement of association between two things.
 \textit{The American Journal of Psychology}, \textbf{15}, 72--101.

\bibitem[Sz\'{e}kely and Rizzo(2009)]{SzR2009}
 \textsc{Sz\'{e}kely, G.J.}\ and \textsc{Rizzo, M.L.}\ (2009).
 Brownian distance covariance (with discussion).
 \textit{The Annals of Applied Statistics}, 1236--1368.

\bibitem[Sz\'{e}kely and Rizzo(2014)]{SzR2014}
 \textsc{Sz\'{e}kely, G.J.}\ and \textsc{Rizzo, M.L.}\ (2014).
 Partial distance correlation with methods for dissimilarities.
 \textit{The Annals of Statistics}, \textbf{42}(6), 2382--2412.

\bibitem[Sz\'{e}kely et al.(2007)Sz\'{e}kely Rizzo and Bakirov]{SzRB2007}
 \textsc{Sz\'{e}kely, G.J., Rizzo, M.L.}\ and \textsc{Bakirov, N.K.}\ (2007).
 Measuring and testing dependence by correlation of distances.
 \textit{The Annals of Statistics}, \textbf{35}(6), 2769--2794.

\bibitem[Turlach and Murray(2019)]{TM2019}
 \textsc{Turlach, B.A.}\ and \textsc{Murray, K.}\ (2019).
 Package `MonoPoly'.
 \href{https://CRAN.R-project.org/package=MonoPoly}{https://CRAN.R-project.org/package=MonoPoly}

\bibitem[van der Vaart(1998)]{vdV1998}
 \textsc{van der Vaart, A.W.}\ (1998).
 \textit{Asymptotic statistics} (volume 3). Cambridge University Press, Cambridge.

\bibitem[Vexler et al.(2019)Vexler, Afendras and Markatou]{VAM2019}
 \textsc{Vexler, A., Afendras, G.}\ and \textsc{Markatou, M.}\ (2019).
 Multi-panel Kendall plot in light of an roc curve analysis applied to measuring dependence.
 \textit{Statistics}, \textbf{53}, 417--439.

\bibitem[Wellner(2005)]{Wellner2005}
 \textsc{Wellner, J.A.}\ (2005).
 Empirical processes: Theory and Applications.
 \textit{Notes for a course given at Delft University of Technology}.

\bibitem[Zhu et al.(2017)Zhu, Xu, Li and Zhong]{ZXLZ2017}
 \textsc{Zhu, L., Xu, K., Li, R.}\ and \textsc{Zhong, W.}\ (2017).
 Projection correlation between two random vectors.
 \textit{Biometrika}, \textbf{104}(4), 829--843.

\end{thebibliography}
\end{document}